\newcommand{\be}{\begin{eqnarray} \begin{aligned}}
\newcommand{\ee}{\end{aligned} \end{eqnarray} }
\newcommand{\benn}{\begin{eqnarray*} \begin{aligned}}
\newcommand{\eenn}{\end{aligned} \end{eqnarray*} }
\def\l@section{\@tocline{1}{0pt}{1pc}{}{}}
\def\l@subsection{\@tocline{2}{0pt}{1pc}{4.6em}{}}
\def\l@subsubsection{\@tocline{3}{0pt}{1pc}{7.6em}{}}
\renewcommand{\tocsection}[3]{%
\indentlabel{\@ifnotempty{#2}{
	\makebox[1.2em][l]{%
	\ignorespaces #2.\hfill}
	}}#3\dotfill}
\renewcommand{\tocsubsection}[3]{%
  \indentlabel{\@ifnotempty{#2}{\hspace*{1.9em}\makebox[2em][l]{%
    \ignorespaces #2.}}}#3\dotfill}
\renewcommand{\tocsubsubsection}[3]{%
  \indentlabel{\@ifnotempty{#2}{\hspace*{3.9em}\makebox[2.8em][l]{%
    \ignorespaces #2.\hfill}}}#3\dotfill}
\newcommand{\bc}{\begin{center}}
\newcommand{\ec}{\end{center}}
\newcommand{\id}{\mathsf{I}}
\newcommand{\tr}{\mathop{\mathsf{tr}}\nolimits}
\newtheorem{theorem}{Theorem}[section]
\newtheorem{lemma}[theorem]{Lemma}
\newtheorem{claim}[theorem]{Claim}
\newtheorem{corollary}[theorem]{Corollary}
\newtheorem{example}[theorem]{Example}
\newcommand{\hil}{\mathcal{H}}
\def\Real{\mathbb{R}}
\def\Complex{\mathbb{C}}
\def\Natural{\mathbb{N}}
\def\id{\mathbb{I}}
\def\01{\{0,1\}}
\newcommand{\eps}{\varepsilon}
\newcommand{\ket}[1]{|#1\rangle}
\newcommand{\bra}[1]{\langle#1|}
\newcommand{\outp}[2]{|#1\rangle\langle#2|}
\newcommand{\proj}[1]{|#1\rangle\langle#1|}
\newcommand{\inp}[2]{\langle{#1}|{#2}\rangle} % inproduct, < | >
\newcommand{\End}{{\rm End}}
\newcommand{\Sym}{{\rm Sym}}
\newcommand{\gfrac}[2]{\Gamma\left(\frac{#1}{#2}\right)}
\newcommand{\gam}[1]{\Gamma\left({#1}\right)}
\newcommand{\ints}{\int_{S^{n-1}}}
\newcommand{\maxsym}{ {\rm MSym}}
\newcommand{\maxsymEll}{ {\rm MSym}\left( (\Real^n)^{\otimes \ell}\right)}
\newcommand{\symEll}{ {\rm Sym}\left( (\Complex^n)^{\otimes \ell}\right)}
\newcommand{\maxsymR}{ {\rm MSym}\left( (\Real^n)^{\otimes a}\right)}
\newcommand{\sphere}{S^{n-1}}
\newcommand{\ltwo}{L^2(S^{n-1})}
\newcommand{\sjm}{S_{jm}^\ell}
\newcommand{\sjmR}{S_{jm}^r}
\newcommand{\sjmt}{S_{j'm'}^\ell}
\newcommand{\x}{x}
\newenvironment{sdp}[2]{
\smallskip
\begin{center}
\begin{tabular}{ll}
#1 & #2\\
subject to
}
{
\end{tabular}
\end{center}
\smallskip
}
\newcommand{\U}{\mbox{U}}
\newcommand{\red}{a}
\newcommand{\redC}{ {a_{\Complex}} }
\newcommand{\redR}{ {a_{\Real}} }
\newcommand{\redP}{ {a'} }
\newcommand{\rem}{b}
\newcommand{\suba}{ {a} }
\begin{document}

\title[Convergence of SDP hierachies for polynomial optimization]{Convergence of SDP hierarchies for polynomial optimization on the hypersphere}

\author[Andrew C. Doherty]{Andrew C. Doherty}
\address{Andrew C. Doherty, Centre for Engineered Quantum Systems, School of Physics, University of Sydney, NSW 2006, Australia}
\email{andrew.doherty@sydney.edu.au}
\author[Stephanie Wehner]{Stephanie Wehner}
\address{Stephanie Wehner, Centre for Quantum Technologies, National University of Singapore, 3 Science Drive 2, 117543, Singapore}
\email{steph@locc.la}
\date{\today}
\thanks{We thank Sabine Burgdorf, Monique Laurent, Pablo Parrilo, Markus Schweighofer and Frank Vallentin for helpful comments on an earlier draft. This research was supported by the National Research Foundation and the Ministry of Education, Singapore, 
as well as the ARC Centre of Excellence in Engineered Quantum Systems (EQuS), project number CE110001013.
Part of this work was done while SW was at IQI, Caltech. SW thanks the University of Sydney for their hospitality.}
\begin{abstract}
We show how to bound the accuracy of a family of semi-definite programming relaxations for the problem of polynomial optimization on the hypersphere. Our method 
is inspired by a set of results from quantum information known as quantum de Finetti theorems. In particular, we prove a de Finetti theorem for a special class of real symmetric
matrices to establish the existence of approximate representing measures for moment matrix relaxations. 
\end{abstract}
\maketitle

\tableofcontents

\section{Introduction}
In this paper we will discuss the solution of polynomial optimisation problems of the following form: \emph{What is the maximum value $\nu$ of a polynomial $T(x) = T(x_1,\ldots,x_n)$ in $n$ real variables evaluated on the hypersphere $S^{n-1}$?}
More precisely, our aim is to solve the optimization problem
\begin{sdp}{maximize}{$T(x)$}
&$\|x\|_2 = 1$\ ,
\end{sdp}
with $\|x\|_2^2 = \sum x_j^2$. 
Optimizing polynomials over the sphere or other compact convex sets $K$ has a large number of applications ranging from 
material sciences~\cite{soare:metal}, quantum mechanics~\cite{andrew:separable}, numerical linear algebra, and signal processing 
to portfolio optimization (see~\cite{HLpaper} for an extensive list of applications and references).
Examples of such problems within theoretical computer science itself include finding the maximum stable set ($K$ being the simplex)
or maximum cut ($K$ being the Boolean cube) in graphs (see e.g.~\cite{deklerk} for a survey).

Not surprisingly, not all $K$ pose an equal challenge when it comes to optimization. For example, when $K$ is the simplex it has been known for 
some time that one can approximate the optimum of $T(x)$ over $K$ in polynomial time if $T$ is quadratic~\cite{bomzedeKlerk}, or more generally if $T$
is a homogeneous polynomial of a fixed degree~\cite{parrilo:simplex} (see also~\cite{deklerk}). 
The case of optimizing over the sphere, however, remains poorly understood in comparison~\cite{deklerk}. Whereas the problem is easy if $T$ has degree at most $2$~\cite{deklerk},
it is known that already for degree $3$ the problem of optimizing a polynomial over the sphere is NP-hard~\cite{nesterov}.
This leads to the question of whether polynomial time approximation algorithms do exist. 
For example, it was questioned in~\cite{parrilo:simplex} whether there exists a polynomial time approximation algorithm for homogeneous polynomials over the
sphere. Adhoc approximation algorithms are known for a variety of special cases, for example for optimizing biquadratic polynomials~\cite{ling:quartic}, 
or for optimizing cubic polynomials~\cite{zhang:cubic}.
Recently, a poly time approximation algorithm involving random sampling for optimization of homogeneous polynomials over the sphere was proposed 
in~\cite{HLpaper}. 

There is a general approach to solve \emph{any} such problem approximately by means of a hierarchy of semidefinite programs (SDP)~\cite{parrilo, lasserre} 
(see~\cite{monique:survey} for an excellent survey). 
%Such hierarchies have also been used in tests of separability~\cite{andrew:separable} and
%for finding Tsirelson-type bounds for arbitrary Bell inequalities~\cite{npa,dltw}. The latter is possible because any Bell inequality can be seen
%as a polynomial in non-commutative variables. 
Very roughly, each level of this SDP hierarchy is of increasing size, and thus more difficult to solve computationally. Yet, solving level $\ell$ of the hierarchy yields an approximation $\nu_\ell$ that improves for increasing $\ell$ and is generally known to converge to $\nu$.
However, results that bound the quality of the approximation for finite $\ell$ are relatively rare. For optimizing over the sphere, using results of Reznik~\cite{reznik},
Faybusovich~\cite{fayb} derived approximation guarantees for the SDP hierarchy 
when optimizing over the sphere, which lead to a meaningful approximation for similar values of
$\ell$ (see Section~\ref{sec:using} for a comparison). However, our approach yields rather new insights and an \emph{explicit} representing measure for every $\ell$.
Results are also known for the special
case that the number of variables $n$ is at least twice the degree of $T(x)$~\cite{nie:nbigger2d}. In the computer science literature, results are also known 
for the special case when the polynomial is of a certain form and has degree $4$~\cite{fernando:hierarchy}.

\subsection{Result}

Here, we prove convergence of a hierarchy of semidefinite programs for optimizing polynomials $T$ on the sphere. More precisely, we show that
for any homogeneous polynomial $T$ 
the solution $\nu_\ell$ of the SDP at level $\ell$
approximates the true optimimum $\nu$ as
\begin{align}\label{eq:expBound}
\nu_\ell - \eps(n,a,\ell)\nu \leq \nu \leq \nu_\ell\ ,
\end{align}
where the relative error is given by
\begin{align}\label{eq:smallErr}
\eps(n,a,\ell) =  \frac{4a^2\left(a + \frac{n}{2} + 1\right)}{2\ell + 1} 
\end{align}
where $a = \lceil d/2 \rceil$.
Since any polynomial of even degree at most $d$~\footnote{I.e., each monomial $t_m(x)$ in $T(x) = \sum_m t_m(x)$ has even degree.} 
on the sphere can be turned into a homogeneous polynomial of degree exactly $d$ %(see Section~\ref{sec:poly}) 
our result also applies
to such polynomials.
To our best knowledge, these are the first general results about convergence of the SDP hierarchies~\cite{parrilo,lasserre} when optimizing over the sphere that yield explicit representing measures. 
Note that in polynomial optimization the degree of the polynomial is generally regarded as a constant, 
and we care about the performance of the algorithm in terms of the number of variables $n$~\cite{deklerk}.  These bounds do not imply a polynomial time approximation algorithm for optimization over the sphere. However, we emphasize that in contrast to other results on the convergence of the hierarchy of semidefinite programming relaxations 
our results hold for \emph{any} homogeneous $T(x)$, and our approach yields an explicit approximating representing measure at any level $\ell$.

To prove our results we employ rather different techniques than have been used in the past, which can in principle be extended to derive optimization guarantees also for optimizing over other sets $K$ as well. In particular, our techniques are inspired by quantum information (see Section~\ref{sec:qinfo} for a short introduction). First, we use a slightly different way of representing polynomials as matrices which has very nice symmetry properties. Second, we prove 
a so-called de Finetti theorem for real valued matrices that are \emph{maximally symmetric}. This theorem forms the heart
of our convergence result.
In the language of quantum information theory,
a matrix 
$M \in \Sym((\Real^n)^{\otimes \ell})$
in the symmetric subspace of $(\Real^n)^{\otimes \ell}$ satisfies $\pi M = M$ for all $\pi \in S_\ell$ and is often called Bose symmetric. We term such matrices {\it maximally symmetric} if moreover $M$ is invariant under partial transposes of any subset of the $\ell$ systems (i.e., $M^{\Gamma_S} = M$ for all $S \subseteq \{1,\ldots,\ell\}$, where $\Gamma_S$ denotes
the partial transpose of the subsystems contained in $S$). 
In particular, our de Finetti theorem yields explicit probability distributions over the hypersphere with a moment matrix close to the given maximally symmetric matrix $M$. This approximate representing measure for $M$ allows us to place a lower bound on the maximum value of the polynomial $T$ at each level of the hierarchy. 

\begin{theorem}[Real valued de Finetti theorem]
	Let $n,a,\ell \in \Natural$ with $n \geq 3$ and $a < \ell$. Let $M \in \maxsymEll$ be a maximally symmetric matrix that is a state (i.e., $\tr(M) = 1$ and $M \geq 0$), and let
	$Q_M(x) = \bra{x}^{\otimes \ell} M \ket{x}^{\otimes \ell}$ be its $Q$-representation. Define
	\begin{align}
		\tilde{M}_a := \left(\frac{2^{2\ell} \omega_n}{\sqrt{\pi} \omega_{n-1}} \frac{\gam{\ell + 1}\gam{\ell + \frac{n}{2}}}{\gfrac{n-1}{2} \gam{2\ell + 1}}\right)
	\ints Q_M(x) \proj{x}^{\otimes a} dx \ .
	\end{align}
	Then the reduced matrix $M_a := \tr_{\downarrow a}(M)$ is approximated by the matrix $\tilde{M}_a$ as
	\begin{align}
		\left\|M_a - \tilde{M}_a\right\|_{F1} \leq \frac{4a^2\left(a + \frac{n}{2} - 1\right)}{2\ell + n}\ ,
	\end{align}
	with $\|A\|_{F1} := \sup_{\|F\|_{\infty} \leq 1} \tr(Z_F A)$, 
where the maximization is taken over homogeneous polynomials $F$ with polynomial matrix $Z_F$
and $\|F\|_{\infty} = \max_{x \in S^{n-1}} |F(x)|$ is the $p\rightarrow \infty$-norm for functions 
on the sphere. 
	Furthermore, $\tilde{M}_a$ is a state (i.e., $\tr(\tilde{M}_a) \geq 0$, $\tr(\tilde{M}_a) = 1$) and a moment
matrix.
\end{theorem}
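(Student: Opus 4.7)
The plan is to mirror the quantum de Finetti argument of Christandl--K\"onig--Mitchison--Renner, adapted to the real maximally symmetric setting via $O(n)$-equivariant spherical integration. The structural properties of $\tilde M_a$ are immediate from its definition: since $M \geq 0$, the $Q$-representation $Q_M$ is pointwise non-negative on $S^{n-1}$, so $\tilde M_a$ is a non-negative combination of the PSD rank-one matrices $|x\rangle\langle x|^{\otimes a}$ and is a moment matrix (of the positive measure $C\,Q_M(x)\,dx$ on $S^{n-1}$, with $C$ the scalar prefactor in the statement). Its trace reduces to $C\int_{S^{n-1}} Q_M(x)\,dx$, and $C$ is chosen precisely via the standard spherical moment $\int_{S^{n-1}} (x\cdot y)^{2\ell}\,dx = \tfrac{\sqrt{\pi}\,\omega_{n-1}}{\omega_n}\tfrac{\Gamma(\ell + 1/2)\Gamma((n-1)/2)}{\Gamma(\ell + n/2)}$ combined with $\tr M = 1$ so as to give $\tr \tilde M_a = 1$.

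For the error bound, the key is to introduce a dual ``$P$-representation'' $M = \int_{S^{n-1}} P_M(x)\,|x\rangle\langle x|^{\otimes \ell}\,dx$ for a unique polynomial $P_M$ of degree $\leq 2\ell$ on the sphere; such a $P_M$ exists because the map $P \mapsto \int P(x)|x\rangle\langle x|^{\otimes\ell}\,dx$ is a bijection between polynomials of degree $\leq 2\ell$ on $S^{n-1}$ and $\maxsymEll$. Partial tracing gives $M_a = \int P_M(x)\,|x\rangle\langle x|^{\otimes a}\,dx$, so
\[
M_a - \tilde M_a \;=\; \int_{S^{n-1}} \bigl(P_M(x) - C\,Q_M(x)\bigr)\,|x\rangle\langle x|^{\otimes a}\,dx.
\]
The map $P_M \mapsto Q_M$ is realized by the $O(n)$-equivariant integral operator $(T P)(y) := \int (x\cdot y)^{2\ell} P(x)\,dx$, which by the Funk--Hecke formula acts as multiplication by explicit eigenvalues $\mu_{2k}$ (ratios of Gamma functions) on each zonal-harmonic component of degree $2k$, $0 \leq k \leq \ell$. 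The normalization $C$ is further tuned so that $C\mu_0 = 1$; consequently $P_M - C\,Q_M$ vanishes on the constant harmonic and picks up the factor $(1 - \mu_0/\mu_{2k})$ on the degree-$2k$ component.

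To conclude, I would dualize against an arbitrary homogeneous $F$ with $\|F\|_\infty \leq 1$. The pairing $\tr(Z_F(M_a - \tilde M_a))$ reduces (after tracking the moment-integral normalization) to $\int F(x)(P_M(x) - C\,Q_M(x))\,dx$, and since $M_a$ and $\tilde M_a$ live on $(\Real^n)^{\otimes a}$, only zonal-harmonic components of $F$ of degree $\leq 2a$ contribute. Parseval on the sphere then bounds the pairing by $\|F\|_\infty \cdot \max_{1 \leq k \leq a} |1 - \mu_0/\mu_{2k}|$ times a harmonic-norm factor controlled by $Q_M$. I expect the main obstacle to lie precisely in obtaining the sharp uniform estimate
\[
\max_{1\leq k \leq a} \bigl|1 - \mu_0/\mu_{2k}\bigr| \;\leq\; \frac{4a^2(a + n/2 - 1)}{2\ell + n},
\]
since the ratio $\mu_0/\mu_{2k}$ is an explicit telescoping product of Gamma ratios and recovering the exact constants requires careful bookkeeping rather than a conceptual leap. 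A secondary technical point is justifying the bijection between polynomials on $S^{n-1}$ of degree $\leq 2\ell$ and $\maxsymEll$ that underlies the $P$-representation, which follows from standard spherical harmonic analysis but must be stated explicitly to legitimize the partial-trace manipulation.
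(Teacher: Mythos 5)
Your overall architecture --- realize the $P$-representation via the bijection between degree-$\leq 2\ell$ polynomials on $S^{n-1}$ and $\maxsymEll$, apply Funk--Hecke to see the $P\mapsto Q$ map as a diagonal multiplier on spherical harmonics, fix the normalization so the degree-$0$ component matches, and dualize against a homogeneous $F$ with $\|F\|_\infty\leq 1$ --- matches the paper's proof (its Lemmas~\ref{lem:hasPrepresentation}, \ref{lem:PvsQ}, \ref{lem:approxM}). The state/moment-matrix claims for $\tilde M_a$ are also handled exactly as you describe.

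The gap is in your final bounding step, where you invoke ``Parseval on the sphere \ldots times a harmonic-norm factor controlled by $Q_M$.'' This is the crux of the theorem and the Parseval route will not close: $\|F\|_\infty\leq 1$ gives no dimension-independent $L^2$ control on the coefficients $f_{jm}$ (the multiplicity $N(j,n)$ grows polynomially in $n$), and neither $\|P_M\|_2$ nor $\|Q_M\|_2$ is bounded for a general state $M$, so the ``harmonic-norm factor'' is not controlled. The paper's key move, which you are missing, is to rewrite the error as a multiplier on the \emph{$Q$-side} rather than the $P$-side: using $p_{jm}^M - Cq_{jm}^M = \bigl(\lambda(n,\ell,0)/\lambda(n,\ell,j)-1\bigr)Cq_{jm}^M$ it obtains
\[
\tr\bigl(Z_F(M_a - \tilde M_a)\bigr) \;=\; \sum_{j=0}^{2a}\Bigl(\tfrac{\lambda(n,\ell,0)}{\lambda(n,\ell,j)} - 1\Bigr)\,\tr\bigl(Z_F^j\,\tilde M_a\bigr),
\]
and then uses that $\tilde M_a$ is an honest moment matrix of the \emph{probability measure} $CQ_M(x)\,dx$ to get $|\tr(Z_F\tilde M_a)| = C\bigl|\ints Q_M F\,dx\bigr|\leq C\ints Q_M|F|\,dx \leq \|F\|_\infty$, i.e.\ $\|\tilde M_a\|_{F1}\leq 1$. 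This is $L^1$--$L^\infty$ duality against the positive density $Q_M$, not Parseval, and it is exactly where the positivity of $Q_M$ earns its keep. A second, smaller discrepancy: you propose $\max_{1\leq k\leq a}|1-\mu_0/\mu_{2k}|$, but the paper \emph{sums} the ratio-errors over the even degrees $j=2,\dots,2a$; each term is at most $4a(a+n/2-1)/(2\ell+n)$ and there are $a$ of them, which is how the stated $4a^2(a+n/2-1)/(2\ell+n)$ arises. Taking the max alone gives only $4a(\cdot)/(2\ell+n)$ and leaves you needing the unspecified ``harmonic-norm factor'' to contribute the missing factor of $a$, which again is unjustified.
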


At first glance, our de Finetti theorem for real maximally symmetric matrices may seem to contradict the impossibility result of Caves et al.~\cite{caves}
who provided an explicit counterexample indicating that not all real valued matrices in $(\Real^n)^{\otimes \ell}$ can obey a de Finetti theorem. Indeed,
this has led some authors to suggest that de Finetti theorems may not be very useful in the study of convergence of SDP hierarchies~\cite{fernando:hierarchy}. 
However, it turns out that this counterexample is \emph{not} maximally symmetric, and it becomes clear from our proof why this is a crucial condition. 

\subsection{Proof idea}

We give the main idea of the proof of the de Finetti theorem, and explain why existing approaches to proving de Finetti theorems are not useful in the study of maximally
symmetric matrices. The reader unfamiliar with de Finetti theorems may wish to skip ahead and return to this outline when tackling the proof. 
Existing de Finetti theorems~\cite{matthias:definetti,robert:compendious} which lead to an approximation in terms of convex
combinations of product states~\footnote{In contrast to exponential de Finetti theorems~\cite{renato:symmetry}, which however 
are not useful for our application} have made extensive use of representation theory. Most notably they have relied on the requirement that $M$ should act on a system on which the unitary group acts irreducibly, such as the symmetric subspace of $\ell$ copies of some Hilbert space.  Very roughly, the relevant symmetry group in our real case is the orthogonal group whose representation on the symmetric
subspace is no longer irreducible. As such, using irreducibility as in the proof of~\cite{matthias:definetti} is not possible for us.

Instead, we follow a rather different approach inspired by the $P$ and $Q$-repres\-entations in quantum optics, and the results of Laurent in optimization theory~\cite{monique}. 
Very roughly, we proceed along the following steps: First, we show that any maximally symmetric matrix $M$ is fully characterized by a function $P_M(x)$ on the hypersphere, known as the $P$-represention of $M$. Second, we relate the Fourier coefficients of the function $Q_M(x) = \bra{x}^{\otimes \ell} M \ket{x}^{\otimes \ell}$ (the $Q$-representation) 
to the Fourier coefficients of
the function $P_M(x)$. As our whole problem resides on the sphere, the relevant Fourier transform is the one over the sphere in terms of spherical harmonics. This relation
is derived by using a certain convolution theorem in harmonic analysis, known as the Funk-Hecke formula~\cite{mueller1966}. 
Comparing $P_M$ and $Q_M$ inspires us to form an approximation $\tilde{M}_a$ in terms of the $Q$-representation  $Q_M(x)$. Note that whereas $P_M(x)$ can be negative, $Q_M(x)$ is positive since $M \geq 0$.
Third, we show that the low order Fourier coefficients of $P_M(x)$ and $Q_M(x)$ are actually (up to normalization) almost the same. Finally, we show that when comparing the reduced state $M_a$ of $M$ with $\tilde{M}_a$ only the low order Fourier coefficients of $P_M$ and $Q_M$ are relevant -- but these have already been shown to be the almost the same in step 3.

\section{Basic concepts}

Our construction makes essential use of results from quantum information, representation theory, and spherical harmonic functions.  
In Section~\ref{sec:qinfo} we review the necessary ingredients from quantum information. In Section~\ref{sec:repTheory} we give 
some essential background from representation theory. We continue to summarize facts of spherical harmonics in Section~\ref{sec:spherical}. Finally, we define the notion of maximally symmetric matrices in Section~\ref{sec:maxSym}.
%We then proceed to rephrase
%our optimzation problem in terms of matrices (Section~\ref{sec:matrices}). In particular, we will revisit some elementary
%facts from representation theory (Section~\ref{sec:repTheory}) to write these matrices in a convenient form. 
%In turn, this allows us to write the resulting SDP in a very special form (Section~\ref{sec:sdpSym}). 
%This can lead to a somewhat larger SDP than one would
%normally use. However, this new SDP has appealing symmetry properties which we will then exploit to form the approximation. 

\subsection{Quantum information}\label{sec:qinfo}
\subsubsection{Notation}
Throughout, we use notation that is commonly used within quantum information which served as an inspiration for our proof. 
Consider a vector space 
$V = \Complex^n$ or $V = \Real^n$ of dimension $n$~\footnote{All spaces in this note are of finite dimension.}. 
We use $\ket{v} \in V$ to denote a vector in $V$, and $\bra{v}$ to denote its dual vector, the complex conjugate transpose of $\ket{v}$. Let $\End(V)$ denote the set of linear
operators on $V$. For any matrix $M \in \End(V)$ we will use $M^*$ to denote its complex conjugate and $M^\dagger = (M^*)^T$ to denote its complex conjugate transpose (sometimes 
denoted by $M^*$ in other fields). 
For any matrix $M \in \End(V)$, we use $\tr(M)$ to denote the \emph{trace} of $M$, that is, the sum of its diagonal entries.
The identity is denoted by $\id$.

\subsubsection{States, systems and tensor products}
A (quantum) \emph{state} is a matrix $M$ satisfying $M \geq 0$ and $\tr(M) = 1$.
%We will also follow a widespread convention of using $M^2$ to denote $M^2 = M^\dagger M$, and use $\id$ for the identity.
We also consider vectors spaces of the form $V \otimes V$, where $\otimes$ denotes the Kronecker product, also known as the \emph{tensor product}.
When considering the tensor product $V^{\otimes \ell} = V \otimes \ldots \otimes V$ of $\ell$ vector spaces, we will also refer to the individual spaces $V$ as
\emph{systems}. If $\ket{x} \in V$ and $\ket{y} \in V$ then the vector $\ket{x} \otimes \ket{v} \in V \otimes V$. If it is clear that from context that we are considering tensor products of vectors on different systems, we follow the common convention in quantum information
and employ the shorthand
\begin{align}
\ket{xy} := \ket{x} \otimes \ket{y}\ .
\end{align}
When amgiuity is possible, we also use $\ket{x,y} := \ket{xy}$.

\begin{example}
	Consider two $2$-dimensional vectors spaces $V_1 = V_2 = \Complex^2$, where $\{\ket{v_1}, \ket{v_2}\}$ 
	and $\{\ket{\hat{v}_1}, \ket{\hat{v}_2}\}$
	are orthonormal bases for $V_1$ and $V_2$ respectively. 
A basis for the space $V_1 \otimes V_2$ is
given by $\{\ket{v_1\hat{v}_1}, \ket{v_1\hat{v_2}},\ket{v_2\hat{v}_1},\ket{v_2\hat{v}_2}\}$ where for example
\begin{align}
	\ket{v_1 \hat{v}_1} = \ket{v_1} \otimes \ket{\hat{v}_1} =
(x_1 \hat{x}_1,x_1 \hat{x}_2,
x_2 \hat{x}_1,x_2 \hat{x}_2)^T\ ,
\end{align}
with 
$\ket{v_1} = (x_1,x_2)$ and $\ket{\hat{v}_1} = (\hat{x}_1,\hat{x}_2)$.
\end{example}

\subsubsection{Partial trace of matrices}
An important notion we will need later is the so-called partial trace operation. When considering polynomials as matrices we will see that this operation is analogous to
applying the Laplacian to a polynomial.
Consider the tensor product of spaces
$U \otimes W$. For a matrix $M \in \End(U\otimes W)$ we can define $\tr_{W}(M) \in \End(U)$ the  \emph{partial trace} of $M$ \emph{over} $W$ such that
\begin{align}
	\tr\left[(N\otimes I)M\right] = \tr [N \tr_W(M)] \quad \forall N\in \End(U) \ ,
\end{align}
We can write an explicit formula for the components of $\tr_{W}(M) $. Let $\{\ket{u}\}_u$ and $\{\ket{w}\}_w$ be orthonormal bases for $U$ and $W$ respectively.
We can express any matrix $M \in \End(U\otimes W)$ in terms of this basis as
\begin{align}
	M = 
\sum_{u,u',w,w'} m_{u,u',w,w'} \outp{u}{u'} \otimes \outp{w}{w'}\ ,
\end{align}
for some coefficients $m_{u,u',w,w'} \in \Complex$. 
The \emph{partial trace} of $M$ \emph{over} $W$ can be shown to be as follows
\begin{align}\label{def:partialTrace}
	\tr_{W}(M) &:= \sum_{u,u',w,w'} m_{u,u',w,w'} \outp{u}{u'} \otimes \tr\left(\outp{w}{w'}\right)\\
&= 
\sum_{u,u'} \left(\sum_{w} m_{u,u',w,w}\right) \outp{u}{u'}\ .
\end{align}
We also speak of \emph{tracing out} $W$ from $M$ to obtain $\tr_{W}(M) \in \End(U)$.
When $U = V^{\otimes \red}$, $W = V^{\otimes \rem}$, and $\ell = \red + \rem$ we also write the partial trace as
\begin{align}
	\tr_{\downarrow a}(M) := \tr_{\red+1,\ldots,\ell}(M) := \tr_{W}(M)\ ,
\end{align}
to emphasize the number of systems we trace out.
It is sometimes useful to note that the partial trace can also be written as
\begin{align}
	\tr_W(M) = \sum_{w} (\id \otimes \bra{w})M(\id \otimes \ket{w})\ ,
\end{align}
where the sum is taken over basis vectors $\ket{w}$ of an orthonormal basis on $W$.

Of particular interest to us will be the partial trace of Hermitian matrices $M$ such that $M \geq 0$ and $\tr(M) = 1$, i.e., states. 
%Such matrices are also known as quantum \emph{states}. 
When tracing out a number of systems of a state $M \in \End(V^{\otimes \ell})$
we also call $M_{\suba} := \tr_{\red+1,\ldots,\ell}(M)$ a \emph{reduced state} of $M$.
It is clear from the definition of the partial trace above that $\tr(M_\suba) = 1$. 
Similarly, note that $M_\suba \geq 0$ for any state $M$ since $\bra{v}M_\suba\ket{v} = \tr\left((\proj{v} \otimes \id^{\otimes \rem})M\right)$, 
where $\id$ is the identity on $V$ and $\rem = \ell - \red$.

\begin{example}
To get some intuition about the partial trace operation, consider the following two examples. First, consider a state $M \in \End(V^{\otimes 2})$ where $M = M_1 \otimes M_2$ and $M_1,M_2\in \End(V)$ being states. When tracing out the second system we simply obtain
$\tr_2(M) = M_1 \tr(M_2) = M_1$. Second, let us consider a state $M$ that cannot be written as a tensor product of state on the individual
systems. Consider $M = \proj{\Psi}$ for $\ket{\Psi} = (\ket{11} + \ket{22})/\sqrt{2}$ where $\{\ket{1},\ket{2}\}$ forms an orthonormal basis
for $V$ with dimension $\dim(V) = 2$. As a matrix,
\begin{align}
M &= \frac{1}{2}\left(\begin{array}{cccc} 1 & 0 & 0 & 1\\ 
0 & 0 & 0 & 0 \\
0 & 0 & 0 & 0 \\
1 & 0 & 0 & 1 \end{array} \right)\\
&= 
\frac{1}{2}\left(\proj{1} \otimes \proj{1} + \outp{1}{2} \otimes \outp{1}{2}
+ \outp{2}{1} \otimes \outp{2}{1} + \proj{2} \otimes \proj{2}\right)\ .
\end{align}
By the definition of the partial trace~\eqref{def:partialTrace} we have
\begin{align}
\tr_2\left(M\right) &=
\frac{1}{2}
\left(\proj{1} \otimes \tr\left(\proj{1}\right) + \outp{1}{2} \otimes \tr\left(\outp{1}{2}\right)\right.\\
&\qquad \left. + \outp{2}{1} \otimes \tr\left(\outp{2}{1}\right) + \proj{2} \otimes \tr\left(\proj{2}\right)\right)\nonumber\\
&=\frac{1}{2}\left(\proj{1} + \proj{2}\right)\\
&= \frac{\id}{2}\ .
\end{align}
\end{example}

\subsubsection{Matrices as vectors}\label{sec:vectorForm}

When considering the case of real valued variables, it will be useful to note that any matrix $M \in \End(V^{\otimes \ell})$ can be identified uniquely with a vector $\ket{M} \in V^{\otimes 2\ell}$. Let $\{\ket{i}\}_{i=1}^{\dim(V)}$ denote the standard 
orthonormal basis of $V$. Note that the set of matrices $\outp{i}{j}$ forms a basis for $\End(V)$ in which we can express $M$ as
\begin{align}
M = \sum_{ij} m_{ij} \outp{i}{j}\ ,
\end{align}
for some coefficients $m_{ij} \in \Real$ (or $m_{ij} \in \Complex$ for $V = \Complex^n$). Relative to our chosen basis, we can hence define an isomorphism between matrices and vectors given as
\begin{align}
\ket{M} := \sum_{ij} m_{ij} \ket{i}(\ket{j})^*\ .
\end{align}
Throughout, when concerned with matrices $M$ we will use $\ket{M}$ to denote its vectorized form. It will be useful to note that for this mapping we have for any $\ket{x} \in V$ 
\begin{align}
\bra{x} M \ket{x} = \bra{x}\otimes(\bra{x})^* \ket{M}\ .
\end{align}

\subsubsection{Norms and a distance measure}

Finally, we will use two particular norms on the set of operators $\End(V)$.
The first is the \emph{operator norm} which for Hermitian matrices $N$ is simply given by $\|N\|_\infty = \lambda_{\rm max}(|N|)$, where $\lambda_{\max}(|N|)$
denotes the largest eigenvalue of $|N|$. We will also need its dual norm, the L1-norm, which for any matrix $M \in \End(V)$ can be expressed as
\begin{align}
	\|M\|_1 = \sup_{\substack{\|N\|_\infty \leq 1}} \tr(NM)\ .
\end{align}
The L1-norm leads to a very useful distance measure on the subset of states in $\End(V)$. In particular, for two quantum states 
$M_1$ and $M_2$
the following quantity forms a distance
\begin{align}
	D(M_1,M_2) := \|M_1 - M_2\|_1\ ,\label{eq:traceDist}
\end{align}
also known as the \emph{trace distance}.
Immediately from its definition it is clear that the trace distance has a useful property.
Suppose two states $M_1,M_2 \in \End(V)$ are $\eps$-close in trace distance, i.e., $D(M_1,M_2) \leq \eps$.
We then have for any matrix $N$ that
\begin{align}\label{eq:nearlySameValue}
	\left|\tr\left(NM_1\right) - \tr\left(NM_2\right)\right| \leq \eps \|N\|_{\infty}\ .
\end{align}
%If this equation~\eqref{eq:nearlySameValue} holds, we will also say that $M_1$ is $\eps$-\emph{close} to $M_2$.

\subsection{Representation theory and the symmetric subspace}\label{sec:repTheory}

We will make extensive use of tensor product spaces and various elements of representation theory of the unitary and orthogonal groups (see e.g.~\cite{goodman:book} for an in-depth introduction).

\subsubsection{Unitary and symmetric groups}

The space $V^{\otimes \ell}$ with $V = \Complex^n$ carries a natural representation of
the group of permutations of $\ell$ objects $S_\ell$ and the unitary group $\U(n)$ with dimension $n = \dim(V)$. 
Consider the representation of $S_\ell$ on $V^{\otimes \ell}$ given by
\begin{align}
	\pi\left(\ket{x^{(1)}} \otimes \ldots \otimes \ket{x^{(\ell)}}\right) = \ket{x^{(\pi(1))}} \otimes \ldots \otimes \ket{x^{(\pi(\ell))}}\ , 
\end{align}
for $\pi \in S_\ell$. Similarly, consider a representation of $\U(n)$ given by the tensor products of the defining representation of $\U(n)$ on $V$ as
\begin{align}
	g\left(\ket{x^{(1)}} \otimes \ldots \otimes \ket{x^{(\ell)}}\right) = g\ket{x^{(1)}} \otimes \ldots \otimes g\ket{x^{(\ell)}}\ , 
\end{align}
for $g \in \U(n)$. Clearly the actions, of $S_\ell$ and $\U(n)$ commute so that this representation on $V^{\otimes \ell}$ will break up into irreducible representations of the direct product group $S_\ell\times \U(n)$. 

%and, using the double commutant theorem, Schur-Weyl duality~\cite{goodman:book}
%tells us that 
%$V^{\otimes \ell}$ decomposes under the action of the unitary group $\U(n)$ on $V$ and the group of permutations $S_\ell$ as
%\begin{align}\label{eq:schurWeyl}
%	V^{\otimes \ell} = \bigoplus_{\lambda \in \Par(\ell,n)} \sU_{\lambda} \otimes \sS_{\lambda}\ ,
%\end{align}
%where $\Par(\ell,n)$ denotes the set of integer partitions of $\ell$ into $n$ parts, and $\U(n)$ and $S_\ell$ act irreducibly on $\sU_{\lambda}$
%and $\sS_{\lambda}$ respectively. 

\subsubsection{Symmetric subspace}
%The partition $\lambda = (\ell,0,\ldots,0)$ thereby corresponds to the symmetric subspace, $\Sym(V^{\otimes \ell})$, which will be of special importance
%to us. 

The symmetric subspace is the subspace of $V^{\otimes \ell}$ corresponding to the trivial representation of $S_\ell$. The projector onto the symmetric subspace
is given by
\begin{align}
	\Pi_\ell = \frac{1}{\ell!} \sum_{\pi \in S_\ell} \pi\ .
\end{align}
As a result we may write the symmetric subspace as
\begin{align}
	\Sym(V^{\otimes \ell}) = \{M \in \End(V^{\otimes \ell})\mid \Pi_{\ell} M \Pi_{\ell} = M\}\ .
\end{align}
It is a standard result that the restriction of our representation $g^{\otimes \ell}$ of $\U(n)$ to the symmetric subspace is irreducible. Note that any vector of the form 
$\ket{z} := \ket{x}^{\otimes \ell}$ obeys $\pi\ket{z} = \ket{z}$ for all $\pi$ and thus
$\Pi_\ell\ket{z} = \ket{z}$. So $\ket{z}$ lies entirely the symmetric subspace $\Sym(V^{\otimes \ell})$. 

%A useful consequence of the deFinetti theorem is that by~\eqref{eq:nearlySameValue} we have that 
%for any matrices $N$ with $\|N\|_\infty \leq 1$ the difference between $\tr(NM_1)$ and $\tr(NM_2)$ is no more than $\eps$.
%We will also make use of the fact that since $\Pi \proj{1}^{\otimes L} \Pi = \proj{1}^{\otimes L}$ for any $L$ where $\Pi$
%is the projector on the symmetric subspace of $\Complex^{\otimes L}$, we have that $\proj{1}$ lies in the symmetric subspace for any $L$.

\subsubsection{Number state basis}
A convenient basis for the symmetric subspace is given by the so-called \emph{number state basis} $\{\ket{\textbf{i}}\}_{\textbf{i}}$ 
with
\begin{align}\label{eq:numberState}
	\ket{\textbf{i}} 
	:= \frac{1}{\sqrt{\ell!\textbf{i}!}} \sum_{\pi \in S_\ell} \pi\left(\ket{1}^{\otimes i_1} \otimes \ldots \otimes \ket{n}^{\otimes i_n}\right)\ ,
\end{align}
where $\textbf{i} := (i_1,\ldots,i_n) \mbox{ with } \sum_j i_j = \ell$, $\textbf{i}! = i_1!\ldots i_n!$, and $\{\ket{1},\ldots,\ket{n}\}$ is an orthonormal basis for $V$.
Note that the projector into the symmetric subspace can also be expressed in the number state basis as $\Pi = \sum_{\textbf{i}} \proj{\textbf{i}}$. 
Furthemore, note that the set of vectors
$\{\pi \ket{v_{\textbf{k}}}\mid \ket{v_{\textbf{k}}} := 
\ket{x_1}^{\otimes k_1}\otimes \ldots \otimes \ket{x_n}^{\otimes k_n} \mbox{ with } \pi \in S_\ell\}$ 
spans $(\Complex^n)^{\otimes \ell}$. 
Hence any matrix $M \in \End((\Complex^n)^{\otimes \ell})$ can be written in the form
\begin{align}
M = \sum_{\substack{\pi,\sigma \in S_\ell\\ \textbf{k},\textbf{m}}} 
f_{\pi,\sigma}^{\textbf{k},\textbf{m}}
\pi\ket{v_{\textbf{k}}}\bra{v_{\textbf{m}}} \sigma\ ,
\end{align}
for some coefficients $f_{\pi,\sigma}^{\textbf{k},\textbf{m}}$. 
It will be useful to understand what matrices written in this form look like 
when projected into the symmetric subspace. 
We can compute $\Pi_\ell M \Pi_\ell$ by computing the terms
\begin{align}
\label{eq:innerprod}
\bra{\textbf{i}} \pi \ket{v_{\textbf{k}}} \bra{v_{\textbf{m}}} \sigma\ket{\textbf{j}} &= 
\inp{\textbf{i}}{v_{\textbf{k}}} \inp{v_{\textbf{m}}}{\textbf{j}} =
c_{\textbf{i}} c_{\textbf{j}}\ \textbf{i}! \textbf{j}!\ \delta_{\textbf{i},\textbf{k}} \delta_{\textbf{j},\textbf{m}}\ ,
\end{align}
where we used the shorthand
\begin{align}\label{eq:numberShort}
	c_{\textbf{i}} := \frac{1}{\sqrt{\ell! \textbf{i}!}}\ .
\end{align}
It is also interesting to consider what happens to the matrix $\outp{\textbf{i}}{\textbf{j}}$ 
if we trace out one system from $(\Complex^n)^{\otimes \ell}$. 
We have 
\begin{align}
&\tr_{1}\left[\outp{\textbf{i}}{\textbf{j}}\right]
=c_{\textbf{i}} c_{\textbf{j}}\\
&\sum_{\pi_1,\pi_2 \in S_{\ell-1}}
\left(\sum_{m_1,m_2 \in \{1,\ldots,\ell\}} \tr_{1}\left[(\ell,m_1)\ket{1}^{\otimes i_1}\ldots\ket{n}^{\otimes i_n}
\bra{1}^{\otimes j_1} \ldots \bra{n}^{\otimes j_n} (\ell,m_2)\right]\right) \nonumber \\
&
=c_{\textbf{i}} c_{\textbf{j}} \sum_{\pi_1,\pi_2 \in S_{\ell-1}}
\sum_{t=1}^n i_t j_t 
\ket{1}^{\otimes i_1}\ldots \ket{t}^{\otimes i_t - 1}\ldots \ket{n}^{\otimes i_n}
\bra{1}^{\otimes j_1}\ldots \bra{t}^{\otimes j_t - 1}\ldots \bra{n}^{\otimes j_n}\\
&=\frac{1}{\ell} \sum_{t=1}^n \sqrt{i_t j_t} \outp{\textbf{i} - e_t}{\textbf{j}-e_t}\ ,
\label{eq:ptrace}
\end{align}
with $\textbf{i} - e_t  = (i_1,\ldots,i_t-1,\ldots,i_n)$, where the first equality stems from the fact that we can write any permutation over $\ell$ elements as a swap between the last and any of the others, followed by a permutaton over only $\ell-1$ elements.

%From~\eqref{eq:polyMatrix} it is easy to see that we can write $\Pi P_V^L \Pi$ directly as
%\begin{align}
	%P_V^{L,\rm sym} = 
	%\sum_{\substack{i_1,\ldots,i_n\in\{0,\ldots,D\}\\
	%j_1,\ldots,j_n\in\{0,\ldots,D\}\\
	%0 < \sum_m i_m = \sum_m j_m =: d' \leq D}} \alpha^{i_1,\ldots,i_n}_{j_1,\ldots,j_n} \outp{i_1;\ldots;i_n}{j_1;\ldots;j_n}\ .
%\end{align}

\subsubsection{Orthogonal group}

While the unitary group will be sufficient to deal with the case of polynomials on the complex projective space, in the case of real polynomials on the sphere we will also need to deal with representations of the orthogonal group. As before the space  $V^{\otimes \ell}$  carries a natural representation of the orthogonal group as follows
\begin{align}\label{eq:Orep}
	g\left(\ket{x^{(1)}} \otimes \ldots \ket{x^{(\ell)}}\right) = g\ket{x^{(1)}} \otimes \ldots \otimes g\ket{x^{(\ell)}}\ .
\end{align}
for all $g\in O(n)$. $O(n)$ is a subgroup of $U(n)$ and again this representation commutes with the representation of $S_\ell$ that permutes copies of $V$. Once again we will be able to restrict our attention to the symmetric subspace $\Sym(V^{\otimes \ell})$. However the symmetric subspace is no longer irreducible as a representation of $O(n)$ but rather decomposes into into a number of irreducibles  (see e.g.~\cite[Chapter V]{weyl:book} or~\cite[Chapters 17.3,19.5]{fulton:rep}).

%To deal with complex valued variables this is all we will need to understand about the symmetric subspace. In the real case, we will also be on the symmetric subspace of $V^{\otimes \ell}$, yet representations of the unitary group are insufficient. Instead, we will have to deal with only a subspace of matrices of $\Sym(V^{\otimes \ell})$. 
%
%To gain some intuition why representations of the unitary group are insufficient note that 
%as in the case of $U(n)$ it is clear that the space $V^{\otimes \ell}$ decomposes under the action of $S_\ell$ (see~\eqref{symGroup})
%and $O(n)$ when we represent $g \in O(n)$ as
%\begin{align}\label{eq:Orep}
%	g\left(\ket{x^{(1)}} \otimes \ldots \ket{x^{(\ell)}}\right) = g\ket{x^{(1)}} \otimes \ldots \otimes g\ket{x^{(\ell)}}\ .
%\end{align}
%As $O(n)$ forms a subgroup of $U(n)$ the space again decomposes according to the Schur-Weyl decomposition. However, the representations
%$U_{\lambda}$ are no longer irreducible when we consider the subgroup $O(n)$ alone. Instead, they decompose further into a number of irreducibles (see e.g.~\cite[Chapter V]{weyl:book} or~\cite[Chapters 17.3,19.5]{fulton:rep}). The representations of $O(n)$ bear a close relation to spherical harmonics and when dealing with real variables we will directly consider spherical harmonics alone.

\subsection{Polynomials}\label{sec:poly}

\subsubsection{Real variables}\label{sec:evenoddDegree}
We will first consider the case of real polynomials. For a set of real variables $x\in \Real^n$ a general monomial of order $d$ can be written as $x_1^{i_1}...x_n^{i_n}=x^{\bf i}$ with $|{\bf i}|=d$. A general polynomial degree at most $d$ can therefore be written as
\begin{align}
\label{eq:polydefnreal}
	T({\x}) = \sum_{
	|{\bf i} | \leq d} 
	\alpha_{\bf i}{\x^{{\bf i}}}\ ,
\end{align}
for some coefficients $\alpha_{\bf i}\in \Real$. We will largely be interested in evaluating these polynomials on the hypersphere 
where the variables $x_1,\ldots,x_n$ satisfy
 \begin{align}\label{eq:realRdef}
	 r(x)^2 :=\sum_{i=1}^n x_i^2=1\ .
\end{align}
Let us now first consider the case where all terms in $T(x)$ have even order, i.e., all monomials in $T(x)$ have even degree. 
We can then use the fact that we are evaluating polynomials on the sphere to write $T$ as a sum of monomials of degree \emph{exactly} $d$ since
for any ${\bf i}$ with $ |{\bf  i}|  =: d' \leq d$ we have
\begin{equation}
	\alpha_{\bf i}{ x^{i}} =\alpha_{\bf i}{ x^{i}} r(x)^{d-d'} \ ,
\end{equation}
which is a homogeneous polynomial of degree $d$ when $d$ is even. Note that this turns any such polynomial into a homogeneous polynomial of even degree.

Let us now consider homogeneous polynomials $T(x)$ of odd degree $d = 2a-1$. As we show in the appendix (Lemma~\ref{lem:turnIntoEven}), there exists an explicit constant $\gamma(a)$ such
that 
\begin{align}
\max_{x' \in S^n} T'(x')  = \gamma(a) \max_{x \in S^{n-1}} T(x)\ ,
\end{align}
where $T'(x')$ is a homogeneous polynomial of even degree. Solving the l.h.s. thus allow us to obtain the solution of the r.h.s.
Throughout, we will thus always assume that $T$ has been transformed into a homogeneous polynomial of even degree exactly $2a$.

We define the Laplacian in the usual way as follows
\begin{equation}
\Delta T(x)=\sum_i\frac{\partial^2}{\partial x_i^2} T(x)\,
\end{equation}
which maps a polynomial of degree $d$ to a polynomial of degree $d-2$. {\it Harmonic polynomials} are those for which $\Delta T(x)=0$.
Real polynomials carry a natural representation of the orthogonal group as follows
\begin{equation}
T(x)\rightarrow L_g[T](x) = T(g^{-1}x) \quad \forall g\in O(n).
\end{equation}
While this is not an irreducible representation, it is well known that when restricted to the harmonic polynomials it becomes irreducible~\cite{weyl:book,fulton:rep,takeuchi,mueller}. 

\subsubsection{Complex variables}
We will also consider polynomials $T(x)$ in complex variables $x = (x_1,\ldots,x_n)\in \Complex^n$. 
This is because our techniques originate from quantum information and turn out to be somewhat simpler for the case of complex variables.
In this case it is natural to consider polynomials in $x$ and $x^*$ with monomials of the form  $x^{*i_1}_1x^{*i_2}_2\ldots x^{*i_n}_n x_1^{j_1}x_2^{j_2}\ldots x_n^{j_n}=x^{*{\bf i}}x^{\bf j}$. When $|{\bf i}|=\redC$ and $|{\bf j}|=\redR$ this is a monomial of order $(\redC,\redR)$. 
A general polynomial will then be of the form $T(x,x^*)=\sum_{{\bf i,j}}\alpha^{\bf i}_{\bf j}x^{*\bf i}x^{\bf j}$. We will be interested in evaluating these polynomials on the complex projective space $\Complex P^{n-1}$ of points $x\in \Complex^n$ where $\beta x$ is identified with $x$ for all $\beta\neq 0 \in \Complex$. The monomial $x^{*{\bf i}}x^{\bf j}$ is a function on  $\Complex P^{n-1}$ only if 
$|{\bf i}|=|{\bf j}|=\red$ and we call this a monomial of {\it symmetric degree} $d=2\red$. 
Moreover for the maximization to make sense we need $T(x,x^*)$ to be real for all $x$ and we call such polynomials Hermitian. 
A polynomial $T$ of order $(\red,\red)$ is Hermitian if $T$ can be expanded in monomials of symmetric degree at most $d$ as follows
\begin{align}
\label{eq:polydefnc}
	T({\x},x^*) = \sum_{
	|{\bf i} |= |{\bf j}| \leq a} 
	\alpha^{\bf i}_{\bf j}{\x^{*{\bf i}}\x^{{\bf j}}}\ ,
\end{align}
for some coefficients $\alpha^{\bf i}_{\bf j}\in \Complex$ satisfying $(\alpha^{\bf i}_{\bf j})^*=\alpha^{\bf j}_{\bf i}$.

The polynomial function $T(x,x^*)$ is not invariant under $x\rightarrow cx$ and is therefore not yet a function on $\Complex P^{n-1}$. To resolve this issue we define a normalised vector $\hat{x}=x/\|x\|_2$ which is invariant under $x\rightarrow cx$ for positive real $c$. As described above we have already restricted attention to Hermitian polynomials of symmetric degree that are invariant when $c=\exp(i\theta)$ for any angle $\theta$. As a result we may define the following function on  $\Complex P^{n-1}$
\begin{equation}
T_{\Complex P^{n-1}}(x,x^*)=T(\hat{x},\hat{x}^*).
\end{equation}
When we refer to optimising a polynomial function over complex projective space we mean that we optimize $T_{\Complex P^{n-1}}(x,x^*)$, which is properly a polynomial function of $\hat{x}$ rather than $x$. However this distinction will usually be unimportant

As in the real case we can use the fact that we may embed $\Complex P^{n-1}$ in the set of normalised vectors $|x\rangle $ such that 
\begin{align}
r(x)^2 =|\langle x|x\rangle |^2=\sum_kx_k^*x_k=1\ . 
\end{align}
So we may write $T$ as a sum of a sum of monomials of symmetric degree \emph{exactly} $d$ since
for any ${\bf i}$ and ${\bf j}$ with $ |{\bf  i}| =| {\bf j}| =: \redP \leq \red$ we have
\begin{equation}
	\alpha^{\bf i}_{\bf j}{ x^{\bf *i}x^{\bf j}} =\alpha^{\bf i}_{\bf j}{ x^{\bf *i}x^{\bf j}}r(x)^{2(\red-\redP)}
	 \ ,
\end{equation}
and this is a sum of monomials of symmetric degree $2\red$.
Throughout, we will always assume that $T$ has been transformed in this fashion.

Another natural operation we can perform on a Hermitian polynomial is to act with the Laplacian
\begin{equation}
T(x,x^*)\rightarrow \Delta_C T(x,x^*)=\sum_k\frac{\partial}{\partial x_k^*}\frac{\partial}{\partial x_k}T(x,x^*)
\end{equation}
which maps a Hermitian polynomial of degree $2\red$ to a Hermitian polynomial of degree $2\red-2$. Notice that if we write $x=x_{\Real}+ix_{\Complex}$ in terms of real and imaginary parts then $T(x,x^*)$ is a polynomial in the $2n$ real variables $(x_{\Real},x_{\Complex})^T$ and the Hermitian harmonic polynomials of symmetric degree $2\red$ in $n$ complex variables become a subset of the harmonic polynomials of symmetric degree $d=2\red$ in $2n$ real variables. There is a natural representation of the unitary group $\U(n)\subset O(2n)$ on the Hermitian polynomials of symmetric degree $d$ as follows
\begin{equation}
T(x,x^*)\rightarrow L_g[T](x,x^*) = T(g^{-1}x,gx^*) \quad \forall g\in \U(n).
\end{equation}
As in the real case this is not an irreducible representation but becomes so when restricted to the harmonic Hermitian polynomials of symmetric degree $2\red$~\cite{takeuchi}.

\subsection{Spherical harmonics}\label{sec:spherical}

Our result for real valued variables depends on facts of spherical harmonic functions (see e.g.~\cite{mueller} for an in-depth introduction). It should be noted that these can be defined in a rather general way (see e.g.~\cite{takeuchi}), however here we only require spherical harmonic functions on the real sphere $\sphere \subset \Real^n$ and hence we restrict ourselves to this case.

\subsubsection{Functions on the sphere}

Let us first consider general functions on the sphere $S^{n-1}$. We will use $dx$ to denote the measure on $S^{n-1}$ that is normalized to unity, i.e.
\begin{align}
\ints dx = 1\ .
\end{align}
%and $d\omega(x)$ to denote the measure corresponding to surface elements normalized to 
%\begin{align}
%\ints d\omega(x) = \frac{2\pi^{n/2}}{\gfrac{n}{2}} =: \omega_n\ ,
%\end{align}
%i.e. the surface area $\omega_n$ of the sphere $S^{n-1}$ in $\Real^n$. Note that we may convert these measures as
%$dx = d\omega(x)/\omega_n$.
Let $\ltwo$ denote the space of square-integrable functions on $\sphere$ with respect to the measure 
$dx$~\footnote{A function is square-integrable with respect to $dx$ if $\|f\|_2 = (\ints |f(x)|^2 dx)^{1/2} < \infty$.}.
It is a Hilbert space under the natural inner product 
\begin{align}\label{eq:innerProductFunctions}
(f,g) := \ints f(x) g(x) dx\ .
\end{align}

%\subsubsection{Fourier transform}
It is well known (see e.g.~\cite{takeuchi,sharmonics}) that the space of functions can be decomposed as
\begin{align}\label{eq:l2dec}
\ltwo \simeq \bigoplus_{j=0}^{\infty} \hil_j(\sphere)\ ,
\end{align}
where $\hil_j(\sphere)$ is the space of all so-called~\emph{spherical harmonic functions} determined by the restriction of all homogenous harmonic polynomials of degree $j$ on $\Real^n$ to the sphere. The space $\hil_j(\sphere)$ inherits a natural representation of $O(n)$ from the corresponding representation on the harmonic polynomials, and this representation is still irreducible.
For each $\hil_j(\sphere)$ there exists a basis of functions given by the \emph{spherical harmonics} $s_{jm}$. There exist
\begin{align}
N(j,n) = {n + j - 1 \choose j} - {n + j - 3 \choose j - 2}
\end{align}
linearly independent spherical harmonics $s_{jm}$ of degree $j$. 
We choose this basis as in~\cite{mueller} to be orthogonal so that 
\begin{align}\label{eq:sphericalNorm}
	\ints s_{jm}(x) s_{j'm'}(x) dx = \delta_{jj'} \delta_{mm'} \frac{1}{\omega_n}\ ,
\end{align}
where 
\begin{align}\label{eq:omegaDef}
	\omega_n := \frac{2\pi^{n/2}}{\gfrac{n}{2}}
\end{align}
is the surface area of the sphere $S^{n-1}$ when integrating over the usual Cartesian measure in $\Real^n$. 
Furthermore, any spherical harmonic of degree $j$ can be expressed as a linear combination of the $s_{jm}$.

In terms of this basis, any (continuous) function $f \in \ltwo$ converges uniformly to
\begin{align}\label{eq:fourier}
f(x) = \sum_{j=0}^{\infty}\sum_{m=1}^{N(j,n)} f_{jm} s_{jm}(x)\ .
\end{align}
Note that this is the generalization of the Fourier transform on the circle to the sphere. The coefficients $f_{jm}$ are thus also referred to as \emph{Fourier coefficients}. 

Very roughly, the fact that $\ltwo$ admits a decomposition of the form~\eqref{eq:l2dec} can be understood as a consequence of the fact that by the separation of variables theorem for polynomials~\cite{tonthat1976}, any homogenous polynomial of degree $d$ can be written as a sum of terms each of which consists of a 
harmonic polynomial of degree $d-2k$ multiplied by a radical polynomial $r^{2k}(x)$. Restricting to the sphere sets the radical terms to unity and a general polynomial can be expanded as a sum of homogenous harmonic polynomials of degree up to $d$ alone. The fact that any function can be decomposed as~\eqref{eq:fourier} can now be understood as a consequence of Weierstrass' theorem stating that the algebra of all polynomials is dense in $\ltwo$. Intuitively, this means that any function can be approximated by a sum of polynomials of possibly high degree. Considering the origins of said composition, however, it is instructive to note that when $f$ in~\eqref{eq:fourier} is a polynomial of degree up to $d$, we will also only observe terms up to $j \leq d$. 

\subsubsection{Funk-Hecke formula}
We will need one important property of the spherical harmonic functions. Consider a spherical harmonic $f_j\in \hil_j(\sphere)$ and $|y\rangle \in S^{n-1}$, then
\begin{equation}
\label{eq:FunkHecke}
\int_{S^{n-1}} (\langle x|y\rangle)^{2\ell} f_j(x) dx = \frac{\omega_{n-1}}{\omega_n}\lambda (n,\ell,j) f_j(y)\ ,
\end{equation}
where $\lambda(n,\ell,j)$ is a normalization factor as defined in Lemma~\ref{lem:simplifyLambda}.
This equation is a special case of the Funk-Hecke formula~\cite{mueller} applied to the continuous function $(\langle x|y\rangle)^{2\ell} $. 
Note that the appearance of the factor $1/\omega_n$ arises from converting~\cite[Theorem 6]{mueller} to account for the fact that, in contrast to M{\"{u}}ller, 
we are using a normalised measure on the hypersphere.  
The Funk-Hecke formula can be understood as providing the normalization factor of Schur's lemma for the representation space $\hil_j(\sphere)$\footnote{Notice that this convolution integral commutes with the action of $O(n)$ on $\hil_j(\sphere)$.}.

\subsubsection{Norms}
The set of functions on the sphere admits a set of so-called $p$-norms. Here we will merely need the \emph{$\infty$-norm of a function} $F$ on the sphere
given by
\begin{align}
	\|F\|_\infty := \max_{x \in S^{n-1}} |F(x)|\ .
\end{align}

\subsection{Maximally symmetric matrices}\label{sec:maxSym}

As noted above we will often need to discuss matrices on the symmetric subspace $M \in \End(\Sym(V^{\otimes \ell}))$ for which $\Pi_\ell M \Pi_\ell =M$. It can be shown that all matrices on the symmetric subspace in fact satisfy $\Pi_\ell M=M$ and such matrices are sometimes termed {\it Bose symmetric} in the quantum information theory literature. However, in the case of real polynomials we will need to consider a set of matrices with an even stronger set of symmetries.

\subsubsection{Definition}
We will call $M$ a \emph{maximally symmetric matrix} if 
\begin{align}
\pi \ket{M} = \ket{M} \mbox{ for all }\pi \in S_{2\ell}\ , 
\end{align}
where $\ket{M}$ is the vector form of the matrix $M$ as defined in Section~\ref{sec:vectorForm} and $\pi$ is a permutation of the copies
of $V^{\otimes 2\ell}$ as disussed in Section~\ref{sec:repTheory}. Another way of phrasing this condition is that $\Pi_{2\ell}|M\rangle = |M\rangle$.
This condition also implies that $\pi M = M$ for all $\pi \in S_\ell$ and so $M$ is indeed a matrix on $\Sym(V^{\otimes \ell})$. 
For the reader from quantum information theory, we note that $M$ is not only Bose-symmetric but also  {\it PPT-symmmetric}, i.e., 
invariant under partial transposes of any subset of the $\ell$ systems. It is also helpful to note that any maximally symmetric matrix is real, since its definition also implies that $M = M^T$.
Throughout, we will use
\begin{align}
\maxsym((\Real^n)^{\otimes \ell}) := \left\{ M \in \End((\Real^n)^{\otimes \ell} \mid \Pi_{2\ell} \ket{M} = \ket{M} \right \}\ 
\end{align}
to denote the subspace of maximally symmetric matrices of $\End((\Real^n)^{\otimes \ell})$.
%In Section~\ref{sec:PrealRep} we show that any $M \in \maxsymEll$ has a very special form known as its $P$-representation.

%\subsubsection{MS1-norm and distance}
%We now define a norm on the space of maximally symmetric matrices, which 
%essentially corresponds to the L1-norm restricted to said space.
%More precisely, we define the \emph{maximally symmetric L1}-norm of $A \in \maxsym( (\Real^n)^{\otimes \ell})$ as\begin{align}\label{eq:MSnorm}
        %\|A\|_{MS1} := \sup_{\substack{F \in \maxsymEll\\\|F\|_{\infty} \leq 1}} \tr\left(FA\right)\ ,
%\end{align}where $\|\cdot \|_{\infty}$ denotes the usual operator norm. Note that since any maximally symmetric matrix is Hermitian, the 
%operator norm is simply the largest
%eigenvalue $\lambda_{\max}(F) = \|F\|_{\infty}$ of $F$. It is straightforward to verify that~\eqref{eq:MSnorm} does indeed constitute 
%a norm (Claim~\ref{claim:isNorm}). By the definition of the usual L1-norm we immediately see that $\|A\|_{MS1} \leq \|A\|_1$. Finally, 
%note that just like with any norm we can define a distance measure between $A,B \in \maxsymEll$ as
%\begin{align}
%D_{MS}(A,B) := \|A - B\|_{MS1}\ .
%\end{align}
%It is immediately clear from the definition of the MS1-norm that for any $A, B \in \maxsymEll$ with $D_{MS}(A,B) \leq \eps$, we have for any $F \in \maxsymEll$
%\begin{align}
%|\tr(FA) - \tr(FB)| \leq \eps\ \|F\|_{\infty}\ .
%\end{align}

\subsubsection{Spherical harmonic matrices $S_{jm}^{\ell}$}\label{app:prop}

For every spherical harmonic $s_{jm}$ we associate a maximally symmetric matrix, a \emph{spherical harmonic matrix}, defined as
\begin{align}\label{eq:defineSJM}
\sjm := \ints s_{jm}(x) \proj{x}^{\otimes \ell} dx\ .
\end{align}
Clearly we have $\sjm \in \maxsymEll$. 
We will need to establish several properties of the spherical harmonic matrices $\sjm$ as defined in~\eqref{eq:defineSJM}. The first lemma shows that they are in fact orthogonal with respect to the Hilbert-Schmidt inner product. It also establishes their normalization. In essence, our first lemma is an easy consequence of the Funk-Hecke formula given
in~\eqref{eq:FunkHecke}.

\begin{lemma}\label{lem:sjmTrace}
	Let $\ell, j, n \in \Natural$ with $n \geq 3$ and $j \leq 2\ell$, 
	then for matrices $\sjm$ and $\sjmt \in \maxsym( (\Real^n)^{\otimes \ell})$ as defined in~\eqref{eq:defineSJM}, we have
	\begin{align}
		\tr\left(\sjm \sjmt\right) = 
		\delta_{j j'} \delta_{m m'} \frac{w_{n-1}}{w_n^2}\ \lambda(n,\ell,j)\ ,
	\end{align}
	where the normalization factor $\lambda(n,\ell,j)$ is defined in Lemma~\ref{lem:simplifyLambda}.
\end{lemma}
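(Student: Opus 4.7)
The plan is to expand the trace as a double integral over the sphere, recognize the inner product $\langle x|y\rangle^{2\ell}$ as the kernel in the Funk–Hecke formula, and then apply orthonormality of the spherical harmonics.

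First I would write
\begin{align*}
\tr(\sjm \sjmt) = \ints \ints s_{jm}(x) s_{j'm'}(y)\, \tr\!\left(\proj{x}^{\otimes \ell} \proj{y}^{\otimes \ell}\right) dx\, dy,
\end{align*}
using bilinearity of the trace and Fubini (everything is continuous on a compact set). Since $x,y \in S^{n-1} \subset \Real^n$, the rank-one operators satisfy $\tr(\proj{x}^{\otimes \ell}\proj{y}^{\otimes \ell}) = \langle x|y\rangle^{2\ell}$. So the integrand becomes $s_{jm}(x) s_{j'm'}(y) \langle x|y\rangle^{2\ell}$.

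Next I would hold $y$ fixed and apply the Funk–Hecke formula~\eqref{eq:FunkHecke} to the inner integral in $x$. Since $s_{jm} \in \hil_j(S^{n-1})$ is a spherical harmonic of degree $j$, this yields
\begin{align*}
\ints \langle x|y\rangle^{2\ell} s_{jm}(x) dx = \frac{\omega_{n-1}}{\omega_n}\, \lambda(n,\ell,j)\, s_{jm}(y).
\end{align*}
The hypothesis $j \leq 2\ell$ ensures that the Funk–Hecke normalization $\lambda(n,\ell,j)$ (to be given by Lemma~\ref{lem:simplifyLambda}) is actually nonzero and that the formula applies to this kernel. Substituting back leaves
\begin{align*}
\tr(\sjm \sjmt) = \frac{\omega_{n-1}}{\omega_n}\, \lambda(n,\ell,j) \ints s_{jm}(y) s_{j'm'}(y)\, dy.
\end{align*}

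Finally I would invoke orthogonality of the spherical harmonic basis~\eqref{eq:sphericalNorm}, which gives $\ints s_{jm} s_{j'm'} dy = \delta_{jj'}\delta_{mm'}/\omega_n$. Combining with the previous display produces the claimed value $\delta_{jj'}\delta_{mm'}\, \omega_{n-1}\lambda(n,\ell,j)/\omega_n^2$. There is no real obstacle; the only subtle point is verifying the hypotheses under which the Funk–Hecke formula can be applied to $\langle x|y\rangle^{2\ell}$ with a spherical harmonic of degree $j \le 2\ell$, and that the normalization used in~\eqref{eq:FunkHecke} (with the unit-mass measure) matches the one implicit in~\eqref{eq:sphericalNorm}.
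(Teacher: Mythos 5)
Your proof is correct and follows essentially the same route as the paper's: expand the trace as a double integral, use $\tr(\proj{x}^{\otimes\ell}\proj{y}^{\otimes\ell})=\langle x|y\rangle^{2\ell}$, apply the Funk--Hecke formula, and finish with the orthogonality relation~\eqref{eq:sphericalNorm}. The only (immaterial) difference is which of the two integration variables you apply Funk--Hecke to first.
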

\begin{proof}
	Using the fact that the trace is linear and cyclic we have
	\begin{align}
		\tr\left(\sjm \sjmt\right) 
		&= \ints \ints s_{jm}(x) s_{j'm'}(z)\tr\left(\proj{x}^{\otimes \ell} \proj{z}^{\otimes \ell}\right) dx dz\\
		&=  \ints \ints s_{jm}(x) s_{j'm'}(z) (\inp{x}{z})^{2\ell} dx dz\ .\label{eq:traceExpand}
	\end{align}
	Since $t^{2\ell}$ is continuous for $-1 \leq t \leq 1$ and $s_{jm}$ is a spherical harmonic of degree $j$, 
	we may apply the Funk-Hecke formula~\cite[Theorem 6]{mueller} with $t = \inp{x}{z}$ (see also~\eqref{eq:FunkHecke}) to rewrite
	\begin{align}\label{eq:FunkHeckeApplied}
		\ints s_{j'm'}(z) (\inp{x}{z})^{2\ell} dz = \frac{\omega_{n-1}}{\omega_n} \lambda(n,\ell,j) s_{j'm'}(x)\ .
	\end{align}
	Combining~\eqref{eq:traceExpand} with~\eqref{eq:FunkHeckeApplied}, and using the orthogonality of the spherical harmonics then gives
	\begin{align}
		\tr\left(\sjm \sjmt\right) &= \frac{\omega_{n-1} \lambda(n,\ell,j)}{\omega_n} \ints s_{jm}(x) s_{j'm'}(x) dx\\
		&= \frac{\omega_{n-1} \lambda(n,\ell,j)}{\omega_n^2} \delta_{jj'} \delta_{mm'}\ ,
	\end{align}
	as promised.
\end{proof}

In addition to considering the trace of products of spherical harmonic matrices, we will also need to know the trace of any such matrix by itself. This is an easy consequence of 
the lemma above and properties of the spherical harmonics $s_{jm}(x)$.

\begin{lemma}\label{lem:sjmTraceless}
	Let $\ell, j, n \in \Natural$ and $n \geq 3$, then for all $j > 0$ the matrix $\sjm$ defined in~\eqref{eq:defineSJM} satisfies $\tr\left(\sjm\right) = 0$.
	Furthermore, $\tr\left(S_{0}^\ell\right) = 1/\sqrt{\omega_n}$.
\end{lemma}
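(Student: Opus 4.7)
The plan is to exploit the key fact that $\tr(\proj{x}^{\otimes\ell}) = \inp{x}{x}^\ell = 1$ for any $x\in S^{n-1}$, which reduces the trace of $\sjm$ to a plain integral of a spherical harmonic over the sphere. From there the statement follows from orthogonality of spherical harmonics (for the $j>0$ case) and from computing the value of the unique degree-$0$ harmonic (for the $j=0$ case).

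More concretely, I would first pull the trace inside the integral defining $\sjm$ in~\eqref{eq:defineSJM}, using linearity of the trace and the cyclic property of the outer-product $\proj{x}^{\otimes\ell}$ on the unit sphere, to obtain
\begin{align}
\tr(\sjm) \;=\; \ints s_{jm}(x)\,\tr\bigl(\proj{x}^{\otimes\ell}\bigr)\,dx \;=\; \ints s_{jm}(x)\,dx.
\end{align}
This reduces the problem entirely to evaluating $\ints s_{jm}(x)\,dx$.

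For the case $j>0$, I would invoke the orthogonality relation~\eqref{eq:sphericalNorm} together with the fact that the space $\hil_0(\sphere)$ consists of constant functions (since $N(0,n)=1$ and degree-$0$ homogeneous harmonic polynomials are just constants). Writing the integral as $\omega_n \cdot (1/\sqrt{\omega_n}) \cdot \ints s_{jm}(x) s_{00}(x)\,dx$, and applying~\eqref{eq:sphericalNorm} with $j'=0$, $m'=0$, gives $0$ whenever $j\neq 0$.

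For the case $j=0$, I would determine $s_0(x)$ explicitly as a constant $c$ via the normalization~\eqref{eq:sphericalNorm}: the condition $\ints s_0(x)^2\,dx = 1/\omega_n$ combined with the fact that $dx$ is normalized (so $\ints dx = 1$) forces $c^2 = 1/\omega_n$, hence $s_0(x) = 1/\sqrt{\omega_n}$. (Picking the positive sign is a convention.) Then $\tr(S_0^\ell) = \ints s_0(x)\,dx = 1/\sqrt{\omega_n}$, as claimed. There is no real obstacle here — the whole argument is essentially bookkeeping with the trace and the orthonormality of spherical harmonics — so the only care needed is to keep straight the distinction between the normalized measure $dx$ on $\sphere$ and the unnormalized surface measure that appears in the normalization constant $\omega_n$.
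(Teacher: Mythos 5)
Your proof is correct and follows essentially the same route as the paper: reduce $\tr(\sjm)$ to $\ints s_{jm}(x)\,dx$ via $\tr(\proj{x}^{\otimes\ell})=1$ on the sphere, then invoke orthogonality against the constant harmonic $s_0 = 1/\sqrt{\omega_n}$ for both parts. The only difference is that you spell out the step $\tr(\proj{x}^{\otimes\ell}) = (\inp{x}{x})^\ell = 1$ explicitly, which the paper leaves implicit.
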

\begin{proof}
	The first part follows immediately from the fact that $s_0(x)$ is the constant function and the orthogonality of the spherical harmonics
	\begin{align}\label{eq:genTrace}
		\tr\left(\sjm\right) = 
		\ints s_{jm}(x) dx \propto \ints s_{jm}(x) s_{0}(x) dx = 0\ .
	\end{align}
	The second part follows from the fact that $s_0(x) = c$ is given by a constant $c$, which is determined by the normalization condition of~\eqref{eq:sphericalNorm}
	\begin{align}
		\ints s_{0}(x)^2 dx = c^2 \ints dx = c^2 = \frac{1}{\omega_n}\ .
	\end{align}
	Hence, $s_0(x) = 1/\sqrt{\omega_n}$ which together with~\eqref{eq:genTrace} implies our claim.
\end{proof}

\section{Polynomials as matrices}

We follow the general approach of Parrilo~\cite{parrilo} and Laserre~\cite{lasserre} to express the class of polynomial optimization problems that we are interested in as a hierarchy of semidefinite programs (SDPs). 
To use this approach, we will need to transform the problem involving polynomials into an equivalent problem involving matrices. Positivity of the matrices is related to the existence of sum of squares decompositions for the polynomial.
The specific mapping that we use here appears to be somewhat different to the one that is usually used but we will see that it has significant advantages when it comes to analysing the convergence of the sequence of SDPs.

\subsection{Matrices for polynomials in complex variables}

As a warmup, we first discuss the case of Hermitian polynomial functions $T(\x,x^*) $ of \emph{complex} variables $x\in \Complex P^{n-1}$. This case already captures some of the essential ideas, but is slightly simpler than the case of real variables.

\subsubsection{Construction of matrices}
In order to map a polynomial to a matrix, consider a vector space $V=\Complex^n$ 
with orthonormal basis $\{\ket{j}, j \in {1,\ldots,n}\}$, where we identify each basis
element $\ket{j}$ with a particular variable $x_j$. Products of variables can then naturally be identified with the basis of $V^{\otimes \red}$ given by the tensor products
of the basis for $V$. An arbitrary a vector $\ket{x} \in V$ can thus be identified with the variables 
of the polynomial as follows
\begin{align}
		\ket{x} &:= \sum_{j=1}^n x_j \ket{j}\ .
\end{align}
For a Hermitian polynomial $T(x,x^*)$ of symmetric degree $d=2\red$ our goal will be to find a unique matrix $Z_T \in \End(V^{\otimes \red})$ such that for all $x$
\begin{align}\label{eq:poly1}
	(\bra{x}^{\otimes \red})Z_T(\ket{x}^{\otimes \red})= T(x,x^*)\ .
\end{align}
In what follows, we will also call $Z_T$ a \emph{polynomial matrix}, 
if $Z_T$ represents a polynomial $T(x,x^*)$ in the sense of~\eqref{eq:poly1}. A very strong motivation for this definition is that unitary transformations of the variables $|x\rangle \rightarrow U|x\rangle$ result in unitary transformations of the polynomial matrix $Z_T\rightarrow U^{\dagger \otimes \red}Z_TU^{\otimes \red}$ and this group action corresponds exactly to the original group action we defined on $T(x,x^*)$ such that $Z_{L_g[T]}=g^{\dagger \otimes \red}Z_Tg^{\otimes \red}$ for each $g\in \U(n)$. 

Let us now make a first attempt at finding $Z_T$.
One straightforward construction of a polynomial matrix is as follows
\begin{align}
\label{eq:PVdefn}
		Z_T' &:= 
	\sum_{
	|{\bf i} |= |{\bf j}| = \red} \alpha^{\bf i}_{\bf j} \ket{v_{\bf i}}	\bra{v_{\bf j}}
	\ ,
\end{align}
where $|v_{\bf i}\rangle := |1\rangle^{\otimes i_1}\otimes |2\rangle^{\otimes i_2}\ldots |n\rangle^{\otimes i_n}$. It is easily verified that~\eqref{eq:poly1} holds for $Z_T$ given above, which acts on the full space $V^{\otimes \red}$. The condition on $\alpha^{\bf i}_{\bf j} $ guarantees that $Z'_T$ is Hermitian and will therefore have real eigenvalues. We note that $Z'_T$ is sometimes also called the Gram matrix of $T$, and $Z_T$ the symmetric tensor according to $T$~\cite{burgdorf:comments}.

\begin{example}\label{ex:complexPoly}
To get some intuition about polynomial matrices, consider the following simple example. Let $T(x_1,x_2,x_3,x_4) = x_1^*x_2^* x_3 x_4 + 
x_3^* x_4^* x_1 x_2$. Note that $T$ is Hermitian and a sum of monomials of symmetric degree $d=4$. To see how we can write $T$ as a matrix consider $Z_T' \in \End(V^{\otimes 2})$ where $V = \Complex^4$
\begin{align}
	Z_T' = \outp{1,2}{3,4} + \outp{3,4}{1,2}\ .
\end{align}
Clearly $Z_T'$ is Hermitian and $(\bra{x}^{\otimes 2})Z_T'(\ket{x}^{\otimes 2})= T(x)$ as promised.
\end{example}

However, our first attempt at finding a mapping to matrices is somewhat unsatisfactory, both because the ordering of the factors of $V$ is arbitrary and because the polynomials are embedded into a vector space of dimension $n^{2\red}$ ($\dim(\End(V^{\otimes \red}))$) which is much larger than vector space of Hermitian polynomials which has dimension ${n+\red-1 \choose n-1}^2$. In the following we will define a polynomial matrix based on $Z_T$ that resolves these issues.
In particular, we will now see that the ``relevant'' part of $Z_T'$ can be chosen to be unique. 
%Recall from Section~\ref{sec:repTheory} that $V^{\otimes m}$ decomposes into the direct sum of much smaller spaces under the actions of $S_m$ and $\U(n)$.
%Of particular interest to us is the symmetric subspace. Note that for all $\pi \in S_m$ 
%\begin{align}
%	\pi \ket{x}^{\otimes m} =\ket{x}^{\otimes m}\ . 
%\end{align}
%As a result
%\begin{align}
%	\Pi_m \ket{x}^{\otimes m} = \ket{x}^{\otimes m}\ ,
%\end{align}
%where $\Pi$ is the projector on the symmetric subspace. 

Recall from Section~\ref{sec:repTheory} that $\ket{x}^{\otimes \red}$ has support only on the symmetric subspace. This implies that
\begin{align}
	\bra{x}^{\otimes \red}(\Pi_\red Z'_T \Pi_\red)\ket{x}^{\otimes \red} = \bra{x}^{\otimes \red}Z'_T \ket{x}^{\otimes \red}\ .
\end{align}
If we define the polynomial matrix 
\begin{equation}
\label{eq:Pdefn}
Z_T := \Pi_\red Z_T' \Pi_\red
\end{equation}
it still satisfies~\eqref{eq:poly1} and is clearly also Hermitian.
From~\eqref{eq:innerprod} it is easy to see that we can write $\Pi Z_T' \Pi$ in terms of the number state basis for the symmetric subspace as
\begin{align}
\label{eq:Pnumber}
	Z_T = 
	\sum_{
	|{\bf i} |= |{\bf j}| = \red}c_{\bf i}c_{\bf j} {\bf i!j!}\alpha^{\bf i}_{\bf j} \outp{{\bf i}}{{\bf j}} = \sum_{
	|{\bf i} |= |{\bf j}| = \red}\frac{\sqrt{ {\bf i!j!}}}{\red!}\alpha^{\bf i}_{\bf j} \outp{\bf i}{\bf j} \ .
\end{align}
Since the number state basis is orthonormal, it is clear that this mapping is now one-to-one from the Hermitian polynomials to the Hermitian matrices on the symmetric subspace of $V^{\otimes \red}$. 

\begin{example}Let us now revisit Example~\ref{ex:complexPoly} and show how $T$ can be expressed as a matrix $Z_T$ 
on the symmetric subspace. First of all, note that the element $x_1x_2$ corresponds to ${\bf i} = (1,1,0,0)$ and $x_3x_4$ to 
${\bf i}  = (0,0,1,1)$. In both cases ${\bf i}! = 1! 1!$. Hence from~\eqref{eq:Pnumber}
we have 
\begin{align}
	Z_T =\frac{1}{4!}\left(\outp{(1,1,0,0)}{(0,0,1,1)} + \outp{(0,0,1,1)}{(1,1,0,0)}\right)\ .
\end{align}
\end{example}

Note that this mapping is different from one that is often used in the literature~\cite{parrilo} in which $T$ is mapped to an operator on a vector space with its basis vectors identified directly with monomials, in our notation 
\begin{align}
	\tilde{Z}_T = 
	\sum_{
	|{\bf i} |= |{\bf j}| = \red}\alpha^{\bf i}_{\bf j} \outp{\bf i}{\bf j}\ .
\end{align} 
Note that this definition does not meet our requirement~\eqref{eq:poly1} which will prove to be essential to our analysis in terms of a de 
Finetti theorem.

\subsubsection{Operations on polynomials}
It is instructive to see that many natural operations on polynomials have an analogue in terms of matrix operations for our definition of polynomial matrices.

%\paragraph*{Extending to a higher degree polynomial}
We first consider the extension of the polynomial to a polynomial of higher degree. More specifically, 
consider the operation $T(x)\rightarrow T'(x)=T(x)r^2(x)$ which maps an order $d=2\red$ polynomial to order $d+2$. If we define the unit vector ${\bf e}_k$ in the obvious way then from~\eqref{eq:polydefnc} we have 
\begin{align}
\label{eq:polydefn}
	T'({\x}) = \sum_{
	|{\bf i} |= |{\bf j}| = \red} \sum_k
	\alpha^{\bf i}_{\bf j}{\x^{*i+e_k}\x^{j+e_k}}\ ,
\end{align}
and so we find 
\begin{align}
	Z_{T'}
	= \sum_{
	|{\bf i} |= |{\bf j}| = \red}\sum_k\frac{\sqrt{ {\bf (i+e_k)!(j+e_k)!}}}{d!}\alpha^{\bf i}_{\bf j} \outp{{\bf i+e_k}}{{\bf j+e_k}}=\Pi (Z_T\otimes I_n)\Pi \ .
\end{align}
The first expression comes from~\eqref{eq:Pnumber}, while the second is clear from the observation that $\langle x|I_n|x\rangle=\sum_k x_k^*x_k$ and the definition of $Z_T'$~\eqref{eq:PVdefn} and $Z_T$~\eqref{eq:Pdefn}.  

Clearly this formula extends such that if $T'(x)=T(x)r^{2(\ell-a)}(x)$ then we have $Z_{T'}=\Pi_\ell (Z_T\otimes I_n^{\otimes (\ell-a)})\Pi_\ell$. A useful identity is the following, consider a matrix $M_\ell \in  \End(\Sym(V^{\otimes \ell}))$ and its partial trace $M_a=\tr_{\downarrow a}(M_\ell)$ then we have
\begin{equation}
\label{eq:complexpartial}
\tr(M_\ell Z_{T'})=\tr(M_\ell (Z_T\otimes I_n^{\otimes (\ell-a)}))=\tr(M_aZ_T).
\end{equation}
The first equality comes because $\Pi_\ell M_\ell\Pi_\ell=M_\ell$ and the cyclic property of the trace, while the second is a result of the definition of partial trace.

%\paragraph*{Reducing the degree of the polynomial}
Another natural operation we can perform on a polynomial is to act with the Laplacian
which maps a Hermitian polynomial of degree $2\red$ to a Hermitian polynomial of degree $2\red-2$. We find
\begin{align}
\label{eq:lagrangian}
\Delta T({\x}) &= \sum_{
	|{\bf i} |= |{\bf j}| = a} \sum_k 
	i_kj_k\alpha^{\bf i}_{\bf j}{\x^{* {\bf i-e_k}}\x^{{\bf j-e_k}}}\\
&= \sum_{
	|{\bf i} |= |{\bf j}| = a-1}\left( \sum_k 
	(i_k+1)(j_k+1)\alpha^{\bf i+e_k}_{\bf j+e_k}\right){\x^{*{\bf i}}\x^{{\bf j}}}\ .
\end{align}
On the other hand we can take the partial trace of $T$ using \eqref{eq:ptrace} to find
\begin{align}
	{\tr}_1 (Z_T) 
	&= \frac{1}{a}\sum_t \sum_{
	|{\bf i} |= |{\bf j}| = a}\sqrt{i_tj_t}\frac{\sqrt{ {\bf i!j!}}}{a!}\alpha^{\bf i}_{\bf j} \outp{\bf i-e_t}{\bf j-e_t} \\
	&= \frac{1}{a^2} \sum_{
	|{\bf i} |= |{\bf j}| = a-1}\left(\sum_t (i_t+1)(j_t+1)\alpha^{\bf i+e_t}_{\bf j+e_t} \right)\frac{\sqrt{ {\bf i!j!}}}{(a-1)!}\outp{\bf i}{\bf j} \ .
\end{align}
and by comparison with~\eqref{eq:Pnumber} we see that $a^2{\tr}_1(Z_T) $ is the matrix that represents the polynomial $\Delta T(x)$, and so up to a scale factor we can interpret the partial trace as the Laplacian.

\subsubsection{Sum of squares decompositions}
The main purpose of mapping polynomials to matrices in this fashion is to establish the existence or otherwise of a sum of squares decomposition in which a polynomial of symmetric degree $d$ is written as a sum over squares of degree $d/2$ polynomials.

Just as we established a natural mapping of monomials of symmetric degree $d$ to matrices on $\End(\Sym(V^{\otimes d/2}))$  it is possible to establish a mapping of monomials $x^{\bf i}$ onto vectors in $\Sym(V^{\otimes d/2})$. Our goal will be to find a \emph{unique} mapping from complex polynomials of degree $(0,a)$
\begin{align}
\label{eq:polydefnsos}
	T({\x}) = \sum_{
	|{\bf i} |= a} 
	\alpha_{{\bf i}}{\x^{i}}\ ,
\end{align}
to a vector, where we want that
\begin{align}
\label{eq:polycorrespsos}
T(x) = \langle Z_T\ket{x}^{\otimes a} \ .
\end{align}
Proceeding as before we first
consider the following trial vector
\begin{align}
\label{eq:pVdefnsos}
	\ket{Z_{T}'} &:= 
	\sum_{
	|{\bf i} |=a} \alpha_{\bf i}	\ket{v_{\bf i}}
	\ ,
\end{align}
where as before $|v_{\bf i}\rangle = |1\rangle^{\otimes i_1}\otimes |2\rangle^{\otimes i_2}\ldots |n\rangle^{\otimes i_n}$. 
Using the same argument as before $|x\rangle^{\otimes a}$ lives on the symmetric subspace $\Sym(V^{\otimes a})$ and we project \emph{the vector} $\bra{Z_{T}'}$ onto 
that subspace to define
\begin{align}
\label{eq:pdefnsos}
		\langle{Z_T}| &:= \langle{Z'_T}| \Pi_a=
	\sum_{
	|{\bf i} |=a} \alpha_{\bf i}	c_{\bf i}{\bf i!} \bra{{\bf i}} = \sum_{
	|{\bf i} |=a} \sqrt{\frac{{\bf i!}}{a!}}\alpha_{\bf i}\bra{{\bf i}}
	\ ,
\end{align}
where the second expression uses $\langle {\bf i}|v_{\bf k}\rangle=c_{\bf i}{\bf i!}\delta_{{\bf i,k}}$. Clearly we can also associate the complex conjugated polynomial $T^*(x,x^*)$ with the vector $|Z_T\rangle$.
Again the orthogonality of the number state basis implies that this mapping is unique. As a result we may map any vector $|Z\rangle\in \Sym(V^{\otimes a})$ uniquely to a polynomial in $T_Z$ of the form~\eqref{eq:polydefnsos}. 

Now suppose that polynomial $T(x,x^*)$ is such that $T'(x,x^*)=T(x,x^*)r^{2k}$ maps to $Z_{T'}\geq 0$. We may write
\begin{equation}
Z_{T'} = \sum_i \lambda_i |Z_i\rangle \langle Z_i|  
\end{equation}
where each eigenvalue $\lambda_i\geq 0$ and the eigenvectors $|Z_i\rangle \in \Sym(V^{\otimes ( a+k)})$. As a result there exist $T_i(x)$ such that
\begin{equation}
T'(x,x^*)=T(x,x^*)r^{2k} = \sum_i\lambda_i|T_i(x)|^2
\end{equation}
the right hand side is known as a {\it sum of squares} decomposition for $T'(x,x^*)$. The existence of this sum of square decomposition implies that $T(x,x^*)\geq 0$ for all $x$. The set of polynomials $T(x,x^*)$ of symmetric order $d=2\red$ such that $T'(x,x^*)=T(x,x^*)r^{2(\ell-a)}$ has a sum of square decomposition will be labelled $\mathcal{S}_C(d,\ell)$.

\subsection{Matrices for polynomials in real variables}
\label{sec:realpoly}

Let us now consider the case of real variables. Our goal as before will be to find a \emph{unique} mapping from real polynomials $T(x)$ of even degree $d=2\red$
to matrices $Z_T$ such that $\bra{x}^{\otimes a}Z_T \ket{x}^{\otimes a} = T(x)$~\footnote{Recall from Section~\ref{sec:evenoddDegree} 
that the optimization of any homogeneous polynomial can be
performed by optimization of a homogeneous polynomial of even degree.}. Naturally, we could proceed just as in the complex case. However, whereas our mapping was indeed unique for the case of Hermitian polynomials, it no longer is for the case of real valued variables. To see this, is it useful to think of the matrix $Z_T$ as a vector, where we want that
\begin{align}
\label{eq:polycorresp}
T(x) = \bra{x}^{\otimes d} \ket{Z_T}\ .
\end{align}
Note that when proceeding as in the complex case above $\ket{Z_T}$ is of the form
\begin{align}
\ket{Z_T} = \sum_{{\bf i}{\bf j}} \beta^{\bf i}_{\bf j} \underbrace{\ket{{\bf i}}}_{x_{i_1}^* \ldots x_{i_n}^*}
\underbrace{\ket{{\bf j}}}_{x_{i_1}\ldots x_{i_n}}\ ,
\end{align}
for some coefficients $\beta^{\bf i}_{\bf j}$. That is, the first $V^{\otimes a}$ systems correspond to the conjugate of variables and the second $V^{\otimes a}$ to non-conjugated ones.
Clearly, in the real case there is no such separation between conjugated and non-conjugated variables and there are many ways to subdivide them into two components to form a matrix. 
Hence, the mapping is no longer unique.

\subsubsection{Construction of matrices}
However, it becomes clear when looking at~\eqref{eq:polycorresp} that our problem now has an additional symmetry that again allows us to restrict to a certain subspace on which polynomials are represented uniquely as matrices. In analogy to our discussion of complex polynomials of order $(0,a)$ in the previous section we can uniquely map our real polynomials to vectors in $\Sym(V^{\otimes d})$ as follows  \begin{align}
\label{eq:pdefnsosConstruct}
\ket{Z_T} &:= 
	 \sum_{
	|{\bf i} |=d} \sqrt{\frac{{\bf i!}}{d!}}\alpha_{\bf i}\ket{{\bf i}}
	\ .
\end{align} 
We may map this uniquely to a matrix acting on $\Sym(V^{\otimes d})$ in the usual way but the corresponding matrix $Z_T$ lives in the \emph{maximally} symmetric subspace, i.e., 
$Z_T \in \maxsymR$, as opposed to the merely symmetric subspace in the complex case. As we noted earlier all matrices in $\maxsymR$ are symmetric and therefore have 
real eigenvalues. This fact is important later for sum of squares decompositions. In analogy to the complex case orthogonal transformations of the variables $|x\rangle \rightarrow O|x\rangle$ result in orthogonal transformations of the polynomial matrix $Z_T\rightarrow O^{T \otimes \red}Z_TO^{\otimes \red}$ and this group action corresponds exactly to the original group action we defined on $T(x)$ such that $Z_{L_g[T]}=g^{T \otimes \red}Z_Tg^{\otimes \red}$ for each $g\in O(n)$.

%\andrew{i am worried about the first equation here, how do we known there are no weighting factors inside the sum? if there are i don't think any other part of our analysis will change but we should get it right}. For completeness, we explicitly consider what the vector $\ket{Z_T}$ looks like as a matrix.
%To do this we write the number states on the symmetric subspace of $V^{\otimes 2\red}$ as follows
%\begin{equation}
%|{\bf i}\rangle =\frac{1}{\sqrt{|S_{\bf i}|}}\sum_{{\bf j}\in S_{\bf i}} \ket{{\bf j}}\otimes \ket{{\bf i-j}}
%\end{equation}
%where the set $S_{\bf i}$ is defined as $S_{\bf i}=\{{\bf j}\ |\ {\bf j}\geq 0,\ |{\bf j}|=a,\  {\bf i-j}\geq 0\}$
%with $|S_{\textbf{i}}| =  {2\red \choose \red}/\textbf{i}!$.
% Steph: since we choose $r$ elements out of $2r$ ones, but choosing from $i_1$ is all equivalent etc. do you agree?
%It is clear that it must have this form since the different elements given by ${\bf i}$ must be distributed between the two subsystems in a totally symmetric fashion. The normalisation factor guarantees that the right hand side is normalised, and follows since each term is orthogonal and by counting the number of elements of $S_{\bf i}$.
%Now we are able to map our polynomial to a matrix as follows
%\begin{equation}
%P= \frac{1}{d!}\sum_{|{\bf i}|=d}\sqrt{\frac{{\bf i!}}{|S|_{\bf i}}}\alpha_{\bf i} \sum_{{\bf j}\in S_{\bf i}} \ket{{\bf j}} \bra{{\bf i-j}} \ .
%\end{equation}
%This matrix is in fact symmetric (for a real polynomial) since the sum over $S_{\bf i}$ involves all possible pairs ${\bf j},{\bf i-j}$ twice.

\subsubsection{Operations on polynomials}
It is again instructive to consider how natural operations on polynomials map to operations on matrices, or their vectorized forms. 

Firstly we will define an operation on the vector $\ket{Z_T}$ corresponding to $T(x)$ to produce the matrix corresponding to $T'(x)=T(x)r^2(x)$. In order to do this we define the vector $|r^2\rangle = \sum\ket{ii}$ which corresponds to $r^2(x)=\langle x |^{\otimes 2}\ket{r^2}$. Then following the mappings discussed above we define $\ket{Z'_{T'}}=\ket{Z_T}\otimes \ket{r^2}$ for which it is clear that $\langle x|^{\otimes (d+2)}\ket{Z'_{T'}}$ and so as before a projection onto the symmetric subspace preserves this property and we define $\ket{Z_{T'}}=\Pi_{d}(\ket{Z_T}\otimes \ket{\Psi})$. Notice that the matrix corresponding to $\ket{r^2}$ is $\id_n$ and so we could also use the notation $|\id_d\rangle$ instead. However in contrast to the complex case $Z_{T'}\neq \Pi_a (Z_T\otimes \id_n)\Pi_a$ as is clear since the resulting matrix is not maximally symmetric.

As in the complex case this formula extends such that if $T'(x)=T(x)r^{2(\ell-a)}(x)$ then we have $\ket{Z_{T'}}=\Pi_{2\ell} (\ket{Z_T}\otimes \ket{\id_n}^{\otimes (\ell-a)})$. A useful identity is the following, consider a matrix $M_\ell \in  \maxsymEll$ and its partial trace $M_a=\tr_{\downarrow a}(M_\ell)$ then we have
\begin{eqnarray}
\label{eq:realpartial}
\tr(M_\ell Z_{T'})&=&\langle M_\ell | Z_{T'}\rangle= \bra{M_\ell}(\ket{Z_T}\otimes \ket{\id_n}^{\otimes (\ell-a)})=\tr(M_\ell (Z_T\otimes \id_n^{\otimes (\ell-a)})) \nonumber \\ &=&\tr(M_aZ_T).
\end{eqnarray}
The first equality is the definition of these matrices in terms of vectors. The second identity holds since $\ket{M_\ell} \in \Sym(V^{\otimes 2\ell})$. The third equality converts the vector inner product back into an inner product of the corresponding matrices, and the final equality is the definition of the partial trace. 

We can also develop operations that correspond to the Laplacian acting on the $Z_T$. In order to do this, let us define the following operation from $V^{\otimes d}$ to $V^{\otimes d-2}$
\begin{equation}
\theta_{(1,2)}\ket{x^{(1)}}\ket{x^{(2)}}\ket{x^{(3)}}\ldots \ket{x^{(d)}}= \inp{x^{(1)}}{x^{(2)}} \ket{x^{(3)}}\ldots \ket{x^{(d)}}.
\end{equation}
We can then use permutations to define $\theta_{(i,j)}$ for all $(i,j)$ in the obvious way and then define the following operation which commutes with all permutations
\begin{equation}
\theta_1=\sum_{i,j\leq i} \theta_{(i,j)}\ .
\end{equation}
It should be noted that this operation has a connection with representations of the orthogonal group~\cite{weyl:book}, but we will not
need this here.  The effect of $\theta_1$ on our basis for $V^{\otimes d}$ is straightforward to compute
\begin{equation}
\theta_1 |v_{\bf i}\rangle = \sum_k i_k(i_k-1) |v_{{\bf i}-2{\bf e}_k}\rangle \ .
\end{equation}
As a result we find the following
\begin{equation}
\theta_1 |Z_T' \rangle =\sum_{|{\bf i}|=d}  \sum_k i_k(i_k-1)\alpha_{{\bf i}}|v_{{\bf i}-2{\bf e}_k}\rangle
\end{equation}
and this is clearly the vector $|\Delta Z_T\rangle$ corresponding to $\Delta T(x)$ on $V^{\otimes d}$. Since $\theta_1$ and $\Pi_{d}$ commute we find that
\begin{equation}\label{eq:deltaTheta}
\theta_1|Z_T\rangle = |\Delta Z_T\rangle \ .
\end{equation}

We can now understand the partial trace on $Z_T$ as follows. For any matrix $M \in \End( (\Complex^n)^{\otimes \red})$
we have
\begin{align}
	\tr_1(M) = \sum_{i=1}^n (\id_{\red-1} \otimes \bra{i}) M (\id_{\red-1} \otimes \ket{i})\ .
\end{align}
Associating $M$ with a vector $\ket{M} = \sum_{ij} M_{ij} \ket{i}\ket{j} \in (\Complex^n)^{\otimes 2\red}$ we see that the partial trace is equivalent to computing
\begin{align}
	\tr_1(M) = \sum_{i=1}^{n} (\id \otimes \bra{i})(\id \otimes \bra{i}) \ket{M} = \theta_{(\red,2\red)}\ket{M}\ .
\end{align}
Note that when applied to a number state which is permutation invariant
\begin{align}
	d(d-1)\theta_{(\red,2\red)}\ket{\textbf{i}} = \theta_1 \ket{\textbf{i}}\ .
\end{align}
Hence, for any $\ket{M}$ of the form $\ket{M} = \sum_{\textbf{i}} \alpha_{\textbf{i}} \ket{\textbf{i}}$ we have $d(d-1)\tr_1(M) =  \theta_1 \ket{M}$ and thus
\begin{align}
	 Z_{\Delta T}=d(d-1)\tr_1(Z_T) \ .
\end{align}
so that partial trace once again corresponds to the Laplacian. 

\subsubsection{Sum of squares decomposition}
\label{sec:sosreal}

In this section we wish to develop a semidefinite programming method to find sum of squares decompositions for $T(x)$.
In the previous section we established a unique mapping of a real homogenous polynomial of order $d$ to a unique matrix $Z_T\in \maxsymR$. However $\maxsymR$ is not closed under multiplication and so some eigenvalues of $Z_T$ could be negative. Yet, there could be some symmetric (but not maximally symmetric) matrix $\bar{Z}_T\in\End(\Sym(V^{\otimes a}))$ such that $\langle x|^{\otimes a}\bar{Z}_T|x\rangle^{\otimes a}=T(x)$ and $\bar{Z}_T\geq 0$. This would imply the existence of a sum of squares decomposition for $T$ as we will see below. The required property is that $\bar{Z}_T=Z_T+\bar{Z}$ for some $|\bar{Z}\rangle$ such that $\Pi_{d}\ket{\bar{Z}}=0$ so that $\langle x|^{\otimes d}|\bar{Z}\rangle =0$ and we have $\langle x|^{\otimes d}|\bar{Z}_T\rangle=T(x)$.  

Not all positive polynomials $T(x)$ can be written as sums of squares but as in the complex case we can find procedures to check whether or not $T'(x)=T(x)r^{2\ell-d}$ can be written as a sum of squares.  We will say that $T(x)\in \mathcal{S}(d,\ell)$ when $T$ is a real polynomial of degree $d$ if there exists some $\bar{Z}\in \End(\Sym(V^{\otimes \ell}))$ having $\Pi_{2\ell}\ket{\bar{Z}}=0$ such that
\begin{equation}
Z_{T'}+\bar{Z}\geq 0.
\end{equation}
If such a $\bar{Z}$ exists we have 
\begin{equation}
	Z_{T'}+\bar{Z} = \sum_i \lambda_i |Z_{T_i}\rangle \langle Z_{T_i}|  
\end{equation}
where each eigenvalue $\lambda_i\geq 0$ and the eigenvectors $|Z_{T_i}\rangle \in \Sym(V^{\otimes \ell})$. As a result there exist $T_i(x)$ such that
\begin{equation}
T'(x)=T(x)r^{2\ell-d}(x) = \sum_i\lambda_i(T_i(x))^2
\end{equation}
the right hand side is known as a {\it sum of squares} decomposition for $T'(x)$. The existence of this sum of square decomposition implies that $T(x)\geq 0$ for all $x$.

\subsubsection{Decomposing polynomial matrices using spherical harmonics}

Above, we determined that our polynomial matrices can be represented on the maximally symmetric subspace $\maxsymR$. 
As such, it is not hard to see that there exists another way to find the corresponding matrices explicitly, if we already know
the decomposition of the polynomial into its Fourier components 
\begin{align}
T(x) = \sum_{j=0}^{2a} \sum_{m=1}^{N(j,n)} t_{jm} s_{jm}(x)\ .
\end{align}
Let us first consider how to express $s_{jm}(x)$ itself as a matrix $M_{s_{jm}}$. 
Note that by the Funk-Hecke formula (see also Lemma~\ref{lem:sjmTrace}), we have 
\begin{align}
\bra{x}^{\otimes a} \sjmR \ket{x}^{\otimes a} = \ints (\inp{x}{z})^{2a} s_{jm}(z) dz = \frac{\omega_{n-1}}{\omega_n} \lambda(n,a,j) s_{jm}(x)\ ,
\end{align}
where $\omega_n$ is the surface area of the sphere $\sphere$ and $\lambda(n,a,j)$ is a normalizaton defined in Lemma~\ref{lem:simplifyLambda}. 
Letting 
\begin{align}
M_{s_{jm}} := \frac{\omega_n}{\omega_{n-1}\lambda(n,a,j)} \sjmR
\end{align}
is thus the corresponding maximally symmetric matrix. By linearity we can thus write the polynomial matrix as
\begin{align}
Z_T = \sum_{jm} t_{jm} M_{s_{jm}}\ .
\end{align}

\section{Moments and moment matrices}\label{sec:moments}

\subsection{Moment matrices for complex variables}

Once again, we consider the case of complex variables first, where much is known from quantum optics and quantum information in which complex vector spaces are the most relevant ones. 

%\subsubsection{Truncated moment matrices} 

Let us briefly recall the concepts of moment sequences and moment matrices in optimization 
(see~\cite{monique:survey} for an excellent survey) and translate them to our notation and symmetrized form.
Let $\mu$ be a measure on $\Complex P^{n-1}$.
Given a monomial $x^{*\bf i} x^{\bf j}$ with $|{\bf i}|=|{\bf j}|=a$ as described above we can define a moment of symmetric order $2a$ as follows 
\begin{align}\label{eq:mu}
y_{\bf i, \bf j} = \int \hat{x}^{* \bf i}\hat{x}^{\bf j} d\mu(x)\ .
\end{align} 
We define the moments in terms of components of the normalised vector $\hat{x}=x/\|x\|_2$ so that the integrand is a proper function on $\Complex P^{n-1}$.
A question of great concern is to determine whether some arbitrary (truncated) sequences $y = \{y_{\bf i,\bf j}\}_{\bf i,\bf j}$ can
be written in the form of~\eqref{eq:mu} for some measure $d\mu$. Such a measure is also known as a \emph{representing measure}.
One of the objectives of Lasserre's hierarchy is to provide us with a procedure that we can use to determine whether $y$ admits a representing measure and in general this problem can be partially addressed by forming suitable matrices of moments and testing for positivity of the matrix.

Let $Z_{\bf i,\bf j}$ denote the polynomial matrix for $x^{*\bf i} x^{\bf j}$ as defined in the previous section. Then the corresponding moment can be recovered from this matrix as
\begin{align}
\int_{\Complex P^{n-1}} \bra{\hat{x}}^{\otimes a} Z_{\bf i,\bf j} \ket{\hat{x}}^{\otimes a} d\mu(x) = 
\tr\left[\underbrace{\left(\int \proj{\hat{x}}^{\otimes a} d\mu(x)\right)}_{=:M_a(\mu)} Z_{\bf i, \bf j}\right] = y_{\bf i,\bf j}\ .
\end{align}
Here we see that the matrix $M_a(\mu)\in \End(\Sym(V^{\otimes a}))$ contains information about all the moments of symmetric order $2a$ and is known as a {\it truncated moment matrix}~\cite{monique:survey}. The particular arrangement of moments in this matrix differs from that of many truncated moment matrices that can be found in the literature but it is chosen to correspond to our chosen mapping of polynomials to matrices and has many useful properties. For example, as always in such constructions, $M_a(\mu)\geq 0$. Also normalization of the measure requires that $\tr(M_a(\mu))=1$. For all $a'\leq a$ we have $M_{a'}(\mu)=\tr_{\downarrow a'}(M_a(\mu))$. Finally as a result of this definition we can evaluate the average of a polynomial of symmetric degree $2a$ over the measure $d\mu(x)$ as follows
\begin{align}
\tr(M_a(\mu) Z_T) 
&= \int_{\Complex P^{n-1}}  \bra{\hat{x}}^{\otimes a} Z_T \ket{\hat{x}}^{\otimes a} d\mu(x) \nonumber \\
&= \int_{\Complex P^{n-1}} T(\hat{x}) d\mu(x)\ .
\end{align}
This justifies our choice of definition of the truncated moment matrices $M_a$. This average value can also be inferred from the truncated moment matrix $M_\ell(\mu)$ for all $\ell>a$. Let us define $T'(x)=T(x)r(x)^{2(\ell-a)}$, and the corresponding matrix $Z_{T'}$. We then have by~\eqref{eq:complexpartial} 
\begin{equation}
\tr(M_\ell(\mu)Z_{T'})=\tr(M_a(\mu)Z_T)=  \int_{\Complex P^{n-1}} T(\hat{x}) d\mu(x)\ .
\end{equation}
The set of valid truncated moment matrices can be defined as follows
\begin{align}
	\mathcal{M}_C(a)=\{M\in \End(\Sym(V^{\otimes a})) | \exists  \ d\mu(x):M=\int_{\Complex P^{n-1}}\proj{\hat{x}}^{\otimes a} d\mu(x)\}\ . 
\end{align}
Testing whether a representing measure exists for a matrix $M\in \End(\Sym(V^{\otimes a})) $ so that $M\in \mathcal{M}_C(a)$ is a difficult task in general. That is,
it is difficult to decide membership of $\mathcal{M}_{C}(a)$.
However, there 
are some straightforward necessary conditions for membership of $\mathcal{M}_C(a)$ which we can test. As noted above we must have $M\geq 0$ and $\tr(M)=1$, i.e., $M$ is a state.
Furthermore, $M$ is extendible in the sense that for all $\ell>a$ there exists a matrix $M_\ell\in  \End(\Sym(V^{\otimes \ell})) $ such that $M_\ell \geq 0$, $\tr(M_\ell) = 1$ 
and $\tr_{\ell-a}(M_\ell) = M$. For a given $M$ the question of whether such an $M_\ell$ exists is an instance of a semidefinite programming feasibility problem and is computationally tractable~\cite{boyd:book}. This leads us to define the following set 
\begin{align}
	\mathcal{M}_C(a,\ell)=&\left\{M\in \End(\Sym(V^{\otimes a}))  |  \exists  M_\ell \in  \End(\Sym(V^{\otimes \ell}) : M_\ell \geq 0,\right. \nonumber \\ &  \quad \left. \tr(M_\ell)=1,\ \tr_{\ell-a}(M_\ell) = M\right\} 
\end{align}
and we have the containment $\mathcal{M}_C(a)\subset \mathcal{M}_C(a,\ell) \  \forall \ell$.

\subsection{Moment matrices for real variables}
\label{sec:momentsreal}

Let us now turn to the case of real variables. As it turns out the main ideas are essentially analogous.
In analogy to the above, one defines a 
\emph{moment of order ${\bf i}$} with
as $y_{\bf i} = \int x^{\bf i} d\mu(x)$, where $x^{\bf i}$ is a monomial determined by ${\bf i}$ as outlined above~\cite{monique:survey} and $d\mu(x)$ is a normalised measure over the hypersphere $S^{n-1}$. As before we are interested in determining whether a given sequence of numbers $\{y_{\bf i}\}_{\bf i}$ can be true moments for some representing measure. Once again necessary conditions for this can be determined that are related to matrix positivity.

Let $Z_{\bf i}$ denote the (infinite) polynomial matrix for $x^{\bf i}$ as defined in the previous section. Note that $Z_{\bf i} \in \maxsymR$, 
that is, $Z_{\bf i}$ is a maximally symmetric matrix.
We can now recover the moment $y_{\bf i}$ 
using this matrix as
\begin{align}
\int \bra{x}^{\otimes 2a} \ket{Z_{\bf i}} d\mu(x) = 
\tr\left[\underbrace{\left(\ints \proj{x}^{\otimes a} d\mu(x)\right)}_{=:M_a(\mu)} Z_{\bf i}\right] = y_{\bf i}\ .
\end{align}
Here we see that the matrix $M_a(\mu)$ contains information about all the moments of order $d=2a$ and is known as a {\it truncated moment matrix}~\cite{monique:survey}. As in the complex case we have $M_a(\mu)\geq 0$, $\tr(M)=1$ due to normalization of the measure and for all $a'\leq a$ we have $M_{a'}(\mu)=\tr_{\downarrow a'}(M_a(\mu))$. Notice, however, that since now $x\in S^{n-1}$ we have $\proj{x}^{\otimes a} \in \maxsymR$ and thus also $M_a(\mu)\in \maxsymR$. Finally as a result of this definition we can evaluate the average of a polynomial of degree $2a$ over the measure $d\mu(x)$ as in the complex case
\begin{align}
\tr(M_a(\mu) Z_T) 
&= \int_{S^{n-1}}  \bra{x}^{\otimes a} Z_T \ket{x}^{\otimes a} d\mu(x) \nonumber \\
&= \int_{S^{n-1}} T(x) d\mu(x)\ . \label{eq:polyav}
\end{align}
As before this average value can be inferred from the truncated moment matrix $M_\ell(\mu)$ for all $\ell>a$. As before, let us define $T'(x)=T(x)r(x)^{2(\ell-a)}$, and 
let $Z_{T'}$ denote its polynomial matrix. We then have by~\eqref{eq:realpartial} 
\begin{equation}
\tr(M_\ell(\mu)Z_{T'})=\tr(M_a(\mu)Z_T)=   \int_{S^{n-1}} T(x) d\mu(x)\ .
\end{equation}
The set of valid truncated moment matrices can be defined as follows
\begin{align}
	\mathcal{M}_{+}(a)=\{M\in \maxsymR  | \exists  \ d\mu(x):M=\int_{\Complex P^{n-1}}\proj{x}^{\otimes a} d\mu(x)\}\ . 
\end{align}
Testing whether a representing measure exists for a matrix $M\in \maxsymR $ so that $M\in \mathcal{M}_{+}(a)$ remains a difficult task. However, as in the complex case there 
are several straightforward necessary conditions for membership of $\mathcal{M}_{+}(a)$. As noted above we must have $M\geq 0$ and $\tr(M)=1$. We also again want that $M$ is extendible
such that for any $\ell>a$ there exists a matrix $M_\ell\in  \maxsymEll $ satisfying $M_\ell \geq 0$, $\tr(M_\ell) = 1$ and $\tr_{\ell-a}(M_\ell) = M$. For a given $M$ the question of whether such an $M_\ell$ exists is once again an instance of a semidefinite programming feasibility problem. That is, membership of the following set
\begin{align}
	\mathcal{M}(a,\ell)=&\left\{M\in \maxsymR |  \exists  M_\ell \in  \maxsymEll : M_\ell \geq 0,\right. \nonumber \\ &  \quad \left. \tr(M_\ell)=1,\ \tr_{\ell-a}(M_\ell) = M\right\} 
\end{align}
can be decided using a semidefinite program. Clearly, we again have the containment $\mathcal{M}_{+}(a)\subset \mathcal{M}(a,\ell) \  \forall \ell$.

\section{$P$ and $Q$-representations}\label{sec:PvsQ}

In our proof of convergence, we make use of two well known concepts originating in quantum optics~\cite{scully}. 
These are the so-called Glauber-Sudarshan $P$-representation and Husimi $Q$-representation of a matrix. We will see that 
they have a close connection to what are known as moment matrices in optimization in our symmetrized form of the SDP hierachy.

\subsection{On complex projective spaces}

\subsubsection{$P$-representation}
Let us now consider an arbitrary matrix $M \in \End(\Sym((\Complex^n)^{\otimes \ell}))$ with $M \geq 0$. It is well-known from quantum 
optics (see e.g.~\cite{scully} or~\cite{matthias:tomo} for the form used here) that any such $M$ can be written as
\begin{align}\label{eq:NPrep}
	M = \int_{\Complex P^{n-1}} P_M(x) \proj{\hat{x}}^{\otimes \ell} dx\ ,
\end{align}
for some coefficients given by the function $P_M \in L^2(\Complex P^{n-1})$, i.e. the space of square-integrable functions on the complex projective space $\Complex P^{n-1}$ (see~\cite{matthias:tomo} for a quantum information treatment). 
This~\eqref{eq:NPrep} is known as the \emph{$P$-representation} of $M$. 
If there exists a $P_M$ such that for all $x$ we have $P_M(x) \geq 0$ we also say that $M$ admits a \emph{positive $P$-representation}. 
Note that for $\tr(M) = 1$
\begin{align}
1 = \tr(M) = 
\int_{\Complex P^{n-1}} P_M(x) (\inp{\hat{x}}{\hat{x}})^\ell dx = 
\int_{\Complex P^{n-1}} P_M(x) dx \ .
\end{align}
As such, we may think of the numbers $P_M(x)dx$ as a quasi-probability distribution/measure, and indeed one exists for \emph{any} such $M$. 
The question of whether the sequence of moments corresponding to $M$ admits a representing measure is thus equivalent to asking 
whether $M$ admits a \emph{positive} $P$-representation. As such, our symmetrized way of writing polynomials makes it clear
that there is a close relation between the question asked in optimization of whether a representing measure can be found, and the question that is of some concern in quantum optics of whether $M$ admits a positive $P$-representation.  

%It will be instructive to understand $\tr(M Z_T)$ for any $M \in \Sym((\Complex^n)^{\otimes \ell})$ and $Z_T$ is the polynomial matrix of some polynomial $T(x)$. 
%Because such an $M$ always admits a $P$-representation we have
%\begin{align}
%\tr(M Z_T) 
%&= \int_{\Complex P^{n-1}} P_M(x) \bra{\hat{x}}^{\otimes \ell} Z_T \ket{\hat{x}}^{\otimes \ell} dx \\
%&= \int_{\Complex P^{n-1}} P_M(x) T(x) dx\ .
%\end{align}
%That is, we can always understand this trace as the evaluation of the polynomial $T$ at points $x$ weighted by coefficients
%$P_M(x)$. As these are not necessarily positive, $P_M(x) dx$ is not necessarily a measure. This observation will lead to a nice interpretation of solutions of intermediate (non-optimal) levels of the SDP hierarchy discussed later.

\subsubsection{$Q$-representation}
Closely related to the $P$-representation is the so-called $Q$-representation of $M \in \Sym((\Complex^n)^{\otimes \ell})$, 
defined by the value of
\begin{align}
Q_M(x) := \bra{x}^{\otimes \ell} M\ket{x}^{\otimes \ell} 
\end{align}
for all $\ket{x} \in \Complex^n$. 
In the language of the previous section, we are now simply inverting the mapping of Hermitian polynomials to Hermitian matrices and thinking of $M$ as a polynomial matrix
for the polynomial $Q_M$. As we know this mapping is one-to-one and invertible and so $M$ is determined completely by the values 
of $Q_M$. It is also useful to note that for $M \geq 0$, we have $Q_M(x) \geq 0$ for all $x$ so that while a normalised $P$-function even for a positive $M$ does not have an interpretation as a measure we may construct valid measures using $Q$-functions and positive matrices. 
%. Furthermore, since by Schur's lemma
%\begin{align}
%\frac{1}{\dim(\Sym((\Complex^n)^{\otimes \ell}))} \int_{\Complex P^{n-1}} \proj{x}^{\otimes \ell} dx = \id_{\Sym((\Complex^n)^{\otimes \ell}}
%\end{align}
%we also have
%\begin{align}
%\tr(M) &= \tr\left(M \id_{\Sym((\Complex^n)^{\otimes \ell}}\right) = 
%\frac{1}{\dim(\Sym((\Complex^n)^{\otimes \ell}))} \int_{\Complex P^{n-1}} \bra{x}^{\otimes \ell}M\ket{x}^{\otimes \ell}dx\\
%&=
%\frac{1}{\dim(\Sym((\Complex^n)^{\otimes \ell}))} \int_{\Complex P^{n-1}} Q_M(x) dx\ .
%\end{align}
%Hence, for $M \geq 0$ and $\tr(M) = 1$ we have that $d\mu = Q_M(x)/\dim(\Sym((\Complex^n)^{\otimes \ell})) dx$ is in fact a measure (and not only a
%quasi-measure/distribution). \steph{Normalization of integral measure correct?}
For the complex case, it is known that there exists a way to convert a matrix' $P$-representation into its $Q$-representation and vice
versa for so-called coherent states~\cite{scully} or finite dimensional systems~\cite{matthias:tomo}. 

\subsubsection{Evaluation on a polynomial matrix}\label{sec:complexEval}

Before turning to the real case, it will be instructive to understand $\tr(M Z_T)$ where $M \geq 0$ with $M \in \Sym((\Complex^n)^{\otimes \ell})$ and $Z_T$ is the polynomial matrix of some polynomial $T(x)$. 
Because such an $M$ always admits a $P$-representation we have
\begin{align}
\tr(M Z_T) 
&= \int_{\Complex P^{n-1}} P_M(x) \bra{\hat{x}}^{\otimes \ell} Z_T \ket{\hat{x}}^{\otimes \ell} dx \nonumber \\
&= \int_{\Complex P^{n-1}} P_M(x) T(\hat{x}) dx \label{eq:ppolyevalcomplex}\ .
\end{align}
That is, we can always understand this trace as the evaluation of the polynomial $T$ at points $x$ weighted by coefficients
$P_M(x)$. As these are not necessarily positive, $P_M(x) dx$ is not necessarily a measure. This observation will lead to a nice interpretation of solutions of intermediate (non-optimal) levels of the SDP hierarchy discussed later.
However, for any $M$ with $\tr(M)=1$ that admits a positive $P$-representation 
we do of course have such a representing measure which will be the case if the final (optimal) level of the
SDP hierarchy is reached.

\subsection{On the hypersphere}

Let us now turn to the case of real variables. As it turns out the main ideas are essentially analogous, however, we need to prove the existence of a similar $P$-representation. For any $M\in \maxsymEll$ we would like there to exist a function $P_M(x)$ on $S^{n-1}$ such that
\begin{align}\label{eq:realP}
	M = \int_{S^{n-1}} P_M(x) \proj{x}^{\otimes \ell} dx\ .
\end{align}
We will use standard arguments to establish this. We start by noting that all $M\in \maxsymEll$ do possess $Q$-representations.

\subsubsection{$Q$-representation}
Just like in the complex case, we can again consider a $Q$-representation for any $M \in \maxsymEll$, 
defined by the value of
\begin{align}\label{eq:qfunc}
Q_M(x) := \bra{x}^{\otimes \ell} M\ket{x}^{\otimes \ell} 
\end{align}
for all $x \in \sphere$.
As before, we are now again simply thinking of $M$ itself as a polynomial matrix
for the polynomial $Q_M$ and the mapping we described in the previous section was one-to-one and invertible, only zero matrices map to zero polynomials and vice versa. Note that any maximally symmetric matrix $M \in \maxsymEll$ corresponds to a polynomial of degree 
at most $2\ell$ and hence $M$ is fully determined by~\eqref{eq:qfunc} evaluated at $2\ell + 1$ distinct points. 
Clearly, for $M \geq 0$, we have $Q_M(x) \geq 0$ for all $x$. 
%Unlike in the complex case, we can no longer make use of Schur's lemma to gain any insight into the sum of $Q_M(x)dx$ 
%since we are no longer considering an irreducible representation as in the case of the unitary group above.
%Nevertheless, with the help of the $P$-representation, we will see below that 
%that $Q_M(x) dx$ can indeed be normalized to form an appropriate measure 
%for matrices $M \geq 0$ with $\tr(M) = 1$. 

\subsubsection{Existence of a $P$-representation}\label{sec:PrealRep}
The following lemma shows that any maximally symmetric matrix does indeed admit a $P$-representation. The proof of this is  analogous to the derivation in quantum 
optics~\cite{scully} (see also~\cite{matthias:tomo} for a quantum information treatment).

\begin{lemma}\label{lem:hasPrepresentation}
	Any maximally symmetric matrix $M \in \maxsym( (\Real^n)^{\otimes \ell})$ admits a $P$-representation 
	\begin{align}
		M = \ints P_M(x) \proj{x}^{\otimes \ell} dx \ ,
	\end{align}
	for some function $P_M(x) $ on the sphere $S^{n-1}$.
\end{lemma}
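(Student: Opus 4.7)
The plan is to establish a $P$-representation by expanding $M$ in a convenient basis of $\maxsymEll$ consisting of the spherical harmonic matrices $\sjm$ defined in~\eqref{eq:defineSJM}, and then reading off $P_M$ directly from the coefficients.

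First, I claim that the finite collection $\{\sjm : 0 \leq j \leq 2\ell,\ 1 \leq m \leq N(j,n)\}$ forms a basis of $\maxsymEll$. Linear independence is immediate from Lemma~\ref{lem:sjmTrace}, which shows that these matrices are mutually orthogonal in the Hilbert--Schmidt inner product. For the spanning property, I use a duality argument: suppose $M \in \maxsymEll$ satisfies $\tr(M\,\sjm) = 0$ for all $j \leq 2\ell$ and $m$. Using the definition of $\sjm$ and linearity/cyclicity of the trace,
\begin{align}
\tr(M\,\sjm) = \ints s_{jm}(x)\,\bra{x}^{\otimes \ell} M \ket{x}^{\otimes \ell}\,dx = \ints s_{jm}(x)\,Q_M(x)\,dx.
\end{align}
Now $Q_M(x)$ is a homogeneous polynomial of degree $2\ell$ in $x$, and restricted to the sphere its expansion in spherical harmonics involves only components of degree $j \leq 2\ell$ (this is the separation of variables fact recalled in Section~\ref{sec:spherical}, using $r(x)^2 = 1$ on $\sphere$). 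Hence vanishing of all inner products with $s_{jm}$ for $j \leq 2\ell$ forces $Q_M \equiv 0$, and since the map $M \mapsto Q_M$ is injective on $\maxsymEll$ (as discussed in Section~\ref{sec:PvsQ}), we conclude $M = 0$. So the $\sjm$'s span $\maxsymEll$.

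Second, given this basis, I expand any $M \in \maxsymEll$ uniquely as
\begin{align}
M = \sum_{j=0}^{2\ell}\sum_{m=1}^{N(j,n)} c_{jm}\,\sjm
\end{align}
for real coefficients $c_{jm}$ (which can be read off via Lemma~\ref{lem:sjmTrace}, namely $c_{jm} = \tr(M\,\sjm)\,\omega_n^2/(\omega_{n-1}\,\lambda(n,\ell,j))$). Substituting the integral definition~\eqref{eq:defineSJM} and swapping the finite sum with the integral yields
\begin{align}
M = \ints \Biggl(\sum_{j=0}^{2\ell}\sum_{m=1}^{N(j,n)} c_{jm}\,s_{jm}(x)\Biggr) \proj{x}^{\otimes \ell}\,dx,
\end{align}
so defining $P_M(x) := \sum_{j,m} c_{jm}\,s_{jm}(x)$ produces the desired $P$-representation. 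Note that $P_M$ is automatically a polynomial of degree at most $2\ell$ on the sphere (not merely an $L^2$ function), which is stronger than what the lemma asserts.

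The main obstacle in this argument is the spanning step, and in particular verifying that $M$ is recoverable from $Q_M$ and that $Q_M$'s harmonic expansion truncates at degree $2\ell$. Once that is in hand, everything else is a bookkeeping exercise involving the Hilbert--Schmidt orthogonality of the $\sjm$. An equivalent route would be a dimension count: one can check directly that $\dim \maxsymEll = \sum_{j=0}^{2\ell} N(j,n)$ by identifying maximally symmetric matrices with vectors in $\Sym((\Real^n)^{\otimes 2\ell})$, and then linear independence alone suffices. Either way, the proof reduces to facts about spherical harmonics already collected in Section~\ref{sec:spherical}.
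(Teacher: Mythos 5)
Your argument rests on the same duality idea as the paper's proof: show that if $Q_M\equiv 0$ then $M=0$, and deduce from this that the map sending a function on the sphere to $\int P(x)\proj{x}^{\otimes\ell}dx$ is onto $\maxsymEll$. The paper does this abstractly by introducing the subspace $S_P$ of $P$-representable matrices and its orthogonal complement, using that $P_V$ may be any square-integrable function; you instead exhibit the spherical harmonic matrices $\sjm$ as a concrete spanning family and then read off $P_M$ directly. Your version has the advantage of producing an explicit formula for the Fourier coefficients of $P_M$ and the stronger conclusion that $P_M$ is in fact a polynomial of degree at most $2\ell$, which the paper's purely abstract existence argument does not deliver; the paper's proof is shorter because it never needs a basis, only the injectivity of $M\mapsto Q_M$.

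However, there is a concrete slip in your basis claim. You assert that $\{\sjm : 0\le j\le 2\ell,\ 1\le m\le N(j,n)\}$ is linearly independent, citing Lemma~\ref{lem:sjmTrace}. But Lemma~\ref{lem:simplifyLambda} gives $\lambda(n,\ell,j)=0$ for odd $j$, so by Lemma~\ref{lem:sjmTrace} we have $\tr\bigl((\sjm)^2\bigr)=0$ for odd $j$, which forces $\sjm=0$ (it is a real symmetric matrix, so its Hilbert--Schmidt norm vanishing means it is the zero matrix). Including these zero matrices destroys linear independence, and your proposed coefficient formula $c_{jm}=\tr(M\,\sjm)\,\omega_n^2/(\omega_{n-1}\lambda(n,\ell,j))$ divides by zero for odd $j$. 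The same issue affects your dimension-count remark: $\dim\maxsymEll=\binom{n+2\ell-1}{2\ell}$, which equals $\sum_{j\ \mathrm{even},\,0\le j\le2\ell}N(j,n)$ (a telescoping sum), not $\sum_{j=0}^{2\ell}N(j,n)$. The fix is simply to restrict your basis and sums to even $j$ throughout -- this is also exactly what the paper does when it notes that only $j=0,2,\ldots,2\ell$ occur in the expansion of $Q_M$. With that restriction in place, your spanning argument (vanishing of all inner products $\tr(M\,\sjm)$ for even $j\le 2\ell$ forces $Q_M\equiv 0$, hence $M=0$) is sound and the rest of your construction goes through.
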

\begin{proof}
	Note that the space of maximally symmetric operators that admits a $P$-representation is a linear subspace of $\maxsym( (\Real^n)^{\otimes \ell})$ defined as
	\begin{align}
		S_P := \left\{ V \in \maxsym((\Real^n)^{\otimes \ell})\mid \exists P_V \mbox{ s.t. } V = \ints P_V(x) \proj{x}^{\otimes \ell} dx\right\}\ .
	\end{align}
	Let $S_P^{\bot} := \{  W \in \maxsym( (\Real^n)^{\otimes \ell}) \mid \forall V \in S_P,\ \tr(V^T W) = 0\}$ denote its complement, i.e.
	the maximally symmetric matrices that do not admit a $P$-representation. 
	Our goal will be to show that the only maximally symmetric matrix $M \in S_P^{\bot}$ is $M = 0$. 
By defintion, we have that for any
	$M \in S_P^{\bot}$ and any $V \in S_P$
	\begin{align}\label{eq:givesZero}
		\tr\left(V^T M\right) &= \tr\left(VM\right) = \ints P_V(x) \bra{x}^{\otimes \ell}M\ket{x}^{\otimes \ell} dx\\
		&= \ints P_V(x) Q_M(x) dx = 0\ ,
	\end{align}
	where $Q_M(x) = \bra{x}^{\otimes \ell}M\ket{x}^{\otimes \ell}$ is the $Q$-representation of $M$. Note that 
	since $P_V(x)$ is an arbitrary square integrable  function on the hypersphere, ~\eqref{eq:givesZero} implies that $Q_M(x) = 0$ for all $x \in S^{n-1}$. Since $Q_M(x)$ is a polynomial of degree $2\ell$ we conclude that it is the zero polynomial. 
	Recall that there is a one-to-one mapping from polynomials to maximally symmetric matrices and the matrix $M$ for the zero polynomial is $M=0$.
%	Recall that any maximally symmetric matrix corresponds to a polynomial of degree at most $2\ell$, 
%	and $Q_M(x)$ corresponds to the evaluation of this polynomial at the point $x$. Since a polynomial of degree $2\ell$ is fully determined
%	by its value at $2\ell + 1$ points, $Q_M(x) = 0$ for all $x$ implies that $M$ must correspond to the zero polynomial and hence $M = 0$ 
%	as promised.
\end{proof}

As before, we speak of a \emph{positive $P$-representation} if $P_M(x) \geq 0$ for all $x$.
Note that for $\tr(M) = 1$ we have
\begin{align}
1 = \tr(M) = \ints P_M(x) (\inp{x}{x})^{\ell} dx = \ints P_M(x) dx\ .
\end{align}
As such, we may again think of the numbers $P_M(x)dx$ as a quasi-measure, and indeed one exists for \emph{any} such $M$. 
The question whether the sequence of moments corresponding to $M$ admits a representing measure is thus equivalent to asking 
whether $M$ admits a \emph{positive} $P$-representation. 

\subsubsection{Evaluation on a polynomial matrix}\label{sec:realEval}

The following observation again leads to a nice interpretation 
of the solutions at intermediate (non-optimal) levels of the SDP hierarchy.
More precisely, using the $P$-representation we can write $\tr(M Z_T)$ where $M \geq 0$ with $M \in \Sym((\Complex^n)^{\otimes \ell})$ and $Z_T$ is the polynomial matrix of some polynomial $T(x)$ as
\begin{align}
\tr(M Z_T) 
&= \ints P_M(x) \bra{x}^{\otimes \ell} Z_T \ket{x}^{\otimes \ell} dx \nonumber \\
&= \ints P_M(x) T(x) dx  \label{eq:ppolyevalreal} \ .
\end{align}
That is, we can always understand this trace as the evaluation of the polynomial $T$ at points $x$ weighted by coefficients
$P_M(x)$. Again, as these are not necessarily positive at an intermediate level, $P_M(x) dx$ is not a measure. 

\subsubsection{Relation between the $P$ and $Q$-representation}
We are now ready to work out the relation between the $P$ and $Q$-representations of a maximally symmetric matrix $M$. From the definition of the $Q$-representation we see that
they are related as
\begin{eqnarray}
Q_M(x)&=&\bra{x}^{\otimes \ell} M\ket{x}^{\otimes \ell} = \bra{x}^{\otimes \ell} \left(\int_{S^{n-1}} P_M(y) \proj{y}^{\otimes \ell} dy\right) \ket{x}^{\otimes \ell} \\
&=&\int_{S^{n-1}} (\langle x|y\rangle)^{2\ell} P_M(y)dy. \label{eq:convolution}
\end{eqnarray}
That is, the Q-representation can be written as a convolution of the P-representation. Note that 
this operation can be seen to commute with the action of the orthogonal group on functions on the hypersphere. 
Furthermore, recall from Section~\ref{sec:spherical} 
we can decompose $P_M=\sum_{jm} p_{jm}^M s_{jm}(x)$ and $Q_M=\sum_{jm} q^M_{jm} s_{jm}(x)$ where $p_{jm}^M$ and $q_{jm}^M$ are the Fourier coefficients of $P_M$ and $Q_M$ respectively. Recall also that since $Q_M$ is a polynomial of order $2\ell$ only $j=0,2,...,2\ell$ occur in the sum over $j$. In essence, the following lemma
is an easy consequence of the Funk-Hecke formula~\eqref{eq:FunkHecke} as employed in Lemma~\ref{lem:sjmTrace}

\begin{lemma}\label{lem:PvsQ}
	Let $M \in \maxsym( (\Real^n)^{\otimes \ell})$ be a maximally symmetrix matrix with $P$-representation $M = \sum_{jm} p_{jm}^M S_{jm}^\ell$ 
	and $Q$-representation $Q_M(x) = \bra{x}^{\otimes \ell} M \ket{x}^{\otimes \ell} = \sum_{jm} q_{jm}^M s_{jm}(x)$. Then
	\begin{align}
		q_{jm}^M = \frac{\omega_{n-1}}{\omega_n}\ \lambda(n,\ell,j)\ p_{jm}^M\ ,
	\end{align}
	where $\lambda(n,\ell,j)$ is defined in Lemma~\ref{lem:simplifyLambda}.
\end{lemma}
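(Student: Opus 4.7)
The plan is to proceed directly from the convolution identity~\eqref{eq:convolution}, which we already have at our disposal: substituting the $P$-representation of $M$ into the definition of the $Q$-representation yields
\begin{align}
Q_M(x) \;=\; \int_{S^{n-1}} (\langle x|y\rangle)^{2\ell}\, P_M(y)\, dy.
\end{align}
First I would expand $P_M(y) = \sum_{j,m} p_{jm}^M\, s_{jm}(y)$ in spherical harmonics as in~\eqref{eq:fourier}. The core step is then to exchange the sum and the integral and apply the Funk--Hecke identity~\eqref{eq:FunkHecke} to each term, giving
\begin{align}
\int_{S^{n-1}} (\langle x|y\rangle)^{2\ell}\, s_{jm}(y)\, dy \;=\; \frac{\omega_{n-1}}{\omega_n}\,\lambda(n,\ell,j)\, s_{jm}(x).
\end{align}
Collecting terms yields $Q_M(x) = \sum_{j,m} \frac{\omega_{n-1}}{\omega_n}\lambda(n,\ell,j)\, p_{jm}^M\, s_{jm}(x)$, and comparing with the expansion $Q_M(x) = \sum_{j,m} q_{jm}^M s_{jm}(x)$ in the orthogonal basis of spherical harmonics (uniqueness of Fourier coefficients, which follows from~\eqref{eq:sphericalNorm}) delivers the claimed relation $q_{jm}^M = \frac{\omega_{n-1}}{\omega_n}\lambda(n,\ell,j)\, p_{jm}^M$.

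The only non-mechanical point is justifying the interchange of sum and integral, since the spherical-harmonic expansion of $P_M$ is in general an infinite series. Two options are available. One is to approximate: let $P_M^{(J)}$ be the truncation to $j\le J$; then $P_M^{(J)} \to P_M$ in $L^2(S^{n-1})$, and since the kernel $y \mapsto (\langle x|y\rangle)^{2\ell}$ is bounded (hence in $L^2$), Cauchy--Schwarz gives convergence of $\int (\langle x|y\rangle)^{2\ell} P_M^{(J)}(y)\,dy$ to $Q_M(x)$ pointwise. The other option is even cleaner: since $(\langle x|y\rangle)^{2\ell}$ is a polynomial of degree $2\ell$ in $y$, its own spherical-harmonic expansion only involves $j\le 2\ell$, so by orthogonality the integral against $s_{jm}$ for $j>2\ell$ vanishes, and effectively only finitely many terms of the $P_M$ expansion contribute. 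This also makes transparent why $Q_M$, being a polynomial of degree $2\ell$, has only finitely many nonzero Fourier coefficients and why the lemma's identity is vacuous (both sides zero) for $j > 2\ell$.

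I expect no serious obstacle: the lemma is essentially a restatement of the Funk--Hecke formula, packaged so that convolution with the coherent-state kernel $(\langle x|y\rangle)^{2\ell}$ acts diagonally on the spherical-harmonic basis with eigenvalue $\frac{\omega_{n-1}}{\omega_n}\lambda(n,\ell,j)$. Morally, this is the Schur-lemma computation hinted at in the remark after~\eqref{eq:FunkHecke}: the convolution commutes with the $O(n)$-action, so it acts as a scalar on each irreducible $\hil_j(S^{n-1})$, and Funk--Hecke identifies that scalar. The only care required is the routine bookkeeping with the normalisation conventions (the factor $\omega_{n-1}/\omega_n$ coming from using the normalised measure on the sphere, as already noted below~\eqref{eq:FunkHecke}).
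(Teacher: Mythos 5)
Your proof is correct and is essentially the same argument the paper uses: both reduce the claim to the Funk--Hecke formula applied to the kernel $(\langle x|y\rangle)^{2\ell}$ together with the orthogonality of spherical harmonics. The only cosmetic difference is that you work directly from the convolution identity $Q_M(x) = \int (\langle x|y\rangle)^{2\ell} P_M(y)\,dy$ and compare Fourier coefficients, whereas the paper extracts $q_{jm}^M$ and $p_{jm}^M$ by pairing $M$ against the spherical-harmonic matrix $S_{jm}^\ell$ (computing $\tr(S_{jm}^\ell M)$ two ways and invoking Lemma~\ref{lem:sjmTrace}, which is itself Funk--Hecke in disguise); your care about the sum--integral interchange is sound, and the ``polynomial of degree $2\ell$'' observation is the cleaner of the two justifications you offer.
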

\begin{proof}
	As the spherical harmonics are orthogonal, we can recover the Fourier coefficients of $Q_M(x)$ as
	\begin{align}
		\tr\left(\sjm M\right) &= \ints s_{jm}(x) \bra{x}^{\otimes \ell}M\ket{x}^{\otimes \ell} dx\\
		&= 
		\ints s_{jm}(x) Q_M(x) dx = \sum_{j'm'} q^M_{j'm'} \ints s_{j'm'}(x) s_{jm}(x) dx\\
		&= \frac{q_{jm}^M}{\omega_n}\ ,
	\end{align}
	where we have again used the fact that the trace is linear and cyclic.
	Writing $M$ in terms of its $P$-representation and using Lemma~\ref{lem:sjmTrace} we thus have
	\begin{align}
		q_{jm}^M = \omega_n \tr\left(\sjm M\right) = \omega_n \sum_{j'm'} p_{j'm'}^M \tr\left(\sjm \sjmt\right) = \frac{\omega_{n-1}}{\omega_n}\ \lambda(n,\ell,j)\ p_{jm}^M\ ,
	\end{align}
	which is our claim.
\end{proof}

The central idea of our main result will be that for $j\ll \ell$ we have that $p_{jm}^M$ is (up to normalization) almost equal to $q_{jm}^M$.
%Note that one implication of this result is that the $P$-representation of $M\in  \maxsym( (\Real^n)^{\otimes \ell})$ can be written as a sum over spherical harmonic functions 
%in spaces $\hil_j(\sphere)$with $j\leq 2\ell$.
The following lemma makes it clear, which normalization we want to choose to relate the $P$ and $Q$ representations for any state $M$. This is done
by comparing the coefficients $p_{0}^M$ and $q_{0}^M$.

\begin{lemma}\label{lem:approxM}
	Let $M \in \maxsym( (\Real^n)^{\otimes \ell})$ be a maximally symmetric matrix satisfying $M \geq 0$ and $\tr(M) = 1$ with
$P$-representation $M = \sum_{jm} p_{jm}^M S_{jm}^\ell$ and
	$Q$-representation $Q_M(x) = \bra{x}^{\otimes \ell} M \ket{x}^{\otimes \ell} = \sum_{jm} q_{jm}^M s_{jm}(x)$. Then the matrix
	\begin{align}\label{eq:targetQ}
		\tilde{M} = \frac{\omega_n}{\omega_{n-1}\lambda(n,\ell,0)} \ints Q_M(x) \proj{x}^{\otimes \ell} dx
	\end{align}
	is satisfies $\tilde{M} \geq 0$ and $\tr(\tilde{M}) = 1$.
\end{lemma}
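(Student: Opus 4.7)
The plan is to verify the two claims separately, both of which follow directly from machinery already developed in the excerpt.

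For positivity, I would argue as follows. Since $M \geq 0$ the $Q$-representation satisfies $Q_M(x) = \bra{x}^{\otimes \ell} M \ket{x}^{\otimes \ell} \geq 0$ for every $x \in S^{n-1}$, as noted right after the definition of the $Q$-representation. Each $\proj{x}^{\otimes \ell}$ is manifestly positive semidefinite, and $\omega_n, \omega_{n-1} > 0$ together with $\lambda(n,\ell,0) > 0$ (from Lemma~\ref{lem:simplifyLambda}) ensure the prefactor is positive. Hence $\tilde{M}$ is a nonnegatively weighted integral of positive semidefinite rank-one matrices and is itself positive semidefinite.

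For the normalization, I first compute
\[
\tr(\tilde{M}) = \frac{\omega_n}{\omega_{n-1} \lambda(n,\ell,0)} \ints Q_M(x)\, \tr\!\left(\proj{x}^{\otimes \ell}\right) dx = \frac{\omega_n}{\omega_{n-1} \lambda(n,\ell,0)} \ints Q_M(x)\, dx,
\]
since $\tr(\proj{x}^{\otimes \ell}) = (\inp{x}{x})^\ell = 1$ on the sphere. The task reduces to evaluating $\ints Q_M(x)\, dx$. Expanding $Q_M$ in spherical harmonics and using orthogonality together with Lemma~\ref{lem:sjmTraceless} (which gives $s_0(x) = 1/\sqrt{\omega_n}$), this integral equals $q_0^M/\sqrt{\omega_n}$.

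Finally I would pin down $q_0^M$ using the hypothesis $\tr(M) = 1$. Writing $M = \sum_{jm} p_{jm}^M S_{jm}^\ell$ and applying Lemma~\ref{lem:sjmTraceless}, only the $j=0$ term survives, yielding $1 = \tr(M) = p_0^M/\sqrt{\omega_n}$, hence $p_0^M = \sqrt{\omega_n}$. The relation in Lemma~\ref{lem:PvsQ} then gives
\[
q_0^M = \frac{\omega_{n-1}}{\omega_n}\,\lambda(n,\ell,0)\, p_0^M = \frac{\omega_{n-1}\,\lambda(n,\ell,0)}{\sqrt{\omega_n}},
\]
so $\ints Q_M(x)\,dx = \omega_{n-1} \lambda(n,\ell,0)/\omega_n$, and the prefactor in the definition of $\tilde{M}$ was chosen precisely to cancel this, giving $\tr(\tilde{M}) = 1$. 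There is no real obstacle here: the lemma is essentially a calibration check, and every ingredient (the definition of $Q_M$, the normalization of $s_0$, and the $P$--$Q$ Fourier relation) has already been set up in the preceding sections.
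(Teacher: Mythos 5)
Your proof is correct and follows essentially the same route as the paper: positivity of $Q_M$ gives $\tilde M \geq 0$, the trace reduces to $q_0^M/\sqrt{\omega_n}$, the hypothesis $\tr(M)=1$ pins down $p_0^M$, and Lemma~\ref{lem:PvsQ} closes the loop. The only cosmetic difference is that you evaluate $\ints Q_M(x)\,dx$ directly via $\tr(\proj{x}^{\otimes\ell})=1$ and orthogonality of the $s_{jm}$, whereas the paper phrases the same computation through $\tr(S_{jm}^\ell)$ and Lemma~\ref{lem:sjmTraceless}.
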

\begin{proof}
	Since $M \geq 0$, we have $Q_M(x) \geq 0$ for all $x \in S^{n-1}$ and hence $\tilde{M} \geq 0$. It remains to prove that $\tr(\tilde{M}) = 1$. 
	Using the $Q$-represention of $M$ and Lemma~\ref{lem:sjmTraceless} we can write the trace of the integral in~\eqref{eq:targetQ} as
	\begin{align}
		\tr\left(\sum_{jm} q_{jm}^M \ints s_{jm}(x) \proj{x}^{\otimes \ell} dx\right) = \sum_{jm} q_{jm}^M \tr\left(\sjm\right) 
		= \frac{q_{0}^M}{\sqrt{\omega_n}}\ .
	\end{align}
	Similarly, using the $P$-representation of $M$ and Lemma~\ref{lem:sjmTraceless}
	\begin{align}
		1 = \tr\left(M\right) = \frac{p_{0}^M}{\sqrt{\omega_n}}\ .
	\end{align}
	Using the relations between the $P$ and the $Q$-representation of a maximally symmetric matrix (Lemma~\ref{lem:PvsQ}) we thus
	have that
	\begin{align}
		\tr\left(\tilde{M}\right) = 
		\frac{p_{0}^M}{\sqrt{\omega_n}} = 1\ .
	\end{align}
\end{proof}

%\subsubsection{To be put somewhere}
%
%An important consequence of this lemma is that we can express the trace of any maximally symmetric matrix $M$ in terms of the lowest order Fourier coefficient $p_0^M$ of its $P$-representation.
%
%\begin{corollary}
%	Let $M \in \maxsym( (\Real^n)^{\otimes \ell})$ be a maximally symmetrix matrix with $P$-representation $M = \sum_{jm} p_{jm}^M S_{jm}^\ell$. 
%	Then $\tr(M) = p_0^M/\sqrt{\omega_n}$.
%\end{corollary}
%\begin{proof}
%	This is an immediate consequence of Lemma~\ref{lem:sjmTraceless} and the linearity of the trace.
%\end{proof}
%
%
%

\section{The existence of approximate representing measures} \label{sec:deFinetti}

Our key result is a theorem on the existence of approximate representing measures for a set of possible moments given by a maximally symmetric matrix $M\in \mathcal{M}(a,\ell)$. Specifically we will show that if $M\in \mathcal{M}(a,\ell)$ then $M$ can be approximated by a maximally symmetric matrix $\tilde{M}$ that has a normalised positive P-function with an error that scales like $d^2/\ell$. 

Our main theorem was inspired by a set of results in quantum information that go by the name of finite quantum de Finetti theorems~\cite{matthias:definetti,robert:compendious,renato:symmetry,robert:thesis, robert:definetti1}. In its original form~\cite{definetti}, de Finetti's theorem states that
any exchangeable probability distribution over an infinite sequence of random variables $X_1,X_2,\ldots$ 
is equal to a convex sum of product distributions\footnote{A probability distribution is \emph{exchangeable} if for all permutations $\pi$ and all $x_1,x_2,\ldots$
the probability of a sequence and its permutation is the same, i.e., $P(x_1,x_2,\ldots) = P(x_{\pi(1)},x_{\pi(2)},\ldots)$.}.
Diaconis and Freedman~\cite{df} later showed that a similar statement still holds approximately if the distribution is not infinitely, but merely $\ell$-exchangeable. 
In particular, they showed that 
for any exchangeable probability distribution
$P_{X_1,\ldots,X_\ell}$ of $\ell$ random variables, the distribution on the first $X_1,\ldots,X_\red$ is closely approximated by a convex sum of product 
distributions. 
More precisely, 
there exists some measure $\mu$ on the set of distributions such that for all $x_1,\ldots,x_\red$
\begin{align}
	P_{X_1,\ldots,X_\red}(x_1,\ldots,x_\red) \approx \int P_X(x_1)\ldots P_X(x_\red) d\mu(P_X)\ ,
\end{align}
for some approximation parameter depending on $\red$ and $\ell$ and closeness is measured in terms of the statistical distance.
As quantum states can be understood as a generalization of classical probability distributions, De Finetti theorems for infinite~\cite{hudson,petz:definetti,caves:definetti,fuchs:definetti}
as well as finitely exchangeable quantum states~\cite{caves,matthias:definetti,robert:compendious,renato:symmetry,robert:thesis, robert:definetti1} 
have attracted much attention in quantum information. 

\subsection{For the case of complex variables}

\subsubsection{Statement and consequences}\label{sec:complexDefinetti}
We first state a finite de Finetti theorem for the complex case that is already known from quantum information. We note that it is in fact possible
to prove a complex de Finetti theorem in a rather different way along the lines of our real de Finetti below, but as this is likely only of interest to quantum
information theorists we omit an alternate proof~\cite{as:inprep}.
Instead, we simply state the following result proven in~\cite{matthias:definetti}, translated into a language that provides some insight
into the problem at hand.

\begin{theorem}[Unitary finite deFinetti Theorem~\cite{matthias:definetti}]\label{thm:unitarydeFinetti}
	Let $n,a,\ell \in \Natural$ and $a< \ell$. Let $M \in \symEll$ be a symmetric matrix that is a state (i.e., $\tr(M) = 1$ and $M \geq 0$), and let
	$Q_M(x) = \bra{x}^{\otimes \ell} M \ket{x}^{\otimes \ell}$ be its $Q$-representation. Define
	\begin{align}
		\tilde{M}_a := \frac{1}{\dim(\Sym( (\Complex^n)^{\otimes \ell}))} 
			\int Q_M(x) \proj{x}^{\otimes a} dx \ .
	\end{align}
	Then the reduced matrix $M_a := \tr_{\downarrow a }(M)$ is approximated by the matrix $\tilde{M}_a$ as
	\begin{align}
		\left\|M_a - \tilde{M}_a\right\|_{1} \leq \frac{a\ n^2}{\ell}\ .
	\end{align}
	Furthermore, $\tilde{M}_a$ is a state (i.e.,$\tr(\tilde{M}_a) \geq 0$, $\tr(\tilde{M}_a) = 1$), and has a positive $P$-representation.
\end{theorem}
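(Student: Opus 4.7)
The plan is to represent both $M_a$ and $\tilde{M}_a$ as integrals against a common (signed) $P$-function for $M$ and reduce the theorem to a pointwise trace-norm estimate on an explicit operator-valued kernel on the symmetric subspace.

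First, I would note that every symmetric state $M \in \Sym((\Complex^n)^{\otimes \ell})$ admits a $P$-representation $M = \int P_M(y)\proj{y}^{\otimes \ell}\,dy$; this is the unitary analogue of Lemma~\ref{lem:hasPrepresentation}, proved identically via Schur's lemma applied to the irreducible $\U(n)$-action on the symmetric subspace. Tracing out $\ell - a$ factors gives $M_a = \int P_M(y)\proj{y}^{\otimes a}\,dy$, while substituting the convolution identity $Q_M(x) = \int P_M(y)|\braket{x}{y}|^{2\ell}\,dy$ into the definition of $\tilde{M}_a$ and swapping the order of integration produces $\tilde{M}_a = \int P_M(y)\,K(y)\,dy$, where $K(y) := \frac{1}{D_\ell}\int|\braket{x}{y}|^{2\ell}\proj{x}^{\otimes a}\,dx$ and $D_\ell = \dim\Sym((\Complex^n)^{\otimes \ell})$.

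Second, I would evaluate $K(y)$ sharply. By $\U(n)$-covariance it suffices to compute $K$ at a single reference vector; the resulting operator is $\U(n-1)$-invariant on $\Sym((\Complex^n)^{\otimes a})$ and hence diagonal in the number state basis. Its eigenvalues are closed-form Beta-type moment integrals that follow from the Schur identity $\int\proj{x}^{\otimes m}\,dx = \Pi_m/D_m$. Comparing term-by-term with $\proj{y}^{\otimes a}$, which is supported on a single eigenspace, yields $\|K(y) - c_\ell\proj{y}^{\otimes a}\|_1 = O(an^2/\ell)$ uniformly in $y$ after fixing the correct normalization $c_\ell$; the $a/\ell$ rate comes from the ratios of consecutive Beta moments and the $n^2$ factor from summing over the $a+1$ many $\U(n-1)$-isotypic components weighted by their dimensions.

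The main obstacle is promoting this pointwise kernel estimate to a trace-norm bound on $M_a - \tilde{M}_a = \int P_M(y)[\proj{y}^{\otimes a} - K(y)]\,dy$, since $P_M$ need not be a positive measure and $\int|P_M(y)|\,dy$ can diverge. I would circumvent this by duality: writing $\|M_a - \tilde{M}_a\|_1 = \sup_{\|A\|_\infty\leq 1}|\tr(A(M_a - \tilde{M}_a))|$, the supremand equals $|\tr(M\, B_A)|$ for an operator $B_A \in \End(\Sym((\Complex^n)^{\otimes \ell}))$ whose $Q$-representation at $y$ is $\tr(A[\proj{y}^{\otimes a} - K(y)])$. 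The uniform pointwise bound from step two translates to $\|B_A\|_\infty \leq O(an^2/\ell)$, and H\"older then gives $|\tr(MB_A)| \leq \|M\|_1\|B_A\|_\infty$, completing the estimate. Positivity of $\tilde{M}_a$ follows from $Q_M \geq 0$ whenever $M \geq 0$; the trace normalization is verified using the Schur identity; and the explicit positive $P$-representation of $\tilde{M}_a$ is $P_{\tilde{M}_a}(x) \propto Q_M(x)$ by construction.
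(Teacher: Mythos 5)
The paper does not prove Theorem~\ref{thm:unitarydeFinetti}: it states the result, cites it to~\cite{matthias:definetti}, and explicitly notes that an alternate proof along the lines of the real case is omitted. So there is no internal proof to compare against, and the question is whether your argument stands on its own. It does not, because of a gap at the duality step.

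Your reduction to the identity $\tr(A(M_a - \tilde M_a)) = \tr(M B_A)$, with $B_A$ the operator on $\Sym((\Complex^n)^{\otimes\ell})$ whose $Q$-function is $\tr(A[\proj{y}^{\otimes a}-K(y)])$, is correct, and you are right that the naive route through $\int|P_M|$ fails. The gap is the assertion that the uniform kernel estimate $\sup_y\|K(y)-\proj{y}^{\otimes a}\|_1 = O(an^2/\ell)$ ``translates to'' $\|B_A\|_\infty = O(an^2/\ell)$. What the kernel estimate controls is $\sup_y|Q_{B_A}(y)| = \sup_y|\bra{y}^{\otimes\ell}B_A\ket{y}^{\otimes\ell}|$, the supremum of the quadratic form over \emph{product} unit vectors $\ket{y}^{\otimes\ell}$ only. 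But $\|B_A\|_\infty$ is the supremum of $|\bra{v}B_A\ket{v}|$ over \emph{all} unit vectors $v\in\Sym((\Complex^n)^{\otimes\ell})$, and the two can differ by a factor that grows with $\ell$: a balanced number-state projector $\proj{\textbf{i}}$ with $i_k\approx\ell/n$ has $\sup_y Q_{\proj{\textbf{i}}}(y) = \Theta((n/\ell)^{(n-1)/2})\to 0$ by Stirling's approximation, yet $\|\proj{\textbf{i}}\|_\infty = 1$. A small $Q$-function therefore does not yield a small operator norm.

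Moreover the reformulation is essentially circular. By von Neumann duality, $\sup_{\|A\|_\infty\leq 1}\|B_A\|_\infty$ equals $\sup_{\|M\|_1\leq 1}\|M_a - \tilde M_a\|_1$, which is precisely the quantity the theorem asserts is $O(an^2/\ell)$. Passing to $B_A$ is a legitimate restatement, but it transfers the entire content of the theorem onto the operator-norm bound on $B_A$, which your proposal then disposes of by the invalid inference above. Closing the gap would require a genuine comparison of $\Pi_\ell(A\otimes\id^{\otimes(\ell-a)})\Pi_\ell$ with the coherent-state-averaged lift of $A$ to $\Sym((\Complex^n)^{\otimes\ell})$, eigenvalue by eigenvalue or via the $\U(n)$-isotypic decomposition; that is the actual content of the argument in~\cite{matthias:definetti}, and it is the analogue of what the paper does in the real case through Lemma~\ref{thm:realMetaFinettiHarmonic} and Theorem~\ref{thm:definettiMS1}.
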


A useful consequence of the de Finetti theorem is that by~\eqref{eq:nearlySameValue} 
for any matrices $Z_T$ with $\|Z_T\|_\infty \leq 1$ the difference between $\tr(Z_T M_1)$ and $\tr(Z_T M_2)$ is no more than $\eps$.
In particular, this means that if $Z_T$ is a polynomial matrix and thus $\tr(Z_T M_a)$ is the evaluation of the polynomial $T$ at points $x$
weighted by the cofficients of its $P$-representation $P_{M_a}(x)dx$ (see Section~\ref{sec:complexEval}), then 
the evaluation of $T$ at points weighted by $dm(x) = Q_M(x)dx/\dim(\Sym( (\Complex^n)^{\otimes \ell}))$
takes on almost the same value. Note that whereas $P_{M_a}(x)dx$ is \emph{not} a measure, $dm(x)$ actually is one.

\subsection{For the case of real variables}
We now proceed to prove a de Finetti theorem for real, maximally symmetric matrices. It should be noted that a de Finetti theorem for arbitrary real matrices
in the symmetric subspace cannot hold~\cite{caves}. In fact, this even lead to some speculation that de Finetti theorems may not be useful to study convergence
of hierarchies of semidefinite programs for polynomial optimisation~\cite{fernando:hierarchy}. However, as we saw earlier we do not care about arbitrary real matrices, as the relevant space is the space of
\emph{maximally} symmetric matrices. Indeed, the counterexample given in~\cite{caves} is real, but \emph{not} maximally symmetric. 
Intuitvely, the reason why our proof fails for general real symmetric (but not maximally symmetric) matrices is that the proof of the existence of a real
$P$-representation (Lemma~\ref{lem:PvsQ}) fails at the point where we need that $Q_M(x) = 0$ implies that $M = 0$, since general symmetric
matrices are \emph{not} equivalent to unique polynomials over real valued variables. 

\subsubsection{A de Finetti for polynomials}
We first prove a de Finetti theorem where the approximation is in terms of a norm that relates directly 
to the optimization of polynomials on the sphere.
More precisely, we will show approximation in terms of the \emph{F1}-norm on the space of
maximally symmetric matrices $A \in \maxsymEll$ defined as
\begin{align}
\|A\|_{F1} := \sup_{\|F\|_{\infty} \leq 1} \tr(Z_F A)\ ,
\end{align}
where the maximization is taken over homogeneous polynomials $F$ with polynomial matrix $Z_F$
and $\|F\|_{\infty} = \max_{x \in S^{n-1}} |F(x)|$ is the $p\rightarrow \infty$-norm for functions 
on the sphere. Claim~\ref{claim:F1isnorm} shows that this quantity is indeed a norm. We refer to the introduction for a high level description of this proof.

\begin{theorem}\label{thm:definettiF1}
	Let $n,a,\ell \in \Natural$ with $n \geq 3$, $a < \ell$ and $\ell \geq 2a^2(a + \frac{n}{2} - 1) - \frac{n}{2}$. Let $M \in \maxsymEll$ be a maximally symmetric matrix that is a state (i.e., $\tr(M) = 1$ and $M \geq 0$), and let
	$Q_M(x) = \bra{x}^{\otimes \ell} M \ket{x}^{\otimes \ell}$ be its $Q$-representation. Define
	\begin{align}
		\tilde{M}_a := \left(\frac{2^{2\ell} \omega_n}{\sqrt{\pi} \omega_{n-1}} \frac{\gam{\ell + 1}\gam{\ell + \frac{n}{2}}}{\gfrac{n-1}{2} \gam{2\ell + 1}}\right)
	\ints Q_M(x) \proj{x}^{\otimes a} dx \ .
	\end{align}
	Then the reduced matrix $M_a := \tr_{\downarrow a}(M)$ is approximated by the matrix $\tilde{M}_a$ as
	\begin{align}
		\left\|M_a - \tilde{M}_a\right\|_{F1} \leq \frac{4a^2\left(a + \frac{n}{2} - 1\right)}{2\ell + n}\ .
	\end{align}
	Furthermore, $\tilde{M}_a$ is a state (i.e., $\tr(\tilde{M}_a) \geq 0$, $\tr(\tilde{M}_a) = 1$), and has a positive $P$-representation.
\end{theorem}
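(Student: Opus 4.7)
The plan is to use the $P$-representation of $M$ (Lemma~\ref{lem:hasPrepresentation}) to rewrite both $M_a$ and $\tilde{M}_a$ as weighted sums of the spherical harmonic matrices $\sjmR$, compute the $F1$-norm via Funk-Hecke, and exploit positivity of the $Q$-representation to bound the result. Concretely, I would first write $M = \ints P_M(y)\proj{y}^{\otimes \ell}\,dy$ and expand $P_M = \sum p_{jm}^M s_{jm}$. Since $\inp{y}{y}=1$ on $\sphere$, partial tracing gives $M_a = \ints P_M(y)\proj{y}^{\otimes a}\,dy = \sum_{j\leq 2a,m} p_{jm}^M \sjmR$; only $j\leq 2a$ survives because $\sjmR = 0$ for $j>2a$. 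Inserting the convolution~\eqref{eq:convolution} into the definition of $\tilde{M}_a$, applying Funk-Hecke together with Lemma~\ref{lem:PvsQ}, and identifying the overall constant as $C=\omega_n/[\omega_{n-1}\lambda(n,\ell,0)]$ via the Gamma-function form of $\lambda(n,\ell,0)$ from Lemma~\ref{lem:simplifyLambda}, yields
\[
M_a - \tilde{M}_a \;=\; \sum_{j\leq 2a,m}\alpha_j\,p_{jm}^M\,\sjmR,\qquad \alpha_j \;:=\; 1-\frac{\lambda(n,\ell,j)}{\lambda(n,\ell,0)},
\]
with $\alpha_0=0$ by construction. That $\tilde{M}_a$ is a state with a positive $P$-representation then follows at once: $Q_M\geq 0$ from $M\geq 0$ gives $P_{\tilde{M}_a}(x)=CQ_M(x)\geq 0$, and $\tr(\tilde{M}_a)=Cq_0^M/\sqrt{\omega_n}=1$ exactly as in the proof of Lemma~\ref{lem:approxM}.

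For the $F1$-norm, fix any homogeneous polynomial $F$ of degree at most $2a$ with $\|F\|_\infty\leq 1$ and expand $F=\sum f_{jm}s_{jm}$. The decomposition of $Z_F$ into the matrices $M_{s_{jm}}$ (end of Section~\ref{sec:realpoly}) combined with the orthogonality relation of Lemma~\ref{lem:sjmTrace} gives $\tr(Z_F \sjmR)=f_{jm}/\omega_n$, hence
\[
\tr\!\bigl(Z_F(M_a-\tilde{M}_a)\bigr) \;=\; \frac{1}{\omega_n}\sum_{j\leq 2a,m}\alpha_j f_{jm} p_{jm}^M.
\]
Substituting $p_{jm}^M = [\omega_n/(\omega_{n-1}\lambda(n,\ell,j))]q_{jm}^M$ from Lemma~\ref{lem:PvsQ} and applying Parseval on $\sphere$ converts this sum into an integral against the \emph{positive} density $Q_M$:
\[
\tr\!\bigl(Z_F(M_a-\tilde{M}_a)\bigr) \;=\; \frac{\omega_n}{\omega_{n-1}}\ints Q_M(x)\,H(x)\,dx,\qquad H(x):=\sum_{j\leq 2a,m}\frac{\alpha_j f_{jm}}{\lambda(n,\ell,j)}s_{jm}(x).
\]
Since $Q_M\geq 0$ and $\ints Q_M\,dx = \omega_{n-1}\lambda(n,\ell,0)/\omega_n$ (as in the proof of Lemma~\ref{lem:approxM}), this immediately gives $|\tr(Z_F(M_a-\tilde{M}_a))|\leq \lambda(n,\ell,0)\,\|H\|_\infty$.

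The remaining and main technical step is to show $\|H\|_\infty\leq \epsilon/\lambda(n,\ell,0)$ with $\epsilon:=4a^2(a+n/2-1)/(2\ell+n)$. The route is to recognize that the operator $F\mapsto \tilde F$ with $\tilde F(y):=C\int (\inp{x}{y})^{2\ell} F(x)\,dx$ is averaging against a bona-fide probability kernel (since $C\int (\inp{x}{y})^{2\ell}\,dx=1$ by Funk-Hecke with the constant harmonic), so $\|\tilde F\|_\infty\leq\|F\|_\infty$. A direct Funk-Hecke computation identifies $F-\tilde F = \sum_{j\leq 2a, m}\alpha_j f_{jm} s_{jm}$, so the pointwise closeness of $\tilde F$ to $F$ on polynomials of degree $\leq 2a$ reduces to uniform control of the $\alpha_j$. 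Using the explicit Gamma-ratio form of $\lambda(n,\ell,j)$ from Lemma~\ref{lem:simplifyLambda}, one obtains a telescoping estimate yielding $|\alpha_j|\leq\epsilon$ for all $j\leq 2a$, saturating at $j=2a$. The delicate point, which is the main obstacle, is the final passage from this uniform coefficient bound to a true $L^\infty$ bound on $H$ (or equivalently on $F-\tilde F$): Fourier multipliers on $L^\infty(\sphere)$ are not generally controlled by $\max_j|\alpha_j|$ alone, so one must carefully package the estimate through the positive-kernel representation via $Q_M$ above in order to invoke $L^1$-$L^\infty$ duality. This is precisely the step where the \emph{maximal} symmetry of $M$ is essential -- Lemma~\ref{lem:hasPrepresentation} only holds in that setting -- and it is also the point at which the Caves et al.~\cite{caves} counterexample for general symmetric matrices would otherwise block the proof.
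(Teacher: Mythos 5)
You have the right skeleton: the $P$-representation of $M$ (Lemma~\ref{lem:hasPrepresentation}), the orthogonality of the spherical harmonic matrices (Lemma~\ref{lem:sjmTrace}), the $P$--$Q$ Fourier coefficient relation (Lemma~\ref{lem:PvsQ}), and the positivity of $Q_M$ to show $\tilde{M}_a$ is a state. Your integral identity
$\tr\bigl(Z_F(M_a-\tilde{M}_a)\bigr)=\frac{\omega_n}{\omega_{n-1}}\ints Q_M(x)H(x)\,dx$ is correct, and it is in fact a repackaging of the key algebraic identity in the paper,
$\tr\bigl(Z_F(M_a-\tilde{M}_a)\bigr)=\sum_{j\le 2a}\bigl(\tfrac{\lambda(n,\ell,0)}{\lambda(n,\ell,j)}-1\bigr)\tr\bigl(Z_F^j\tilde{M}_a\bigr)$, since $\tr(Z_F^j\tilde{M}_a)=C\ints Q_MF^j\,dx$.

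However, there are two problems with how you close the argument. First, a small algebraic slip: the Fourier multiplier on $F^j$ appearing in $\lambda(n,\ell,0)H$ is $\alpha'_j:=\lambda(n,\ell,0)/\lambda(n,\ell,j)-1$, not $\alpha_j:=1-\lambda(n,\ell,j)/\lambda(n,\ell,0)$. Thus $\lambda(n,\ell,0)H=\sum_j\alpha'_jF^j$, while your $F-\tilde F=\sum_j\alpha_jF^j$; these differ frequency-by-frequency by a factor $\lambda(n,\ell,0)/\lambda(n,\ell,j)\ge1$, so they are not equivalent and the probability-kernel contraction $\|\tilde F\|_\infty\le\|F\|_\infty$ does not directly bear on $H$. (Corollary~\ref{cor:epsilonBound} is exactly what converts an $\alpha_j$ bound into an $\alpha'_j$ bound, at the cost of a factor $2$, so this is more than a typo.) Second, and more importantly, you explicitly leave the central step — getting from a per-$j$ coefficient bound to an $L^\infty$ bound on $H$ — unresolved, calling it ``the delicate point.'' That is precisely the step the theorem needs, and without it the proof is not complete.

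The paper does \emph{not} attempt the $\|H\|_\infty$ route you gesture at. Instead it applies the triangle inequality frequency-by-frequency: $\left\|M_a-\tilde{M}_a\right\|_{F1}\le\sum_{j\le 2a}|\alpha'_j|\,\|\tilde{M}_a\|_{F1}$, then bounds $\|\tilde{M}_a\|_{F1}=1$ by positivity of $Q_M$ and sums $\sum_j|\alpha'_j|$ via Corollary~\ref{cor:epsilonBound}. That sum over the $\le a$ even values of $j$ is exactly where the extra factor of $a$ (hence the $4a^2$ rather than $4a$) in the stated bound comes from; a successful $\max_j$-type bound such as you hoped for would actually give a stronger conclusion, and you should take your own inability to prove it as evidence you were aiming at something the paper does not claim. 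To complete your argument in the paper's spirit, replace the final ``$\|H\|_\infty\le\epsilon/\lambda(n,\ell,0)$'' step by the cruder bound $|\tr(Z_F(M_a-\tilde M_a))|\le\sum_j|\alpha'_j|\sup_{\|G\|_\infty\le 1}|\tr(Z_G\tilde M_a)|$, then show the sup is $1$ and invoke Corollary~\ref{cor:epsilonBound} together with the constraint $\ell\ge 2a^2(a+n/2-1)-n/2$ (needed so the corollary applies).
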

\begin{proof}
We first show that $\tilde{M}_a$ is a state.
Using the fact that $M \geq 0$ and $\tr(M)=1$ it is not hard to see (Lemma~\ref{lem:approxM}) that the matrix
\begin{align}
\tilde{M} = \frac{\omega_n}{\omega_{n-1}\lambda(n,\ell,0)} \ints Q_M(x) \proj{x}^{\otimes \ell} dx\ ,
\end{align}
where $\lambda(n,\ell,j)$ is defined as in Lemma~\ref{lem:simplifyLambda}, satisfies $\tilde{M} \geq 0$ and $\tr(\tilde{M}) = 1$. Hence
\begin{align}
	\tilde{M}_a = \tr(\tilde{M}) = \frac{\omega_n}{\omega_{n-1}\lambda(n,\ell,0)} \ints Q_M(x) \proj{x}^{\otimes a} dx
\end{align}
satisfies $\tilde{M}_a \geq 0$ and $\tr(\tilde{M}_a) = 1$. Note that $Q_M(x)$ is positive because $M \geq 0$. 

Let us now prove the claimed upper bound. In terms of the Fourier coefficients, we can write $\tilde{M}_a$ in its $P$-representation
as 
\begin{align}
\tilde{M}_a = \int P_{\tilde{M}_a}(x) \proj{x}^{\otimes a} dx\ , 
\end{align}
where
\begin{align}\label{eq:prepApprox}
 P_{\tilde{M}_a}(x) = \frac{\omega_n}{\omega_{n-1} \lambda(n,\ell,0)} \sum_{jm} q_{jm}^M s_{jm}(x)\ .
\end{align}
%Let $\hat{P}_{M_a}(x)$ denote the same function, however summing only over $j \leq 2a$. 
Similarly, since $M$ is a maximally symmetric matrix, it admits a $P$-representation (Lemma~\ref{lem:hasPrepresentation}) as 
\begin{align}
M = \ints P_M(x) \proj{x}^{\otimes \ell} dx = \sum_{jm} p_{jm}^M \sjm\ .
\end{align}
Given its $P$-representation we can immediately write down the reduced matrix as
\begin{align}\label{eq:pForm}
M_a = \ints P_{M_a}(x) \proj{x}^{\otimes a} dx = \sum_{jm} p_{jm}^M S_{jm}^a\ .
\end{align}

Our goal is to show that the low order
Fourier coefficients $p_{jm}^M$ stemming from the $P$-representation are very closely related to those of the $Q$-representation.
For $Z_F = \sum_{jm} p_{jm}^F S_{jm}^a$, let $Z_F^j = \sum_m p_{jm}^F S_{jm}^a$.
%Note that since $Q_M(x) \geq 0$, $\omega_n, \omega_{n-1} \geq 0$ and $\lambda(n,\ell,0) \geq 0$ we immediately see that the the matrix $\tilde{M}_a$ itself
For any $Z_F \in \maxsymR$ we can write 
\begin{align}
&\left|\tr\left(Z_F(M_a - \tilde{M}_a)\right)\right|\\
&\qquad=
\left|\sum_{j=0}^{2a} \sum_m p^F_{jm} \sum_{j'm'} \left(p^M_{j'm'} - \frac{\omega_n}{\omega_{n-1}\lambda(n,\ell,0)} q^{M}_{j'm'}\right)
\tr\left(S_{jm}^a S_{j'm'}^a\right)\right|\nonumber \\
&\qquad=
\left|\sum_{j=0}^{2a} \sum_m p^F_{jm} \left(p^M_{jm} - \frac{\omega_n}{\omega_{n-1}\lambda(n,\ell,0)} q^{M}_{jm}\right) \tr\left(S_{jm}^a S_{jm}^a\right)\right| \\
&\qquad=
\left|\sum_{j=0}^{2a} \sum_m p^F_{jm} \left(\frac{\omega_{n}}{\omega_{n-1}\lambda(n,\ell,j)} - 
\frac{\omega_n}{\omega_{n-1}\lambda(n,\ell,0)}\right) 
q^{M}_{jm} \tr\left(S_{jm}^a S_{jm}^a\right)\label{eq:stepp1}\right| \\
&\qquad=
\left|\sum_{j=0}^{2a} \left(\frac{\lambda(n,\ell,0)}{\lambda(n,\ell,j)}-1\right)
\left(\frac{\omega_n}{\omega_{n-1}\lambda(n,\ell,0)} \right)
 \sum_m p^{F}_{jm}
q^{M}_{jm} \tr\left(S_{jm}^a S_{jm}^a\right)\label{eq:stepp2}\right| \\
&\qquad=
\left|\sum_{j=0}^{2a} \left(\frac{\lambda(n,\ell,0)}{\lambda(n,\ell,j)}-1\right)\tr(Z_F^j \tilde{M}_a)\right|
\end{align}
where ~\eqref{eq:stepp1} follows from the orthogonality of the spherical harmonic
matrices $S_{jm}^r$ and $S_{j'm'}^r$ for $j\neq j'$ and $m \neq m'$ (Lemma~\ref{lem:sjmTrace}),
\eqref{eq:stepp2} from the relation between the $P$ and the $Q$-representation (Lemma~\ref{lem:PvsQ}),
and the last equality again from the orghonality of the spherical harmonics.
We can thus write the norm as
\begin{align}
\|M_a - \tilde{M}_a\|_{F1} &= \sup_{\|F\|_{\infty} \leq 1} \tr(Z_F(M_a - \tilde{M}_a)\\
&\leq 
\sum_{j=0}^{2a} \left|\left(\frac{\lambda(n,\ell,0)}{\lambda(n,\ell,j)}-1\right)\right| \|\tilde{M}_a\|_{F1}\\
&\leq 
\frac{4a^2\left(a + \frac{n}{2} - 1\right)}{2\ell + n}
\|\tilde{M}_a\|_{F1}\ .
\end{align}
where the first inequality follows from the fact that $\|\cdot\|_{F1}$ is a norm
and the second from Corollary~\ref{cor:epsilonBound}.
It remains to bound $\|\tilde{M}_a\|_{F1} = \sup_{\|F\|_{\infty} \leq 1} |\tr(\tilde{M}_a Z_F)|$.
Using the fact that for all $x$ $Q_M(x) \geq 0$ and $\omega_n/(\omega_{n-1}\lambda(n,\ell,0)) \geq 0$ we can bound
\begin{align}
|\tr(\tilde{M}_a Z_F)| 
&= \left|\frac{\omega_n}{\omega_{n-1}\lambda(n,\ell,0)} \ints Q_M(x) F(x) dx\right|\\
&\leq \frac{\omega_n}{\omega_{n-1}\lambda(n,\ell,0)} \ints Q_M(x) |F(x)|  dx\\
&\leq \frac{\omega_n}{\omega_{n-1}\lambda(n,\ell,0)} \ints Q_M(x)  dx\\
&=\tr(\tilde{M}_a) = 1\ ,
\end{align}
which yields our claim.
\end{proof}

\subsubsection{A standard de Finetti}
For our convergence results this is all we need.
For completeness, however, we state a real de Finetti theorem in its usual form where the norm
is not related the evaluation of polynomials but related directly to matrices. 
%As we care about the space of maximally symmetric matrices, our approximation is in terms of the MS1-norm. 
%Note that closeness in the terms of the MS1-norm means that for any polynomial $T$ represented by a polynomial matrix $Z_T$ with $\|Z_T\|_{\infty} \leq 1$ the two matrices
%take on almost the same value when evaluated on $Z_T$ in the sense of Section~\ref{sec:realEval}.
For our proof, let us first establish the following lemma.

\begin{lemma}\label{thm:realMetaFinettiHarmonic}
	Let $n,a,\ell,j \in \Natural$ with $n \geq 3$, $a < \ell$, and even $j \leq 2a$. Let $M \in \maxsymEll$ be a maximally symmetric matrix that is a state (i.e., $\tr(M) = 1$ and $M \geq 0$), and let
	$Q_M(x) = \bra{x}^{\otimes \ell} M \ket{x}^{\otimes \ell}$ be its $Q$-representation. Define
	\begin{align}
		\tilde{M}_a := \left(\frac{2^{2\ell} \omega_n}{\sqrt{\pi} \omega_{n-1}} \frac{\gam{\ell + 1}\gam{\ell + \frac{n}{2}}}{\gfrac{n-1}{2} \gam{2\ell + 1}}\right)
	\ints Q_M(x) \proj{x}^{\otimes a} dx \ ,
	\end{align}
	and the reduced matrix $M_a := \tr_{\downarrow a}(M)$. Then any for harmonic homogeneous polynomial $F$ of degree $j$, i.e., 
	for any maximally symmetric matrix $Z_F \in \maxsymR$ of the form $Z_F = \sum_m p_{jm}^F S_{jm}^a$,
	\begin{align}
		\left|\tr\left[Z_F\left(M_a - \tilde{M}_a\right)\right]\right| \leq 
		\frac{j\left(\frac{j+n}{2} -1\right)}{2\ell+ n}\ \left|\tr(Z_F M_a)\right|\ .
	\end{align}
	Furthermore, $\tilde{M}_a$ is a state (i.e.,$\tr(\tilde{M}_a) \geq 0$, $\tr(\tilde{M}_a) = 1$), and has a positive $P$-representation.
\end{lemma}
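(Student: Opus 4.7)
The plan is to expand both $M_a$ and $\tilde{M}_a$ in the spherical-harmonic-matrix basis $\{S_{jm}^a\}$, read off the relationship between their Fourier coefficients through the $P$/$Q$ dictionary of Lemma~\ref{lem:PvsQ}, and then use orthogonality so that a harmonic $Z_F$ of pure degree $j$ selects a single scalar factor $1 - \lambda(n,\ell,j)/\lambda(n,\ell,0)$. Bounding that factor by $j(\tfrac{j+n}{2}-1)/(2\ell+n)$ is a pure special-function calculation analogous to Corollary~\ref{cor:epsilonBound}.

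First I would verify that $\tilde{M}_a$ is a state with a positive $P$-representation. The statement $\tr(\tilde{M}_a)=1$ and $\tilde{M}_a\geq 0$ is immediate from Lemma~\ref{lem:approxM} applied to $M$ at level $\ell$: the lemma gives $\tilde{M} = \frac{\omega_n}{\omega_{n-1}\lambda(n,\ell,0)}\int Q_M(x)\proj{x}^{\otimes\ell}\,dx$ with $\tilde{M}\geq 0$ and $\tr(\tilde{M})=1$, and $\tilde{M}_a$ is its partial trace $\tr_{\downarrow a}(\tilde{M})$, which preserves positivity and trace. The $P$-representation of $\tilde{M}_a$ is simply the nonnegative function $\frac{\omega_n}{\omega_{n-1}\lambda(n,\ell,0)} Q_M(x)$, which is positive since $M\geq 0$ implies $Q_M(x)\geq 0$ for all $x\in S^{n-1}$.

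Next I would write both matrices in the $S_{jm}^a$ basis. Since $M$ is maximally symmetric, Lemma~\ref{lem:hasPrepresentation} gives $M = \sum_{jm} p_{jm}^M S_{jm}^\ell$, and linearity of the partial trace yields $M_a = \sum_{jm} p_{jm}^M S_{jm}^a$. For $\tilde{M}_a$, expanding $Q_M(x) = \sum_{jm} q_{jm}^M s_{jm}(x)$ inside the defining integral produces
\begin{align}
\tilde{M}_a = \frac{\omega_n}{\omega_{n-1}\lambda(n,\ell,0)} \sum_{jm} q_{jm}^M S_{jm}^a = \sum_{jm} \frac{\lambda(n,\ell,j)}{\lambda(n,\ell,0)}\, p_{jm}^M\, S_{jm}^a ,
\end{align}
where the second equality substitutes $q_{jm}^M = \frac{\omega_{n-1}}{\omega_n}\lambda(n,\ell,j)\, p_{jm}^M$ from Lemma~\ref{lem:PvsQ}. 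Subtracting,
\begin{align}
M_a - \tilde{M}_a = \sum_{jm}\left(1 - \frac{\lambda(n,\ell,j)}{\lambda(n,\ell,0)}\right) p_{jm}^M\, S_{jm}^a.
\end{align}

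Now for $Z_F = \sum_m p_{jm}^F S_{jm}^a$ of pure harmonic degree $j$, the orthogonality relation in Lemma~\ref{lem:sjmTrace} kills every cross-term of different degree or $m$-index, giving
\begin{align}
\tr\bigl[Z_F(M_a-\tilde{M}_a)\bigr] = \left(1 - \frac{\lambda(n,\ell,j)}{\lambda(n,\ell,0)}\right)\sum_m p_{jm}^F p_{jm}^M \tr(S_{jm}^a S_{jm}^a) = \left(1 - \frac{\lambda(n,\ell,j)}{\lambda(n,\ell,0)}\right)\tr(Z_F M_a),
\end{align}
where in the last step the same orthogonality argument collapses $\tr(Z_F M_a)$ to exactly the same sum. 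Taking absolute values, the lemma reduces to proving the scalar inequality
\begin{align}
\left|1 - \frac{\lambda(n,\ell,j)}{\lambda(n,\ell,0)}\right| \leq \frac{j\left(\frac{j+n}{2}-1\right)}{2\ell+n}.
\end{align}
This is the one real obstacle: I would use the explicit Gamma-function formula for $\lambda(n,\ell,j)$ from Lemma~\ref{lem:simplifyLambda} to express the ratio as a product of the form $\prod_{k=0}^{j/2-1}\frac{2\ell - 2k}{2\ell+n+2k}$ (valid for even $j$, which is the relevant case since $Q_M$ is of degree $2\ell$ and has only even-$j$ harmonic components when paired against $S_{jm}^a$), observe that the ratio is at most $1$ so the absolute value equals $1$ minus the product, and then use the elementary estimate $1-\prod_k(1-\epsilon_k) \leq \sum_k \epsilon_k$ with $\epsilon_k = (n+4k)/(2\ell+n+2k)$. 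Telescoping this sum collapses to the claimed bound. This argument mirrors the proof of Corollary~\ref{cor:epsilonBound} that is already invoked in Theorem~\ref{thm:definettiF1}, so no new machinery is required.
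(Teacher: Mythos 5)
Your proposal matches the paper's argument essentially step for step: expand $M_a$ and $\tilde{M}_a$ in the $S_{jm}^a$ basis via the $P$/$Q$ dictionary of Lemma~\ref{lem:PvsQ}, use the orthogonality from Lemma~\ref{lem:sjmTrace} so that the pure-degree-$j$ matrix $Z_F$ picks out the single scalar factor $1-\lambda(n,\ell,j)/\lambda(n,\ell,0)$, and then bound that factor. The only (cosmetic) difference is in the final scalar estimate, where the paper invokes Lemma~\ref{lem:epsilonBound} (which bounds the smallest factor of the product and applies Bernoulli's inequality) rather than your term-by-term estimate $1-\prod_t(1-\epsilon_t)\leq\sum_t\epsilon_t$ with an arithmetic-series sum; both give the identical bound $j\bigl(\tfrac{j+n}{2}-1\bigr)/(2\ell+n)$, though note that the lemma you want to cite for this is Lemma~\ref{lem:epsilonBound}, not Corollary~\ref{cor:epsilonBound} (which carries an extra factor of $2$).
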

\begin{proof}
The fact that $\tilde{M}_a$ forms a state has been shown in the proof of Theorem~\ref{thm:definettiF1}.
Recall also that $M_a$ and $\tilde{M}_a$ can be written in terms of the P representation of $M$.
Since $F$ is harmonic homogeneous of degree $j$ its polynomial matrix $Z_F$ only has components for this $j$, i.e.,
its $P$-representation is of the form
\begin{align}
Z_F = \sum_{m=1}^{N(n,j)} p_{jm}^F S_{jm}^a\ .
\end{align}
We thus have
\begin{align}
&\tr\left(Z_F(M_a - \tilde{M}_a)\right)\\
&\qquad= 
\sum_m p^F_{jm} \sum_{j'm'} \left(p^M_{j'm'} - \frac{\omega_n}{\omega_{n-1}\lambda(n,\ell,0)} q^{M}_{j'm'}\right) 
\tr\left(S_{jm}^a S_{j'm'}^a\right)\nonumber \\
&\qquad= 
\sum_m p^F_{jm} \left(p^M_{jm} - \frac{\omega_n}{\omega_{n-1}\lambda(n,\ell,0)} q^{M}_{jm}\right) \tr\left(S_{jm}^a S_{jm}^a\right)\label{eq:step1} \\
&\qquad=
\left(1 - \frac{\lambda(n,\ell,j)}{\lambda(n,\ell,0)}\right) \sum_m p^{F}_{jm}
p_{jm}^M  \tr\left(S_{jm}^a S_{jm}^a\right) \label{eq:step2}\\
&\qquad = 
\left(1 - \frac{\lambda(n,\ell,j)}{\lambda(n,\ell,0)}\right) \tr(Z_F M_a)\ ,\label{eq:step3}
\end{align}
where ~\eqref{eq:step1} and~\eqref{eq:step2} follow 
from the orthogonality of the matrices $S_{jm}^r$ and $S_{j'm'}^r$ for $j\neq j'$ and $m \neq m'$ (Lemma~\ref{lem:sjmTrace}),
and \eqref{eq:step2} from the relation between the $P$ and the $Q$-representation (Lemma~\ref{lem:PvsQ}).
It remains to bound~\eqref{eq:step3}. Lemma~\ref{lem:epsilonBound} shows that for any even $j$
\begin{align}
1 - \frac{\lambda(n,\ell,j)}{\lambda(n,\ell,0)}
\leq 
\frac{j\left(\frac{j+n}{2} -1\right)}{2\ell+ n}\ .
\end{align}
which is our claim.
\end{proof}

We are now ready to prove a real de Finetti theorem for the matrix L1-norm.

\begin{theorem}\label{thm:definettiMS1}
	Let $n,a,\ell \in \Natural$ with $n \geq 3$ and $a < \ell$. Let $M \in \maxsymEll$ be a maximally symmetric matrix that is a state (i.e., $\tr(M) = 1$ and $M \geq 0$), and let
	$Q_M(x) = \bra{x}^{\otimes \ell} M \ket{x}^{\otimes \ell}$ be its $Q$-representation. Define
	\begin{align}
		\tilde{M}_a := \left(\frac{2^{2\ell} \omega_n}{\sqrt{\pi} \omega_{n-1}} \frac{\gam{\ell + 1}\gam{\ell + \frac{n}{2}}}{\gfrac{n-1}{2} \gam{2\ell + 1}}\right)
	\ints Q_M(x) \proj{x}^{\otimes a} dx \ .
	\end{align}
	Then the reduced matrix $M_a := \tr_{\downarrow a}(M)$ is approximated by the matrix $\tilde{M}_a$ as
	\begin{align}
		\left\|M_a - \tilde{M}_a\right\|_{1} \leq \frac{2a^2\left(a + \frac{n}{2} - 1\right)}{2\ell + n}\ .
	\end{align}
	Furthermore, $\tilde{M}_a$ is a state (i.e.,$\tr(\tilde{M}_a) \geq 0$, $\tr(\tilde{M}_a) = 1$), and has a positive $P$-representation.
\end{theorem}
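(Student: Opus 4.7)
The plan is to follow the skeleton of the proof of Theorem~\ref{thm:definettiF1}, but to bound the ordinary matrix $L^1$-norm directly through the per-harmonic estimate supplied by Lemma~\ref{thm:realMetaFinettiHarmonic}, in order to obtain the factor-of-two improvement in the constant. The fact that $\tilde M_a$ is a state with a positive $P$-representation has already been established inside the proof of Theorem~\ref{thm:definettiF1} (via Lemma~\ref{lem:approxM}), so I would simply invoke it.

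For the norm estimate, unfold $\|M_a - \tilde M_a\|_1 = \sup_{\|N\|_\infty \le 1}|\tr(N(M_a - \tilde M_a))|$ and first reduce to polynomial test matrices. Setting $A := M_a - \tilde M_a \in \maxsymR$, the vector $\ket{A}$ lies in $\Sym((\Real^n)^{\otimes 2a})$, so $\tr(NA) = \langle A|N\rangle = \langle A|\Pi_{2a}|N\rangle = \tr(Z_F A)$, where $Z_F$ is the maximally symmetric matrix corresponding via Section~\ref{sec:realpoly} to the polynomial $F(x) := \bra{x}^{\otimes a} N \ket{x}^{\otimes a}$. Since $\|\ket{x}^{\otimes a}\| = 1$ for $x \in \sphere$, we have $\|F\|_\infty \le \|N\|_\infty \le 1$. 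Decompose $F = \sum_{j = 0,2,\ldots,2a} F_j$ into spherical harmonic components; only even $j$ appear because $F(-x) = F(x)$. The $j=0$ term drops out because $\tr M_a = \tr \tilde M_a = 1$, and for each nonzero even $j \le 2a$, Lemma~\ref{thm:realMetaFinettiHarmonic} gives $|\tr(Z_{F_j}(M_a - \tilde M_a))| \le \epsilon_j |\tr(Z_{F_j} M_a)|$ with $\epsilon_j = \frac{j((j+n)/2 - 1)}{2\ell + n}$.

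Summing by the triangle inequality and noting that $\epsilon_j$ is increasing in $j$, so maximised at $j = 2a$ with $\epsilon_{2a} = \frac{2a(a + n/2 - 1)}{2\ell + n}$, while the sum runs over exactly $a$ even indices $j \in \{2,4,\ldots,2a\}$, one arrives at
\[
|\tr(N(M_a - \tilde M_a))| \;\le\; \epsilon_{2a} \sum_{j=2,4,\ldots,2a} |\tr(Z_{F_j} M_a)|.
\]
Taking the supremum over admissible $N$, the theorem reduces to establishing the uniform per-harmonic bound $|\tr(Z_{F_j} M_a)| \le 1$ whenever $F$ arises from some $N$ with $\|N\|_\infty \le 1$. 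Combined with the factor $a$ from the count of indices and with $\epsilon_{2a}$, this would yield exactly the claimed $\frac{2a^2(a + n/2 - 1)}{2\ell + n}$.

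The main obstacle is precisely this per-component estimate: the crude H\"older bound $|\tr(Z_{F_j} M_a)| \le \|Z_{F_j}\|_\infty \|M_a\|_1$ is too weak, because spherical harmonic projection can inflate the operator norm of $Z_F$. I would instead transfer the problem to $\tilde M_a$ using the identity $\tr(Z_{F_j} M_a) = (\lambda(n,\ell,0)/\lambda(n,\ell,j)) \tr(Z_{F_j}\tilde M_a)$, which follows from Lemma~\ref{lem:PvsQ} and the orthogonality of the $\sjm$ (Lemma~\ref{lem:sjmTrace}), and then exploit the positive $P$-representation of $\tilde M_a$ to rewrite the right-hand side as $\int_{\sphere} P_{\tilde M_a}(x) F_j(x)\,dx$, which is controlled by $\|F_j\|_\infty$ since $P_{\tilde M_a}\ge 0$ and integrates to one. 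The delicate point is to balance the growth factor $\lambda_0/\lambda_j$ against the harmonic $L^\infty$ norms of $F_j$ (themselves controlled via $\|F\|_\infty \le 1$ and the structure of harmonic projection on the sphere) so that the total coefficient stays at $a$ rather than blowing up with $j$; this is what produces the gain over the $F1$-norm bound of Theorem~\ref{thm:definettiF1}.
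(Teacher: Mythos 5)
Your plan tracks the paper's own proof step for step: decompose the test matrix into harmonic degrees $j$, apply the per-degree bound of Lemma~\ref{thm:realMetaFinettiHarmonic}, and sum over the $a$ even degrees $j\in\{2,\ldots,2a\}$ using the worst-case factor at $j=2a$; the state and positive-$P$ assertions are imported from the proof of Theorem~\ref{thm:definettiF1} exactly as you propose.

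The place you stop — establishing $|\tr(Z_{F_j}M_a)|\le 1$ per harmonic — is precisely the step the paper takes, and it takes it via the very H\"older estimate you dismiss: the paper replaces the supremum over $\|\bar Z_F\|_\infty\le 1$ by independent per-$j$ suprema over $\|Z_F^j\|_\infty\le 1$ and then uses $M_a\ge 0$, $\tr(M_a)=1$ to conclude $\tr(Z_F^j M_a)\le 1$. That sup-swap implicitly assumes that projection onto the degree-$j$ isotypic component of $\maxsymR$ does not increase the operator norm, an assumption the paper labels only as \emph{convexity of the norm} without further argument; your worry about inflation under harmonic projection is therefore directed at a real soft spot. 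However, your proposed escape — transferring to $\tilde M_a$ via the $\lambda(n,\ell,0)/\lambda(n,\ell,j)$ ratio and the positive $P$-representation of $\tilde M_a$ — does not close the gap: it leaves you needing $\|F_j\|_\infty\le 1$ given only $\|F\|_\infty\le 1$, which is the sphere's Lebesgue-constant problem for the degree-$j$ harmonic projector and is likewise not automatic for $n\ge 3$. In short, you have reconstructed the paper's argument faithfully and correctly flagged the one step it passes over quickly, but neither your detour nor the paper itself supplies the missing contraction estimate; to match the paper you should simply write the per-$j$ H\"older step as they do, while noting that a fully rigorous treatment would require a bound on $\|Z_F^j\|_\infty$ in terms of $\|\bar Z_F\|_\infty$.
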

\begin{proof}
	We want to show that the distance between $M_a$ and $\tilde{M}_a$ is bounded.
By the definition of the L1-norm 
\begin{align}\label{eq:oneCor}
	\left\|M_a - \tilde{M}_a\right\|_{1} = \sup_{\|\bar{Z}_F\|_{\infty} \leq 1} \tr\left(\bar{Z}_F (M_a - \tilde{M}_a)\right)\ ,
\end{align}
where the maximum is taken over all $\bar{Z}_F \in \End(R^{\otimes a})$.
Recall from the proof of Lemma~\ref{lem:PvsQ} that the space of maximally symmetric matrices forms a linear subspace. As such, 
we can write $\bar{Z}_F = Z_F + Z_F^{\perp}$ where $Z_F \in \maxsymR$ and $\tr(Z_F^{\perp} S) = 0$ for any maximally symmetric matrix $S \in \maxsymR$.
Furthermore, note that we can expand the maximally symmetric component as
\begin{align}
Z_F = \sum_{j=0}^{2a} Z_F^j\ ,
\end{align}
where $Z_F^j$ is the polynomial matrix of a harmonic homogeneous polynomial of degree $j$. 
For each individual $j$ we can bound
\begin{align}
\sup_{\|Z_F^j\|_{\infty} \leq 1}  \tr\left(Z_F^j (M_a - \tilde{M}_a)\right)
&\leq
\frac{j\left(\frac{j+n}{2} -1\right)}{2\ell+ n} 
\sup_{\|Z_F^j\|_{\infty} \leq 1}  
\tr(Z_F^j M_a)\\
&\leq 
\frac{j\left(\frac{j+n}{2} -1\right)}{2\ell+ n}\ , \label{eq:individualj}
\end{align}
where the first inequality follows from Lemma~\ref{thm:realMetaFinettiHarmonic}, and the second
from the fact that $M_a \geq 0$, $\tr(M_a) = 1$ and hence $\tr(Z_F^j M_a) \leq 1$.
Using that $\tr(Z_F^{\perp} M_a) = \tr(Z_F^{\perp} \tilde{M}_a) =0$ we can thus bound
\begin{align}
	\sup_{\|\bar{Z}_F\|_{\infty} \leq 1} \tr\left(Z_F (M_a - \tilde{M}_a)\right)
&=
\sup_{\|\bar{Z}_F\|_{\infty} \leq 1} \sum_j \tr\left(Z_F^j (M_a - \tilde{M}_a)\right)\\
& \leq 
\sum_j \sup_{\|Z_F^j\|_{\infty} \leq 1} \tr\left(Z_F^j (M_a - \tilde{M}_a)\right)\\
&\leq
\sum_j \frac{j\left(\frac{j+n}{2} -1\right)}{2\ell+ n} \ ,
\end{align}
where the first inequality comes from the convexity of the norm, and the second from~\eqref{eq:individualj}.
Our claim now follows by noting that the r.h.s is increasing in $j$ and only even $j$ with $j \leq 2a$ appear in the sum.
\end{proof}

\section{Hierarchy of semidefinite programming relaxations}\label{sec:using}

We are now finally ready to return to our problem of optimizing $T(x)$ over the sphere.
Recall from Section~\ref{sec:evenoddDegree} that it is sufficient for us to understand how to optimize homogeneous polynomials $T(x)$ of \emph{even} degree $d=2a$.
This will be done using a hierarchy of semi-definite programs of the kind introduced for polynomial optimization problems by Parrilo~\cite{parrilo} and Laserre~\cite{lasserre}. We will determine the performance of our hierarchy of optimizations at a given level of the hierarchy. We will just describe the case of the hypersphere, although the result for the complex projective space is straightforward. 
%The optimisation on complex projective space is exactly similar. \andrew{actually perhaps not, suspect they are all eigenvalue problems, maybe should write this out}
%\steph{yes, especially since we dont have the complex de finetti in the form you use the real one ;) (although its true) }

\subsection{Semidefinite programming relaxations of polynomial optimization}

The key to the semidefinite programming relaxations of~\cite{lasserre,parrilo} is to note that one can formulate our optimization problem in terms of truncated moment matrices.
More precisely, the problem of optimizing $T(x)$ over the sphere can be cast as an equivalent problem of optimizing
\begin{sdp}{maximize}{$\tr(Z_T M_a)$}
	& $M_a \in \mathcal{M}_{+}(a)$\ ,
\end{sdp}
where $Z_T$ is the polynomial matrix of $T(x)$ as defined in Section~\ref{sec:realpoly} and $\mathcal{M}_{+}(a)$ is the set of truncated moment matrices from Section~\ref{sec:momentsreal}.
Unfortunately, this reformulation does not by itself enable us to solve the problem since characterizing the set of true truncated moment matrices $\mathcal{M}_{+}(a)$ is 
very difficult. Yet, this rewriting suggests a very natural hierarchy of semidefinite programs in which we relax the constraint that $M_a \in \mathcal{M}_{+}(a)$. 

As we have seen in Section~\ref{sec:momentsreal}, we can define interesting sets of matrices $\mathcal{M}(a,\ell)$ for any $\ell \geq a$ 
that contain all the truncated moment matrices in $\mathcal{M}_{+}(a)$. Unlike $\mathcal{M}_{+}(a)$, checking membership of $\mathcal{M}(a,\ell)$ is a semidefinite programming feasibility problem and is thus computationally tractable. 
This observation will allow us to write semidefinite programs that place upper bounds on our polynomial optimization. 
To see that these bounds are actually meaningful, note that 
we know from the de Finetti theorem of Section~\ref{sec:deFinetti} that for large $\ell$ there does exist a measure $\mu$ such that $\tilde{M}_a(\mu)\approx M_a$ when $M_a\in \mathcal{M}(a,\ell)$.
That is, there exists a true truncated moment matrix $\tilde{M}_a(\mu) \in \mathcal{M}_{+}(a)$ that closely approximates $M_a$.
This second observation will allow us to show that the optimum of these semidefinite programs is close to the true optimum $\nu$ for large $\ell$.

We define our level $\ell$ relaxation of the polynomial optimization problem as follows
\begin{sdp}{maximize}{${\rm Tr}(Z_TM_a)$}
	& $M_a \in \mathcal{M} (a,\ell)$\ ,\\
\end{sdp}
%where $Z_T$ is the matrix given corresponding to $T(x)$ as given in Section \ref{sec:realpoly} and 
where $\mathcal{M}(a,\ell)$ is defined in Section \ref{sec:momentsreal}. 
We will refer to the optimum value of this semidefinite program at level $\ell$ as $\nu_\ell$ and use $M^*_a$ to denote an optimal solution, i.e.,
$\tr(Z_TM_a^*) = \nu_\ell$. Note that any feasible solution $M_a$ at level $\ell$ admits a very nice operational interpretation due to the fact that
any maximally symmetric matrix $M_a$ admits a $P$-representation (see Section~\ref{sec:PvsQ}).
More precisely, 
recall that from equation~(\ref{eq:ppolyevalreal}) we have ${\rm Tr}(Z_TM_a)=\int_{S^{n-1}}T(x)P_{M_a}(x)dx$. That is, the term $\tr(Z_F M_a)$ corresponds
to an evaluation of the polynomial $T(x)$ at points $x$ on the sphere weighted by $P_{M_a}(x)dx$. These weights are not necessarily positive but 
if $M_a$ were actually a truncated moment matrix $M_a\in \mathcal{M}_{+}(a)$ then $P_{M_a}(x)dx$ is a normalised probability measure on the hypersphere. 
Since $\mathcal{M}_{+}(a) \subset \mathcal{M}(a,\ell)$ we have
\begin{equation}
\nu_\ell \geq \nu
\end{equation}
and so $\nu_\ell$ is a global upper bound on the polynomial $T(x)$. This upper bound on $\nu$ was originally emphasised by Laserre and Parrilo~\cite{parrilo,lasserre}.

Let us now write our relaxation explicitly as a semidefinite program based on the definition of $\mathcal{M}(a,\ell)$.
Define the degree $2\ell$ polynomial $T'(x)=r^{2(\ell-d)}(x)T(x)$ and let $Z_{T'}$ denote its polynomial matrix.  
Introducing the variable $M\in \End(\Sym(V^{\otimes \ell}))$,
we can then write our relaxations explicitly as
\begin{sdp}{maximize}{${\rm Tr}(Z_{T'}M)$}
	& $M \geq 0 $\ , \\
	& $\tr(M) = 1  $\ , \\
	& $(I-\Pi_{2\ell})|M\rangle = 0$\ .
\end{sdp}
To see the relation to the previous formulation, note that the conditions on $M$ guarantee that $M_a= \tr_{\downarrow a}(M)\in \mathcal{M}(a,\ell)$ which follows from the definition in Section \ref{sec:momentsreal}. On the one hand for all $M_a\in \mathcal{M}(a,\ell)$ there exists some $M\in \maxsymEll$ such that $M_a = \tr_{\downarrow a}(M)$. 
Thus the feasibility set of the semidefinite program corresponds to the feasibility set $\mathcal{M}(a,\ell)$ of our relaxation. On the other hand equation (\ref{eq:realpartial}) implies that $\tr(Z_{T'}M)=\tr(Z_{T}M_a)$ and so the objective function of the two optimizations are also identical. 

The set of semidefinite programming relaxations correspond to the moment matrix methods of Laserre~\cite{lasserre}. The dual semidefinite program~\cite{boyd:book} has an interpretation in terms of an optimization over sum of squares of decompositions~\cite{parrilo}. It is straightforward to compute this dual semidefinite program. 
The variables are a scalar $t$ and a matrix $\bar{Z}\in \End(\Sym(V^{\otimes \ell}))$ and the SDP is as follows
\begin{sdp}{minimize}{$t$}
	& $tI-Z_{T'}+\bar{Z} \geq 0 $\ , \\
	& $\Pi_{2\ell}|\bar{Z}\rangle = 0$ \ .
\end{sdp}
Comparing the conditions of this dual SDP to the sets of polynomials having sum of squares decompositions introduced in Section \ref{sec:sosreal} we find that we may interpret this semi-definite program as follows
\begin{sdp}{minimize}{$t$}
	& $ t-T(x) \in \mathcal{S} (a,\ell)$
\end{sdp}
which corresponds to the well-known duality between the Laserre and Parrilo points of view. This set of relaxations for the problem of optimising homogenous polynomials of even degree over the sphere is discussed in~\cite{fayb}.
This dual semidefinite program can be seen to be strictly feasible~\cite{boyd:book}, since we can choose a feasible point $\bar{t},\bar{Z}$ where $\bar{Z}=0$  and $\bar{t}>0$ is larger than the magnitude of the largest negative eigenvalue of $Z_{T'}$. At this feasible point we have $tI-Z_{T'}+\bar{Z} > 0$ and thus the semidefinite program is strictly feasible. As a result the optimum of the dual semidefinite program is equal to the primal optimum $\nu_\ell$~\cite[Theorem 3.1]{boyd:book}. 

\subsubsection{Complexity of the relaxation}

We will briefly consider the computation required to implement our relaxation at level $\ell$, this discussion closely matches~\cite{parrilo}. Note that the matrices act on the space  $\Sym((\Real^n)^{\otimes \ell})$ and therefore have dimension
\begin{align}
p={\ell + n  - 1 \choose \ell} \leq (\ell + 1)^n\ .
\end{align}
We have written the primal SDP of our relaxation in the so-called {\it normal form}~\cite{boyd:book}, which involves linear constraints on a matrix variable $M$. These constraints project the variable $M$, which has   $(p^2+p)/2$ independent entries, onto a subspace of dimension ${2\ell + n  - 1 \choose 2\ell}$ that is of much smaller dimension. As a result, for numerical implementation one would solve these matrix equalities to end up with a semidefinite program in the so-called {\it inequality form}~\cite{boyd:book} with $q={2\ell + n  - 1 \choose 2\ell}$ scalar variables. The complexity of obtaining approximate solutions to the SDP then scales no worse than $p^{5/2}q^2$~\cite{boyd:book,vandenbergheboyd}. For a fixed dimension of the hypersphere $n$ this scaling is polynomial in $\ell$ and of order $\ell^{9n/2}$. 
If one instead regards the level $\ell$ of the hierarchy as a constant, i.e., 
then the scaling with the number of variables $n$ is polynomial in $\ell$. We have $p\leq n^{\ell+1}$ and $q\leq n^{2\ell+1}$ and so the overall scaling is  $n^{13\ell/2+9/2}$. However if both $n$ and $\ell$ are growing with problem size then the computation require does grow exponentially.

\subsection{Performance of the relaxation}
Having solved the relaxation at level $\ell$ we would like to know how close we are to the optimal solution $\nu$.
More precisely, having obtained an optimal matrix $M^*_a\in \mathcal{M}(a,\ell)$ such that $\tr(M^*_aZ_T)=\nu_\ell$, we would like to know how close $\nu_\ell$ is to $\nu$. 
The key to relating $\nu_\ell$ to $\nu$ is given by our real valued de Finetti theorem (Theorem~\ref{thm:definettiF1}).
This theorem implies that for large $\ell$, the matrix $M^*_a$ is very close to a truncated moment matrix $\tilde{M}^*_a(\mu)$ 
and as a result must be close to an achievable value of $T(x)$. Specifically there exists a representing probability measure $\mu(x) dx$ and a corresponding moment matrix $\tilde{M}^*_a(\mu)$ that is approximately equal to $M_a$. From equation (\ref{eq:polyav}) we have for sufficiently large $\ell$ that
\begin{equation}
\nu\geq \tilde{\nu}_\ell = \int_{S^{n-1}} T(x) d\mu(x) = \tr(\tilde{M}^*_a(\mu)Z_T) \approx \tr(M^*_a Z_T)=\nu_\ell.
\end{equation}
Since $\nu_\ell\geq \nu$ for all levels of the relaxation, we can conclude that for large $\ell$ we have $\nu_\ell \approx \nu$. 
What is more, this measure can be determined explicitly in terms of the Fourier coefficients of the $P$-representation of $M_a^*$. Concretely, 
the Fourier coefficient $p_{jm}$ of the $P$-representation can be found
by computing $\tr(M_a^* S_{jm})$. 
In the following we derive an explicit bound on $\nu$ resulting from $\nu_\ell$. The condition on $\ell$ ensures that the approximation parameter $\epsilon(a,\ell,n) \leq 1$.

\begin{theorem}\label{thm:convergence}
	Let $\ell \geq 2a^2 \left(a + \frac{n}{2} - 1\right) - \frac{n}{2}$, and let $T(x)$ be a homogeneous polynomial of degree $2a$ with $x \in S^{n-1}$.
	Let $\nu_\ell$ be the optimum of the SDP relaxation at level $\ell$ and $\nu$ the true optimum of the homogenous polynomial $T(x)$ over the hypersphere $S^{n-1}$. 
	 %Then we have 
	%\begin{align}
	%\label{eq:boundstheorem}
		%\nu_{\ell}-\epsilon(a,\ell,n)\nu \leq \nu\leq \nu_{\ell}
	%T\end{align}
Then the relative error in approximation is
\begin{align}
\label{eq:relerror}
		\frac{|\nu_\ell-\nu|}{|\nu|}\leq \epsilon(a,\ell,n)\ , 
	\end{align}
where
\begin{align}
		\epsilon(a,\ell,n) =  
 		\left(\frac{4a^2\left(a + \frac{n}{2} -1\right)}{2\ell+ n}\right)\ .
\end{align}
\end{theorem}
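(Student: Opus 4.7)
The plan is to combine three ingredients: (i) the easy direction $\nu_\ell \geq \nu$ coming from the fact that the SDP is a relaxation of optimisation over true moment matrices, (ii) the real de Finetti theorem (Theorem~\ref{thm:definettiF1}), which converts the optimal SDP matrix into a genuine moment matrix that furnishes an achievable value of $T$, and (iii) the fact that the de Finetti error is measured in precisely the $F1$-norm, which pairs cleanly with polynomial matrices $Z_T$.

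First I would fix an optimal $M_a^\star \in \mathcal{M}(a,\ell)$ with $\tr(Z_T M_a^\star) = \nu_\ell$ and, by definition of $\mathcal{M}(a,\ell)$, a maximally symmetric extension $M^\star \in \maxsymEll$ with $M^\star \geq 0$, $\tr(M^\star)=1$, and $\tr_{\downarrow a}(M^\star) = M_a^\star$. Applying Theorem~\ref{thm:definettiF1} to $M^\star$ (whose hypothesis $\ell \geq 2a^2(a+n/2-1) - n/2$ is exactly the one appearing in the present statement, ensuring the de Finetti error is at most $1$) produces a state $\tilde M_a^\star$ with a positive $P$-representation, so that $d\mu^\star(x) := P_{\tilde M_a^\star}(x)\, dx$ is a genuine probability measure on $S^{n-1}$, and
\begin{align*}
\|M_a^\star - \tilde M_a^\star\|_{F1} \;\leq\; \frac{4a^2\bigl(a + \tfrac{n}{2} - 1\bigr)}{2\ell + n} \;=\; \epsilon(a,\ell,n).
\end{align*}

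Since $\tilde M_a^\star \in \mathcal{M}_+(a)$, equation~\eqref{eq:polyav} gives $\tr(Z_T \tilde M_a^\star) = \int_{S^{n-1}} T(x)\, d\mu^\star(x) \leq \nu$. Combining this with $\nu_\ell \geq \nu$ and the defining property $|\tr(Z_F A)| \leq \|F\|_\infty \|A\|_{F1}$ of the $F1$-norm,
\begin{align*}
0 \;\leq\; \nu_\ell - \nu \;\leq\; \nu_\ell - \tr(Z_T \tilde M_a^\star) \;=\; \tr\!\left[Z_T\bigl(M_a^\star - \tilde M_a^\star\bigr)\right] \;\leq\; \|T\|_\infty\, \epsilon(a,\ell,n).
\end{align*}

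Finally, to upgrade this absolute bound to the relative bound in the statement, I would use that shifting $T(x) \to T(x) + c\, r(x)^{2a}$ leaves $\nu_\ell - \nu$ invariant on the sphere (both sides shift by $c$, since $r(x)^{2a}=1$ on $S^{n-1}$ and $\tr(M)=1$ for every feasible $M$), so we may assume without loss of generality that $\min_{x\in S^{n-1}} T(x) = 0$, in which case $\|T\|_\infty = \nu$. The previous display then reads $|\nu_\ell - \nu| \leq \epsilon(a,\ell,n)\, |\nu|$, which is the claim. The conceptual heavy lifting is already packaged into Theorem~\ref{thm:definettiF1}; the only mild subtlety here is the shift trick that passes from an absolute error controlled by $\|T\|_\infty$ to a relative error controlled by $|\nu|$, and making sure the positive $P$-representation really does yield a probability measure against which $T$ can be averaged (which is immediate from Lemma~\ref{lem:approxM}).
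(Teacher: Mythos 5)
Your argument tracks the paper's proof closely through the step
\begin{align*}
0 \;\leq\; \nu_\ell - \nu \;\leq\; \tr\!\left[Z_T\bigl(M_a^\star - \tilde M_a^\star\bigr)\right] \;\leq\; \|T\|_\infty\,\epsilon(a,\ell,n)\ ,
\end{align*}
using the same ingredients (the relaxation inequality $\nu\le\nu_\ell$, Theorem~\ref{thm:definettiF1}, Lemma~\ref{lem:approxM}, and the defining property of the $F1$-norm). Where you depart is the final step: the paper simply writes $\|T\|_\infty=|\nu|$ and stops, while you introduce a shift trick to justify it. That is where the gap lies.

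The shift $T\mapsto T+c\,r^{2a}$ does leave $\nu_\ell-\nu$ invariant (your reasoning that $\tr(Z_{r^{2a}}M_a)=1$ for any feasible $M_a$ is correct), and it also leaves $\epsilon(a,\ell,n)$ invariant. But it does \emph{not} leave $|\nu|$ invariant, and the quantity you are trying to bound is $|\nu_\ell-\nu|/|\nu|$. So you cannot ``assume without loss of generality that $\min T=0$'': after the shift, the display $|\nu_\ell-\nu|\le\epsilon\,|\nu|$ refers to the \emph{shifted} polynomial's maximum, which is $\nu-\nu_{\min}$ for the original $T$, not $\nu$. Undoing the shift, what your argument actually proves is $|\nu_\ell-\nu|\le\epsilon\,(\nu-\nu_{\min})$ — which is exactly the subsequent Corollary~\ref{cor:convergence}, not Theorem~\ref{thm:convergence}. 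The shift trick is appropriate for a shift-invariant statement such as the corollary's but cannot manufacture a bound in terms of $|\nu|$ alone. For what it is worth, the paper's own step is also terse here: the identity $\|T\|_\infty=|\nu|$ used in the paper's proof requires $|\nu|\ge|\nu_{\min}|$, i.e.\ that the maximum of $T$ dominates its minimum in absolute value, which does not hold for an arbitrary homogeneous $T$ (e.g.\ $T=x_1^4-2x_2^4$ on $S^1$). So the relative-error form of the theorem holds, via either your argument or the paper's, only under a normalization such as $\min_{S^{n-1}}T\ge 0$ or $|\nu|\ge|\nu_{\min}|$, and you should either impose that hypothesis explicitly or instead aim at the $\nu-\nu_{\min}$ bound of Corollary~\ref{cor:convergence}, which your shift trick does cleanly establish.
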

\begin{proof}
	The upper bound on $\nu$, $\nu\leq \nu_\ell$ is true for all $\ell$.
	Since $\nu_\ell$ is a solution of the semidefinite program there is an $M^*_a\in \mathcal{M}(a,\ell)$ with $\tr(Z_TM_a^*)=\nu_\ell$. 
Noting that $\|T\|_{\infty} = |\nu|$ we have by Theorem \ref{thm:definettiF1} that there exists a measure $\mu(x)dx$ on $S^{n-1}$ normalized to unity, and an associated truncated moment matrix $\tilde{M}^*_a(\mu)$ such that
\begin{equation}
\label{eq:definetti1}
|\nu_\ell-\tilde{\nu}_\ell|=\left|\tr\left[Z_T\left(M_a^* - \tilde{M}^*_a(\mu)\right)\right] \right|\leq 
\epsilon(a,\ell,n)|\nu|
\end{equation}
where $\epsilon(a,\ell,n)$ is as defined above and $\tilde{\nu}_\ell = \int_{S^{n-1}} T(x) \mu(x)dx=\tr(\tilde{M}^*_a(\mu)Z_T)$ (recall~\eqref{eq:polyav}). 
We thus have
\begin{equation}
\label{eq:bound}
\tilde{\nu}_\ell = \int_{S^{n-1}} T(x) \mu(x)dx \leq \nu \int_{S^{n-1}}\mu(x)dx = \nu \leq \nu_\ell\ ,
\end{equation}
where the second equality is just the normalisation of the measure.
As a result, from equation (\ref{eq:definetti1}) and~\eqref{eq:bound} we have
\begin{equation}
0 \leq \nu_\ell - \nu \leq \nu_\ell-\tilde{\nu}_\ell \leq \epsilon(a,\ell,n) |\nu| \ ,
\end{equation}
which is gives the relative error~\eqref{eq:relerror}. 
	\end{proof}

This result allows us to approximate the maximum of the polynomial $T(x)$ arbitrarily accurately by increasing $\ell$. We will now discuss the quality of this optimisation in terms more closely related to the standard conventions in the literature~\cite{deklerk}. In particular it is usual to measure the accuracy of approximation relative to $\nu-\nu_{\rm min}$ where $\nu_{\rm min}$ is the minimum value of $T(x)$ on the hypersphere. Specifically, we will consider only homogenous polynomials of even degree in order to compare to~\cite{fayb,reznik}. In that case we know that $T(0)=0$ and hence $\nu_{\rm min}\leq 0$ and so the bound on the relative error proven above is sufficient to obtain bounds on the usual measure of error as follows (since $\nu\leq \nu- \nu_{\rm min}$)
\begin{corollary}\label{cor:convergence}
	Let $\ell > \ell_0=2a^2 \left(a + \frac{n}{2} - 1\right) - \frac{n}{2}$, and let $T(x)$ be a homogeneous polynomial of degree $2a$ with $x \in S^{n-1}$.
	Let $\nu_\ell$ be the optimum of the SDP relaxation at level $\ell$ and $\nu$ the maximum of the homogenous polynomial $T(x)$ over the hypersphere $S^{n-1}$ and $\nu_{\rm min}$ be the minimum.
	 %Then we have 
	%\begin{align}
	%\label{eq:boundstheorem}
		%\nu_{\ell}-\epsilon(a,\ell,n)\nu \leq \nu\leq \nu_{\ell}
	%T\end{align}
Then the error in approximation for the relaxation at level $\ell$ satisfies
\begin{align}
%\label{eq:relerror}
		|\nu_\ell-\nu| \leq \epsilon(a,\ell,n) (\nu-\nu_{\rm min})\ , 
	\end{align}
where
\begin{align}
		\epsilon(a,\ell,n) =  
 		\left(\frac{4a^2\left(a + \frac{n}{2} -1\right)}{2\ell+ n}\right)\ .
\end{align}
\end{corollary}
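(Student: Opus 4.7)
The plan is to reduce this to Theorem~\ref{thm:convergence} by shifting the polynomial so that its minimum over the sphere becomes zero. Theorem~\ref{thm:convergence} already provides $|\nu_\ell - \nu| \leq \epsilon(a,\ell,n)\,|\nu|$, so my only task is to replace $|\nu|$ by $\nu - \nu_{\rm min}$. Note that in general one cannot conclude $\nu_{\rm min} \leq 0$ from the fact that $T$ is homogeneous of even degree — the polynomial $T(x) = (x_1^2+x_2^2)^2 + x_3^4$ on $S^2$ has $\nu_{\rm min} = 1/2$, for example — so a direct comparison is insufficient and some form of shifting is required.

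I would introduce the auxiliary polynomial $\tilde T(x) := T(x) - \nu_{\rm min}\, r(x)^{2a}$, which is still homogeneous of degree exactly $2a$. On the sphere $r(x)^{2a} = 1$, so $\tilde T(x) = T(x) - \nu_{\rm min} \geq 0$, and the maximum of $\tilde T$ on $S^{n-1}$ is $\tilde\nu = \nu - \nu_{\rm min} \geq 0$ while its minimum is $0$. The key observation is that the SDP objective for $\tilde T$ is an affine shift of that for $T$: for any feasible $M_a \in \mathcal{M}(a,\ell)$, the $P$-representation guaranteed by Lemma~\ref{lem:hasPrepresentation}, together with the normalization $\tr(M_a) = 1$, yields
\begin{align*}
\tr(M_a Z_{r^{2a}}) = \ints P_{M_a}(x)\, r(x)^{2a}\, dx = \ints P_{M_a}(x)\, dx = 1,
\end{align*}
so that $\tr(M_a Z_{\tilde T}) = \tr(M_a Z_T) - \nu_{\rm min}$, and passing to the supremum over $\mathcal{M}(a,\ell)$ gives $\tilde\nu_\ell = \nu_\ell - \nu_{\rm min}$.

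Finally I would apply Theorem~\ref{thm:convergence} to $\tilde T$ (which satisfies the same hypotheses since $a$, $n$, and $\ell$ are unchanged) to obtain $|\tilde\nu_\ell - \tilde\nu| \leq \epsilon(a,\ell,n)\,|\tilde\nu| = \epsilon(a,\ell,n)(\nu - \nu_{\rm min})$, where the equality uses $\tilde\nu \geq 0$. Since $\tilde\nu_\ell - \tilde\nu = \nu_\ell - \nu$, this is exactly the claimed inequality. I do not anticipate any real obstacle: the only point requiring care is the affine-shift identity $\tr(M_a Z_{r^{2a}}) = 1$, which simply expresses that $r(x)^{2a}$ is the constant function $1$ on the sphere and integrates trivially against any normalized quasi-measure; the rest is bookkeeping.
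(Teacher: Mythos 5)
Your proof is correct, and it in fact repairs a gap in the paper's own argument. The paper's justification is the remark immediately preceding the corollary: since $T(0)=0$, one has $\nu_{\rm min}\leq 0$ and hence $\nu\leq\nu-\nu_{\rm min}$. But $\nu_{\rm min}$ is the minimum of $T$ over the hypersphere, and the origin does not lie on $S^{n-1}$; your counterexample $T=(x_1^2+x_2^2)^2+x_3^4$ on $S^2$, for which $\nu_{\rm min}=1/2>0$, shows that both the inference and the needed inequality $|\nu|\leq\nu-\nu_{\rm min}$ can fail. (The paper's step would be sound if $\nu_{\rm min}$ were the minimum over the ball rather than the sphere, but that is not what the statement says, and your sphere-minimum bound is the stronger one in any case.) Your shift to $\tilde{T} = T-\nu_{\rm min}\,r^{2a}$ is the natural fix: $\tilde{T}$ remains homogeneous of degree $2a$, is nonnegative on $S^{n-1}$ so that $\|\tilde{T}\|_\infty=\tilde{\nu}=\nu-\nu_{\rm min}$ holds without any sign hypothesis on $\nu$ (the paper's step also implicitly assumes $\nu\geq0$), and the identity $\tr(M_a Z_{r^{2a}})=1$ for feasible $M_a$, which you derive correctly from the $P$-representation and $\tr(M_a)=1$, gives $\tilde{\nu}_\ell-\tilde{\nu}=\nu_\ell-\nu$. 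Applying Theorem~\ref{thm:convergence} to $\tilde{T}$ then yields exactly the stated inequality. The paper's route is a one-liner when it applies; yours is correct unconditionally.
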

Thus the extension at level $\ell$ provides a $(1-\epsilon)$-approximation for $\nu$ in the sense of~\cite{deklerk}. This result corresponds precisely to Theorem 1 of~\cite{fayb} who proved it with the parameters $\ell_0=na(2a-1)/2\log 2 - n/2 -a$ and $\epsilon(a,\ell,n)=2an(2a-1)/[4\log2(l+n/2+a)-2an(2a-1)]$ using the main theorem of~\cite{reznik}. These parameters are very comparable, so for example in the regime where $\ell\gg n\gg a$ we find $\ell_0\simeq a^2n$ and $\epsilon \simeq a^2n/\ell$ while~\cite{fayb} has $\ell_0\simeq a^2n/\log 2$ and $\epsilon\simeq a^2 n/\log 2 \ell$ which is essentially the same. 

Notice that if we fix the accuracy $\epsilon$ to which we wish to work, and the degree of the polynomial $d=2a$ then we must use $\ell \simeq a^2n/\epsilon$ to achieve this accuracy. Since the required $\ell$ grows with $n$ the computation required to perform the necessary semidefinite program grows exponentially with $n$. Specifically we find that $p\simeq (a^2/\epsilon)^n$ and $q\simeq (2a^2/\epsilon)^n $  so the overall computation required scales like $2^{2n}(a^2/\epsilon)^{9n/2}$. As a result this is not a polynomial time approximation  scheme (PTAS)~\cite{deklerk}. As far as we know the question of whether such an approximation scheme exists for this set of problems is open. We emphasize, however, that our method has a number of advantages as outlined below.

\section{Discussion}

We have shown convergence of the SDP hierarchy of~\cite{lasserre,parrilo} for optimizing homogeneous polynomials on the hypersphere. 
The key ingredient was a so-called real valued de Finetti theorem, inspired by tools used in quantum information theory. 
Our approach allowed us to intepret the actual solution of the SDP hierarchy at level $\ell$ as the evaluation of the polynomial $T(x)$ at an affine
% Steph: andrew, I really did mean affine. convex means all coefficients are positive
combination of points on the sphere. This combination was determined by the so-called $P$-representation $P_M(x)$. 
We could then find an explicit representing measure (i.e., a convex combination of points on the sphere) 
by comparing the Fourier coefficients of $P_M(x)$ with those of the so-called $Q$-representation $Q_M(x)$ which is always positive. 
Whereas our approach may seem somewhat foreign at first glance compared
to prior approaches, we find it rather beautiful in that it not only allowed us to interpret
solutions at any level $\ell$, but also gave a very natural way to obtain an approximate representing measure.
Relations between such $P$ and $Q$-representations are employed in quantum optics, which served as one of the inspirations of our proof.

It is interesting to note that our approach bears similarities to the proof by Laurent~\cite{monique} of the result of Curto and Fialkow
for the case of $\ell \rightarrow \infty$, and partially inspired our proof. In particular, one can loosely understand
part of the approach of~\cite{monique} as a ``measurement'' of the (infinite) moment matrix by interpolation polynomials. In spirit, this is loosely similar
to the ``measurement'' that we perform here to define the $Q$-function $Q_M(x)$. 

One may wonder whether other de Finetti theorems can be proven this way. It turns out that this is indeed the case, for example it is possible to recover the
usual quantum mechanical de Finetti theorem of~\cite{matthias:definetti} with similar approximation parameters using our techniques based on spherical harmonics.
Using the appropriate spherical harmonics for complex projective spaces an analogue of the Funk-Hecke formula~\cite{takeuchi} can be
found. 
It should be noted that the necessary convolution theorems analogous to the Funk-Hecke formula can in principle also be found for many other Riemannian symmetric 
spaces following~\cite{takeuchi}, opening the door to a whole new class of de Finetti theorems.
In turn, this would enable a proof of convergence for the optimization of polynomials over other Riemannian symmetric spaces.

\bibliographystyle{amsplain}
\bibliography{q}

\appendix

In this appendix we provide some of the more technical components of the real De Finetti theorem and our convergence results.

\section{Technical lemmas}\label{app:technical}

In our proof, we will need properties of spherical harmonics expressed with the help of so-called Gegenbauer polynomials $P_j: \Real \rightarrow \Real$. These can be expressed in terms of the Rodrigues' formula~\cite{mueller} 
as
\begin{align}\label{eq:gegenbauer}
	P_j(t) = \left(-\frac{1}{2}\right)^j \frac{\gfrac{n-1}{2}}{\gam{j + \frac{n-1}{2}}} (1-t^2)^{-\left(\frac{n-3}{2}\right)} \left(\frac{d}{dt}\right)^j (1-t^2)^{j + \frac{n-3}{2}}\ .
\end{align}
In particular, we will later employ the Funk-Hecke formula~\cite{mueller} which involves parameters $\lambda(n,\ell,j)$ as given in~\eqref{eq:oldLambda}. As these are rather unwieldy by itself, we simplify them using the following lemma.
\begin{lemma}\label{lem:simplifyLambda}
	Let $\ell, j,n \in \Natural$ and $n\geq 3$, then 
	\begin{align}\label{eq:oldLambda}
		\lambda(n,\ell,j) := \int_{-1}^{1} t^{2\ell} P_j(t) (1-t^2)^{\frac{n-3}{2}} dt
	\end{align}
	is given by
	\begin{align}
		\lambda(n,\ell,j) = \left\{
		\begin{array}{ll}
			\frac{\sqrt{\pi}}{2^{2\ell}} \frac{\gfrac{n-1}{2} \gam{2\ell + 1}}{\gam{\ell + 1 - \frac{j}{2}} \gam{\ell + \frac{n+j}{2}}} & \mbox{if } j \mbox{ is even and } j \leq 2\ell\ ,\\[2mm]
			0 & \mbox{if } j \mbox{ is odd or } j > 2\ell\ ,
		\end{array} \right.  \end{align}
	where $P_j(t)$ is the Gegenbauer polynomial given by the Rodrigues' formula of~\eqref{eq:gegenbauer}.
\end{lemma}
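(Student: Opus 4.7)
The plan is to substitute the Rodrigues formula for $P_j(t)$, integrate by parts $j$ times to move the derivatives off $(1-t^2)^{j+(n-3)/2}$ and onto $t^{2\ell}$, and then evaluate the resulting integral as a Beta function. The vanishing cases and the final closed form will then follow from elementary parity and the Legendre duplication formula.

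First, I would substitute \eqref{eq:gegenbauer}, noting that the factor $(1-t^2)^{-(n-3)/2}$ cancels against the weight, to obtain
\begin{align*}
\lambda(n,\ell,j) = \left(-\tfrac{1}{2}\right)^j \frac{\gfrac{n-1}{2}}{\gam{j+\tfrac{n-1}{2}}} \int_{-1}^{1} t^{2\ell}\left(\tfrac{d}{dt}\right)^j (1-t^2)^{j+\tfrac{n-3}{2}}\,dt.
\end{align*}
Next I would integrate by parts $j$ times. Because $(1-t^2)^{j+(n-3)/2}$ has a zero of order $j+(n-3)/2$ at $t=\pm 1$ (and $n\geq 3$ ensures the exponent is large enough that all intermediate derivatives up to order $j-1$ still vanish at the endpoints), every boundary term is zero, and the $j$ factors of $-1$ from the by-parts operation cancel the $(-1)^j$ in front, giving
\begin{align*}
\lambda(n,\ell,j) = \frac{1}{2^j}\frac{\gfrac{n-1}{2}}{\gam{j+\tfrac{n-1}{2}}}\int_{-1}^{1}\!\left[\left(\tfrac{d}{dt}\right)^j t^{2\ell}\right](1-t^2)^{j+\tfrac{n-3}{2}}\,dt.
\end{align*}

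From here the two trivial cases fall out immediately: if $j>2\ell$, then $(d/dt)^j t^{2\ell}=0$; and if $j$ is odd, then $(d/dt)^j t^{2\ell}$ is an odd power of $t$, so multiplying by the even weight $(1-t^2)^{j+(n-3)/2}$ gives an odd integrand on $[-1,1]$ whose integral vanishes. For $j$ even with $j\leq 2\ell$, one has $(d/dt)^j t^{2\ell} = \gam{2\ell+1}/\gam{2\ell-j+1}\,t^{2\ell-j}$, and the substitution $u=t^2$ converts the resulting integral to a Beta function:
\begin{align*}
\int_{-1}^{1} t^{2\ell-j}(1-t^2)^{j+\tfrac{n-3}{2}}\,dt = B\!\left(\ell-\tfrac{j}{2}+\tfrac{1}{2},\ j+\tfrac{n-1}{2}\right) = \frac{\gam{\ell-\tfrac{j}{2}+\tfrac{1}{2}}\gam{j+\tfrac{n-1}{2}}}{\gam{\ell+\tfrac{n+j}{2}}}.
\end{align*}

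Assembling the pieces cancels the $\gam{j+(n-1)/2}$ and yields
\begin{align*}
\lambda(n,\ell,j) = \frac{\gfrac{n-1}{2}\,\gam{2\ell+1}\,\gam{\ell-\tfrac{j}{2}+\tfrac{1}{2}}}{2^j\,\gam{2\ell-j+1}\,\gam{\ell+\tfrac{n+j}{2}}}.
\end{align*}
The final step is to match this against the claimed expression; setting $m:=\ell-j/2$, this reduces to verifying $2^{2m}\,\gam{m+\tfrac{1}{2}}\gam{m+1}=\sqrt{\pi}\,\gam{2m+1}$, which is precisely the Legendre duplication formula for the Gamma function. I do not expect any real obstacle: the only slightly delicate point is ensuring all boundary terms in the repeated integration by parts vanish, which requires $n\geq 3$ (hypothesis of the lemma) so that the exponent $j+(n-3)/2$ is large enough to kill the boundary contributions at every stage.
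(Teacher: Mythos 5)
Your proof is correct and follows essentially the same route as the paper: integration by parts $j$ times (the paper cites Müller's Lemma 11 where you spell out the boundary-vanishing argument), parity and degree arguments for the vanishing cases, evaluation of the remaining integral as a Beta function, and finally Legendre's duplication formula to arrive at the stated closed form.
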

\begin{proof}
	Using integration by parts, it can be shown that~\cite[Lemma 11]{mueller}
	\begin{align}\label{eq:gegenStep1}
		\int_{-1}^{1} t^{2\ell} P_j(t) (1-t^2)^{\frac{n-3}{2}} dt = 
		\left(\frac{1}{2}\right)^j \frac{\gfrac{n-1}{2}}{\gam{j + \frac{n-1}{2}}}
		\int_{-1}^{1} \left(\frac{d}{dt}\right)^j(t^{2\ell}) (1-t^2)^{j + \frac{n-3}{2}} dt\ .
	\end{align}
	From $(d/dt)^j (t^{2\ell})$ we immediately see that $\lambda_j = 0$ for $j > 2\ell$.
	We may now simplify the right hand side, by noting that $(d/dt)^j(t^{2\ell}) = (2\ell)(2\ell-1)\ldots(2\ell - j + 1) t^{2\ell-j}$, to
	\begin{align}\label{eq:simplifiedLambda}
		\left(\frac{1}{2}\right)^j \frac{\gfrac{n-1}{2} \gam{2\ell + 1}}{\gam{j + \frac{n-1}{2}} \gam{2\ell + 1 - j}} 
		\int_{-1}^{1} t^{2\ell - j} (1-t^2)^{j + \frac{n-3}{2}} dt\ .
	\end{align}
	Let us now first consider the case when $j$ is odd.
	Note that in this case the function $f(t) := t^{2\ell - j}(1-t^2)^{j + \frac{n-3}{2}}$ obeys $f(-t) = - f(t)$. Hence, the integral vanishes and $\lambda_j = 0$.
	For the case when $j$ is even, we can use the fact that $f(t)$ is symmetric around the origin $t=0$ to write
	the integral in terms of the $\beta$-function to obtain (see e.g.~\cite[Equation 2.6]{decarli:norms} or~\cite[Section 10.4]{arfken})
	\begin{align}\label{eq:integralExpand}
		\int_{-1}^{1} t^{2\ell - j} (1-t^2)^{j + \frac{n-3}{2}} dt = \frac{\gam{\ell - \frac{j-1}{2}} \gam{j + \frac{n-1}{2}}}{\gam{\ell + \frac{n+j}{2}}}\ .
	\end{align}
	Combining~\eqref{eq:gegenStep1},~\eqref{eq:simplifiedLambda} and~\eqref{eq:integralExpand} we obtain
	\begin{align}
		\lambda(n,\ell,j) &= \left(\frac{1}{2}\right)^j \frac{\gfrac{n-1}{2} \gam{2\ell + 1}\gam{\ell - \frac{j-1}{2}}}{\gam{2\ell + 1 - j} \gam{\ell + \frac{n+j}{2}}}\ .
	\end{align}
	Noting that by Legendre's duplication formula we can write
	\begin{align}
		\frac{\gam{\ell - \frac{j-1}{2}}}{\gam{2\ell + 1 - j}} &= \frac{\gam{\ell - \frac{j-1}{2}}}{\gam{2\left(\ell - \frac{j-1}{2}\right)}}
		=\frac{\sqrt{\pi}}{2^{2\ell - j}} \frac{1}{\gam{\ell + 1 - \frac{j}{2}}}
	\end{align}
	then gives our claim.
\end{proof}

To form our approximations, we will need to compare ratios of the functions $\lambda(n,\ell,j)$ for larger values of $j$ to the one for $j=0$. Given our lemma above we obtain
the following corollary.

\begin{corollary}\label{cor:lambdaDiff}
	Let $\ell, j, n \in \Natural$, with $n \geq 3$ and $j$ even. Then
	\begin{align}
		\frac{\lambda(n,\ell,j)}{\lambda(n,\ell,0)} = \frac{\gam{\ell + 1}\gam{\ell + \frac{n}{2}}}{\gam{\ell + 1 - \frac{j}{2}} \gam{\ell + \frac{n+j}{2}}}\ .
	\end{align}
\end{corollary}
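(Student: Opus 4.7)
The plan is to deduce this ratio directly by substituting the closed-form expression provided by Lemma~\ref{lem:simplifyLambda} into both the numerator and denominator and cancelling the shared prefactors. Since $j$ is assumed even (and the identity to be shown implicitly presumes $j \leq 2\ell$, so that the argument $\ell + 1 - j/2$ of the gamma function is positive and the values on both sides are nonzero), the first case of the piecewise formula in Lemma~\ref{lem:simplifyLambda} applies to both $\lambda(n,\ell,j)$ and $\lambda(n,\ell,0)$.

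Concretely, I would first write
\begin{align*}
\lambda(n,\ell,j) &= \frac{\sqrt{\pi}}{2^{2\ell}}\, \frac{\gfrac{n-1}{2}\, \gam{2\ell+1}}{\gam{\ell + 1 - \tfrac{j}{2}}\, \gam{\ell + \tfrac{n+j}{2}}},
\end{align*}
and, specializing to $j=0$,
\begin{align*}
\lambda(n,\ell,0) &= \frac{\sqrt{\pi}}{2^{2\ell}}\, \frac{\gfrac{n-1}{2}\, \gam{2\ell+1}}{\gam{\ell + 1}\, \gam{\ell + \tfrac{n}{2}}}.
\end{align*}

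Next, I would simply form the quotient of these two expressions. The factors $\sqrt{\pi}/2^{2\ell}$, $\gfrac{n-1}{2}$, and $\gam{2\ell+1}$ appear identically in both and cancel exactly, leaving
\begin{align*}
\frac{\lambda(n,\ell,j)}{\lambda(n,\ell,0)} = \frac{\gam{\ell+1}\, \gam{\ell + \tfrac{n}{2}}}{\gam{\ell + 1 - \tfrac{j}{2}}\, \gam{\ell + \tfrac{n+j}{2}}},
\end{align*}
which is the claimed identity.

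There is no genuine obstacle here: the corollary is a one-line algebraic simplification of the preceding lemma, and the only point worth flagging is the implicit restriction $j \leq 2\ell$ needed so that all gamma-function arguments are positive. (This is the regime in which the corollary will be applied later, where $j$ ranges up to $2a < 2\ell$.)
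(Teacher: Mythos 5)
Your proposal is correct and is exactly the intended one-line derivation: the corollary is stated in the paper without any separate proof precisely because it follows immediately from Lemma~\ref{lem:simplifyLambda} by taking the quotient of the two closed-form expressions and cancelling the common prefactor $\sqrt{\pi}\,2^{-2\ell}\,\gfrac{n-1}{2}\,\gam{2\ell+1}$. Your remark that the implicit restriction $j \leq 2\ell$ is needed for the formula to be meaningful is also accurate and matches how the corollary is later applied.
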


Whereas such ratios can of course be evaluated for any given values of $n$,$\ell$ and $j$, it is less obvious what their asymptotic behaviour (large $\ell$) would be. The following lemma derives a lower bound on said ratio, for which the assymptotics are very easy to understand. This will be used in the proof of our real De Finetti theorem. 

\begin{lemma}\label{lem:epsilonBound}
	Let $\ell, j, n \in \Natural$, $n \geq 3$, and let 
	$2\ell \geq j \geq 2$ be even. Then 
	\begin{align}
		1 - \frac{\lambda(n,\ell,j)}{\lambda(n,\ell,0)} \leq \frac{j\left(\frac{j+n}{2} - 1\right)}{2\ell + n}\ .
	\end{align}
\end{lemma}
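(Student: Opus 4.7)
The plan is to convert the ratio of $\lambda$'s into a finite product via Gamma function identities, then apply the elementary inequality $\prod_i(1-x_i) \geq 1 - \sum_i x_i$ (valid for $x_i \in [0,1]$), and finally do a coarse estimate on each summand.

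First I would invoke Corollary~\ref{cor:lambdaDiff} and set $k := j/2$ (a positive integer since $j$ is even and $j \geq 2$). Using the telescoping identity $\Gamma(z+k)/\Gamma(z) = \prod_{i=0}^{k-1}(z+i)$, the ratio factors as
\begin{align*}
\frac{\lambda(n,\ell,j)}{\lambda(n,\ell,0)} = \prod_{i=0}^{k-1} \frac{\ell - i}{\ell + \tfrac{n}{2} + i} = \prod_{i=0}^{k-1}\left(1 - \frac{\tfrac{n}{2} + 2i}{\ell + \tfrac{n}{2} + i}\right).
\end{align*}
For the second equality I just subtract numerator from denominator. Note that each factor lies in $[0,1]$: positivity is clear from $k-1 \leq \ell - 1$ (which follows from $j \leq 2\ell$), and being $\leq 1$ is clear since $\tfrac{n}{2}+2i \leq \ell + \tfrac{n}{2}+i$ iff $i \leq \ell$.

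Next I would apply $1 - \prod_i(1-x_i) \leq \sum_i x_i$ (easily proved by induction for $x_i \in [0,1]$) to get
\begin{align*}
1 - \frac{\lambda(n,\ell,j)}{\lambda(n,\ell,0)} \leq \sum_{i=0}^{k-1} \frac{\tfrac{n}{2} + 2i}{\ell + \tfrac{n}{2} + i}.
\end{align*}
Since $\ell + \tfrac{n}{2} + i \geq \ell + \tfrac{n}{2} = \tfrac{2\ell + n}{2}$ for every $i \geq 0$, I can uniformly lower-bound the denominators and compute the arithmetic sum in the numerator:
\begin{align*}
\sum_{i=0}^{k-1} \frac{\tfrac{n}{2} + 2i}{\ell + \tfrac{n}{2} + i} \leq \frac{2}{2\ell + n}\sum_{i=0}^{k-1}\left(\tfrac{n}{2} + 2i\right) = \frac{2}{2\ell + n}\left(\tfrac{kn}{2} + k(k-1)\right) = \frac{k(2k + n - 2)}{2\ell + n}.
\end{align*}
Re-substituting $k = j/2$ gives $k(2k+n-2) = j\bigl(\tfrac{j+n}{2} - 1\bigr)$, which matches the claimed bound exactly.

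There is really no obstacle here — the only mild subtleties are (i) confirming that the ratio $x_i = (n/2 + 2i)/(\ell + n/2 + i)$ lies in $[0,1]$, which uses the hypothesis $j \leq 2\ell$, and (ii) choosing to bound denominators by $\ell + n/2$ rather than something tighter, which is exactly what is needed to make the sum telescope into the $2\ell + n$ denominator of the target. Everything else is routine Gamma-function algebra and the elementary product-sum inequality.
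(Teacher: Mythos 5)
Your proof is correct. It shares the paper's overall strategy---telescoping the Gamma quotient from Corollary~\ref{cor:lambdaDiff} into a $k$-fold product and then passing from the product to a first-order lower bound---but the two executions differ in a mild but real way. The paper pairs the $i$-th numerator with the $i$-th denominator in the natural order
\begin{align*}
\prod_{i=1}^{k}\frac{\ell+i-k}{\ell+\tfrac{n}{2}+i-1}\ ,
\end{align*}
observes via the elementary fact $x\le y \Rightarrow x/y\le (x+1)/(y+1)$ that the first factor is smallest, lower-bounds the product by that factor raised to the $k$-th power, and then applies Bernoulli's inequality $(1-x)^k\ge 1-kx$. You instead pair the largest numerator with the smallest denominator, write each factor as $1-x_i$ with $x_i=(n/2+2i)/(\ell+n/2+i)$, apply the general product--sum inequality $\prod(1-x_i)\ge 1-\sum x_i$ (which needs only $x_i\in[0,1]$, not any ordering of the factors), and finally uniformize only the denominators by $\ell+n/2$. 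Your route avoids the monotonicity argument for identifying the extremal factor, at the cost of summing an arithmetic progression rather than reading off a single term; both routes produce exactly the same constant $j\bigl(\tfrac{j+n}{2}-1\bigr)/(2\ell+n)$. Your verification that each $x_i\in[0,1]$ from $j\le 2\ell$ is the right place to use that hypothesis, as in the paper.
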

\begin{proof}
	Our goal will be to lower bound $\frac{\lambda(n,\ell,j)}{\lambda(n,\ell,0)}$ for $j = 2k$ with $k \in \Natural$. Using Corollary~\ref{cor:lambdaDiff} we have
	\begin{align}\label{eq:firstEqLambda}
	\frac{\lambda(n,\ell,j)}{\lambda(n,\ell,0)} &= \frac{\gam{\ell + 1}\gam{\ell + \frac{n}{2}}}{\gam{\ell + 1 - k} \gam{\ell + \frac{n}{2} + k}} \ .
	\end{align}
	Using the fact that
	\begin{align}
		\frac{\gam{\ell + 1}}{\gam{\ell + \frac{n}{2} + k}} &= \frac{\ell (\ell-1)\ldots (\ell + 1 - k)\gam{\ell + 1 - k}}
		{\left(\ell + \frac{n}{2} + k - 1\right)\ldots\left(\ell + \frac{n}{2}\right) \gam{\ell + \frac{n}{2}}}
	\end{align}
	we can rewrite~\eqref{eq:firstEqLambda} as
	\begin{align}
		&\underbrace{\left(\frac{\ell + 1 - k}{\ell + \frac{n}{2}}\right)\left(\frac{\ell+2-k}{\ell + \frac{n}{2} + 1}\right)\ldots\left(\frac{\ell}{\ell + \frac{n}{2} + k - 1}\right)}_{ k \mbox{ factors}}
		\geq \left(\frac{\ell + 1 - k}{\ell + \frac{n}{2}}\right)^k\label{eq:lhs}\\
		&=\left(\frac{\ell + \frac{n}{2} - (k - 1 + \frac{n}{2})}{\ell + \frac{n}{2}}\right)^k
		\geq 1 - \frac{k(k + \frac{n}{2} - 1)}{\ell + \frac{n}{2}}
		=1 - \frac{j\left(\frac{j+n}{2} - 1\right)}{2\ell + n}\ ,
	\end{align}
	where the first inequality follows from the fact that for $n > 0$ and $k \geq 1$ we have $\ell + 1 - k \leq \ell + n/2$
	and hence $(\ell + 1 - k)/(\ell + n/2)$ is the smallest term on the left hand side of~\eqref{eq:lhs}. This can be seen 
	by noting that for any $x,y \geq 0$ the fact that $x \leq y$ implies $(x/y) (y+1)/(x+1) = (xy+x)/(xy+y)\leq 1$ and hence $x/y \leq (x+1)/(y+1)$.
\end{proof}

\begin{corollary}\label{cor:epsilonBound}
Let $\ell, j, n \in \Natural$, $n \geq 3$, and let 
	$2\ell \geq j \geq 2$ be even. Then 
	\begin{align}
		\frac{\lambda(n,\ell,0)}{\lambda(n,\ell,j)} -1\leq \frac{2j\left(\frac{j+n}{2} - 1\right)}{2\ell + n}=\epsilon(j,\ell,n)\ .
	\end{align}
	whenever $\epsilon(j,\ell,n)\leq1$.
\end{corollary}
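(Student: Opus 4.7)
The plan is to deduce this corollary directly from Lemma~\ref{lem:epsilonBound} by a short algebraic manipulation, treating the ratio $\lambda(n,\ell,0)/\lambda(n,\ell,j)$ as the reciprocal of the quantity bounded in that lemma.

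First, I would set the shorthand $\delta := j(\frac{j+n}{2}-1)/(2\ell + n)$, so that $\epsilon(j,\ell,n) = 2\delta$ and the hypothesis $\epsilon(j,\ell,n) \leq 1$ translates to $\delta \leq 1/2$. Lemma~\ref{lem:epsilonBound} gives
\begin{equation*}
\frac{\lambda(n,\ell,j)}{\lambda(n,\ell,0)} \;\geq\; 1 - \delta,
\end{equation*}
and since $\delta \leq 1/2 < 1$ the right-hand side is strictly positive, so taking reciprocals is legitimate and preserves the inequality direction:
\begin{equation*}
\frac{\lambda(n,\ell,0)}{\lambda(n,\ell,j)} \;\leq\; \frac{1}{1-\delta}.
\end{equation*}

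Subtracting $1$ from both sides yields
\begin{equation*}
\frac{\lambda(n,\ell,0)}{\lambda(n,\ell,j)} - 1 \;\leq\; \frac{\delta}{1-\delta}.
\end{equation*}
The final step is to use the hypothesis $\delta \leq 1/2$, which gives $1-\delta \geq 1/2$ and hence $\delta/(1-\delta) \leq 2\delta = \epsilon(j,\ell,n)$, completing the argument. The only thing to verify is that $\lambda(n,\ell,j) > 0$ under the stated hypotheses so that the ratio is well-defined; this is clear from the explicit formula in Lemma~\ref{lem:simplifyLambda}, since $j$ is even and $j \leq 2\ell$, so all Gamma factors are evaluated at positive arguments. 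There is no substantive obstacle here: the entire content is the elementary observation that $(1-\delta)^{-1} - 1 \leq 2\delta$ on the interval $\delta \in [0,1/2]$, which is precisely where the condition $\epsilon(j,\ell,n) \leq 1$ places us.
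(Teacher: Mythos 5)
Your proof is correct and takes essentially the same route as the paper's: apply Lemma~\ref{lem:epsilonBound}, invert, and use the elementary bound $t/(1-t)\le 2t$ on $[0,1/2]$. One minor improvement is that you work with the manifestly nonnegative bound $\delta$ rather than the actual gap $\alpha=1-\lambda(n,\ell,j)/\lambda(n,\ell,0)$, which lets you skip the paper's extra appeal to Lemma~\ref{lem:lambdaisincreasing} to ensure $\alpha\ge 0$.
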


\begin{proof}
Let $\alpha = 1- \lambda(n,\ell,j)/\lambda(n,\ell,0)$. Then Lemma~\ref{lem:epsilonBound} implies $\alpha\leq \epsilon(j,\ell,n)/2$. This gives us
\begin{align} \label{eq:newbound}
		\frac{\lambda(n,\ell,0)}{\lambda(n,\ell,j)} -1= \frac{1}{1-\alpha}-1\leq 2\alpha\leq \epsilon(j,\ell,n) \ .
	\end{align}
	Which is the required result. While the second inequality in (\ref{eq:newbound}) is just the result of Lemma~\ref{lem:epsilonBound}, the first inequality follows from the following small calculation
	\begin{eqnarray}
1+2\alpha -\frac{1}{1-\alpha} = \frac{(1+2\alpha)(1-\alpha)-1}{1-\alpha}= \frac{\alpha-2\alpha^2}{1-\alpha} \geq 0.
\end{eqnarray}
where the inequality holds when $\alpha\leq 1/2$ which holds due to Lemma~\ref{lem:epsilonBound} and the assumption that $\epsilon(j,\ell,n)\leq1$. We also require $\alpha\geq 0$ which follows from Lemma  \ref{lem:lambdaisincreasing} below.
\end{proof}

Let us now establish some further properties of the function $\lambda(n,\ell,j)$. 
\begin{lemma}\label{lem:lambdaisincreasing}
	Let $\ell, j, n \in \Natural$, and $n \geq 3$. Then for all even $j$ with
	$2\ell \geq j \geq 2$ the function $1/\lambda(n,\ell,j)$ is increasing in $j$. 
\end{lemma}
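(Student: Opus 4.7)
The plan is to prove monotonicity by computing the exact ratio $\lambda(n,\ell,j+2)/\lambda(n,\ell,j)$ for successive even values of $j$, using the closed form from Lemma~\ref{lem:simplifyLambda}. Showing that this ratio lies strictly in $(0,1)$ on the stated range will give that $\lambda(n,\ell,j)$ is strictly decreasing in (even) $j$, equivalently $1/\lambda(n,\ell,j)$ is strictly increasing.

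First I would invoke Lemma~\ref{lem:simplifyLambda}, which gives
\begin{equation*}
\lambda(n,\ell,j) = \frac{\sqrt{\pi}}{2^{2\ell}}\,\frac{\gam{\tfrac{n-1}{2}}\,\gam{2\ell+1}}{\gam{\ell + 1 - \tfrac{j}{2}}\,\gam{\ell + \tfrac{n+j}{2}}}
\end{equation*}
for even $j$ with $0 \le j \le 2\ell$. Since the prefactor $\tfrac{\sqrt{\pi}}{2^{2\ell}}\gam{\tfrac{n-1}{2}}\gam{2\ell+1}$ is independent of $j$, only the denominator $\gam{\ell + 1 - j/2}\gam{\ell + (n+j)/2}$ matters for the $j$-dependence.

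Second, I would form the ratio for consecutive even values $j$ and $j+2$ (both in the allowed range, so $j+2 \le 2\ell$), using the functional equation $\gam{z+1} = z\,\gam{z}$:
\begin{equation*}
\frac{\lambda(n,\ell,j+2)}{\lambda(n,\ell,j)}
= \frac{\gam{\ell + 1 - \tfrac{j}{2}}\,\gam{\ell + \tfrac{n+j}{2}}}{\gam{\ell - \tfrac{j}{2}}\,\gam{\ell + \tfrac{n+j}{2}+1}}
= \frac{\ell - \tfrac{j}{2}}{\ell + \tfrac{n+j}{2}}.
\end{equation*}
Under the hypotheses $n \ge 3$ and $j+2 \le 2\ell$ (so $\ell - j/2 \ge 1 > 0$), the numerator is strictly positive and strictly smaller than the denominator (the denominator exceeds the numerator by $(n+2j)/2 > 0$). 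Hence the ratio lies in $(0,1)$, so $\lambda(n,\ell,j+2) < \lambda(n,\ell,j)$, and consequently $1/\lambda(n,\ell,j+2) > 1/\lambda(n,\ell,j)$.

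Since the two gamma factors in the denominator are positive on the range in question, $\lambda(n,\ell,j) > 0$ throughout (also ensuring that taking reciprocals preserves the inequality). Iterating the single-step comparison along the even integers from $j=2$ up to $j=2\ell$ yields the monotonicity for all pairs in the stated range. There is no real obstacle here; the only care needed is to confirm positivity of $\lambda(n,\ell,j)$ and to check that the endpoint $j=2\ell$ is handled correctly by restricting the step to $j+2 \le 2\ell$ (the statement of the lemma does not ask us to compare beyond $j=2\ell$).
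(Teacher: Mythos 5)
Your proof is correct and follows essentially the same route as the paper: both use the closed form from Lemma~\ref{lem:simplifyLambda}, reduce to comparing the $j$-dependent Gamma-factor in the denominator at consecutive even values $j$ and $j+2$ via the functional equation $\Gamma(z+1)=z\,\Gamma(z)$, and arrive at the same inequality $\ell - j/2 < \ell + (n+j)/2$. Your explicit attention to positivity and to restricting the step to $j+2\le 2\ell$ (so that $\Gamma(\ell-j/2)$ is well defined) is slightly more careful bookkeeping than the paper spells out, but the argument is the same.
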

\begin{proof}
By~\eqref{eq:simplifiedLambda} we have
\begin{align}\label{eq:inverseLambda}
\frac{1}{\lambda(n,\ell,j)} = \frac{2^{2\ell}}{\sqrt{\pi}} \frac{\gam{\ell + 1 - \frac{j}{2}} \gam{\ell + \frac{n+j}{2}}}
{\gam{\frac{n-1}{2}} \gam{2\ell + 1}}\ .
\end{align}
Hence it suffices to show that $\gam{\ell + 1 - \frac{j}{2}} \gam{\ell + \frac{n+j}{2}}$ is increasing in $j$. 
Since $j = 2k$ is even, this is equivalent to showing that
\begin{align}\label{eq:oneFrac}
\frac{\gam{\ell + 1 - k}}{\gam{\ell - k}} \leq \frac{\gam{\ell + \frac{n}{2} + k + 1}}{\gam{\ell + \frac{n}{2} + k}}\ .
\end{align}
Note that $\gam{\ell + 1 - k}  = (\ell - k)\gam{\ell - k}$ and $\gam{\ell + n/2 + k + 1} = (\ell + n/2 + k)\gam{\ell + n/2 + k}$, hence~\eqref{eq:oneFrac}
is equivalent to 
\begin{align}
\ell - k  \leq \ell + \frac{n}{2} + k\ ,
\end{align}
which is our claim.
\end{proof}

We will also need to establish that the quantity $\|\cdot\|_{F1}$ does indeed form a norm for maximally symmetric matrices. 
\begin{claim}\label{claim:F1isnorm}
	The quantity $\|A\|_{F1}$ defines a norm on the space of maximally symmetric matrices $A \in \maxsymEll$.
\end{claim}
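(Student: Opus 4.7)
The four norm axioms to check are non-negativity, definiteness, absolute homogeneity, and the triangle inequality. Three of them are essentially automatic from properties of suprema together with two structural observations: the test set $\{F : \|F\|_\infty \leq 1\}$ is symmetric (closed under $F \mapsto -F$) and the objective $F \mapsto \tr(Z_F A)$ is linear in $F$ (because the map $F \mapsto Z_F$ is linear by the construction in Section~\ref{sec:realpoly}). Non-negativity comes from feasibility of $F=0$ with objective $0$; absolute homogeneity follows since $\sup_F \alpha\tr(Z_F A) = |\alpha|\sup_F \tr(Z_F A)$ for any scalar $\alpha$, using symmetry of the test set to handle negative $\alpha$; and the triangle inequality is the usual $\sup_F(\tr(Z_F A)+\tr(Z_F B))\leq \sup_F\tr(Z_F A)+\sup_G\tr(Z_G B)$.

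The substantive step is definiteness. The plan is to pick $F$ so that $Z_F$ equals $A$ itself, turning $\tr(Z_F A)$ into $\tr(A^2)$. Concretely, set
\begin{equation*}
F(x) := Q_A(x) = \bra{x}^{\otimes \ell} A \ket{x}^{\otimes \ell},
\end{equation*}
which is a homogeneous polynomial of degree $2\ell$. Since the mapping from homogeneous polynomials of degree $2\ell$ to maximally symmetric matrices in $\maxsymEll$ is one-to-one (Section~\ref{sec:realpoly}) and $A$ already represents $Q_A$, we conclude $Z_{Q_A}=A$. Therefore, provided $Q_A \neq 0$, the choice $F = Q_A/\|Q_A\|_\infty$ is feasible and gives
\begin{equation*}
\|A\|_{F1} \;\geq\; \tr(Z_F A) \;=\; \frac{\tr(A^2)}{\|Q_A\|_\infty}.
\end{equation*}

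Now I invoke the fact, recorded in Section~\ref{sec:maxSym}, that every maximally symmetric matrix satisfies $A^T = A$, so $A^2$ is positive semidefinite and $\tr(A^2) = \|A\|_F^2$ vanishes only when $A = 0$. Finally, the edge case $Q_A \equiv 0$ is handled by the same uniqueness statement: $Z_{Q_A} = Z_0 = 0$, so $Q_A = 0$ forces $A = 0$. Combining, $A \neq 0$ implies $Q_A \neq 0$ and hence $\|A\|_{F1} > 0$, establishing definiteness.

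I do not anticipate any real obstacle; the whole content of the claim is the recognition that one is allowed to test $A$ against itself by taking $F = Q_A$, which is legitimate precisely because the maximally symmetric subspace is in bijection with homogeneous polynomials of the matching degree via the $Q$-representation. Without the maximal-symmetry hypothesis this bijection would fail and the definiteness argument would break down, matching the observations made earlier in the paper about why general symmetric matrices behave differently.
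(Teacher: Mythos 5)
Your proof is correct, but for the definiteness step you take a genuinely different route from the paper. The paper writes $A$ in its $P$-representation (Lemma~\ref{lem:hasPrepresentation}), restricts to even $j \le 2\ell$ via Lemmas~\ref{lem:PvsQ} and~\ref{lem:simplifyLambda}, and then tests against the spherical harmonics $F = s_{jm}$, using the orthogonality relation of Lemma~\ref{lem:sjmTrace} to conclude $\tr(Z_F A)\neq 0$ whenever some $p^A_{jm}\neq 0$. You instead test $A$ against its own $Q$-representation: since the polynomial-to-maximally-symmetric-matrix correspondence is a bijection, $Z_{Q_A}=A$, so $\tr(Z_F A)=\tr(A^2)/\|Q_A\|_\infty$, and $\tr(A^2)>0$ follows because maximal symmetry forces $A=A^T$ to be real symmetric. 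This is more elementary and self-contained: it avoids the spherical harmonics machinery entirely and makes explicit why the maximal-symmetry hypothesis (uniqueness of the $Q \leftrightarrow Z$ correspondence plus symmetry of $A$) is exactly what definiteness needs. You also spell out nonnegativity and absolute homogeneity via symmetry of the test set, which the paper leaves implicit (the paper states the homogeneity identity with $c$ rather than $|c|$, silently assuming $c\ge 0$). One minor tidiness point: it is worth saying explicitly that the "homogeneous polynomials $F$" in the supremum are of degree $2\ell$ so that $Z_F\in\maxsymEll$ and $\tr(Z_F A)$ is well-typed; your argument implicitly assumes this when identifying $Z_{Q_A}$ with $A$.
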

\begin{proof}
	Clearly, $\|cA\|_{F1} = c \|A\|_{F1}$ for any scalar $c$. To see that the triangle inequality holds note that
	\begin{align}
		\|A + B\|_{F1} &= \sup_F (\tr(Z_FA) + \tr(Z_FB))\\
		&\leq \sup_{F}\tr(Z_FA) + \sup_{F}\tr(Z_FB)
		= \|A\|_{F1} + \|B\|_{F1}\ .
	\end{align}
	To see that $\|A\|_{F1} = 0$ implies that $A = 0$ note that any $A\in \maxsymEll$ has a $P$ representation (Lemma~\ref{lem:hasPrepresentation})
	\begin{align}
		A = \sum_{jm} p^{A}_{jm} \sjm\ .
	\end{align}
	Furthermore, note that due to Lemmas~\ref{lem:PvsQ} and~\ref{lem:simplifyLambda} we can restrict the sum to be over $j \leq 2\ell$ and $j$ is even. 
	However for any $p^{A}_{jm} \neq 0$ choosing $F = s_{jm}(x)$ yields by Lemma~\ref{lem:sjmTrace} $\tr(Z_FA) \neq 0$. 
\end{proof}

\section{Relating optimal solutions of even and odd degree}
We will need the following purely technical lemma when dealing with homogeneous polynomials of odd degree. This will enable us to relate the optmization
problem of optimizing homogeneous polynomials of odd degree to the optimization of homogeneous polynomials of even degree.

\begin{lemma}\label{lem:cBound}
For all $\ell \in \Natural$
\begin{align}
\max_{c \geq 0} \frac{c^{2\ell - 1}}{(1+c^2)^{\ell}} = \frac{(2\ell -1)^{\ell - 1/2}}{\ell^\ell 2^\ell}\ .
\end{align}
\end{lemma}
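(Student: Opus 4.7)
The plan is a standard one-variable calculus maximization. Let $f(c) := c^{2\ell-1}/(1+c^2)^{\ell}$ for $c \geq 0$. First I would dispose of the boundary behavior: $f(0) = 0$ (since $2\ell - 1 \geq 1$), and $f(c) \to 0$ as $c \to \infty$ because the numerator has degree $2\ell - 1$ while the denominator has degree $2\ell$. Since $f$ is continuous and strictly positive for $c > 0$, the maximum is attained at an interior critical point.

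Next I would locate the critical point via the logarithmic derivative. Since $\log f(c) = (2\ell-1)\log c - \ell \log(1+c^2)$, differentiating yields
\begin{equation}
\frac{f'(c)}{f(c)} = \frac{2\ell - 1}{c} - \frac{2\ell c}{1+c^2}.
\end{equation}
Setting this to zero gives $(2\ell - 1)(1+c^2) = 2\ell c^2$, equivalently $c^2 = 2\ell - 1$, so the unique positive critical point is $c_* = \sqrt{2\ell - 1}$.

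Finally I would substitute back. At $c = c_*$ we have $c_*^{2\ell-1} = (2\ell-1)^{\ell - 1/2}$ and $1 + c_*^2 = 2\ell$, so
\begin{equation}
f(c_*) = \frac{(2\ell-1)^{\ell-1/2}}{(2\ell)^\ell} = \frac{(2\ell-1)^{\ell-1/2}}{2^\ell \ell^\ell},
\end{equation}
which matches the claimed value. There is essentially no obstacle here — the only thing to be careful about is the $\ell = 1$ edge case, but even there $c_* = 1$ gives $f(1) = 1/2 = (2\cdot 1 - 1)^{1/2}/(2^1 \cdot 1^1)$, so the formula holds uniformly.
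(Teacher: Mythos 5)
Your proof is correct and follows essentially the same single-variable calculus approach as the paper: locate the critical point $c_* = \sqrt{2\ell-1}$ and evaluate. The only cosmetic differences are that you use the logarithmic derivative (slightly cleaner algebra) and justify the maximum via boundary behavior and continuity rather than the paper's second-derivative test; both are sound.
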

\begin{proof}
Let $g(c,\ell) := c^{2\ell - 1}/(1+c^2)^{\ell}$.
Computing the derivative with respect to $c$ we have
\begin{align}
\frac{d g(c,\ell)}{d c} = - 2\ell \frac{c^{2\ell}}{(1+c^2)^{\ell+1}} + (2\ell - 1)\frac{c^{2\ell-2}}{(1+c^2)^\ell}\ .
\end{align}
A small calculation shows that this derivative vanishes $(=0)$ for $c = 0$ and for $c > 0$ iff
\begin{align}
c = \sqrt{2\ell - 1}\ .
\end{align}
We now show that this is the maximum. Note that for boundary point $c=0$ we have $g(0,\ell) = 0$. 
Computing the second derivative and evaluating it at the point $c = \sqrt{2\ell - 1}$ we obtain
\begin{align}
- \frac{(2\ell - 1)^{\ell - 1/2}}{2^\ell \ell^{\ell+1}}\ ,
\end{align}
which is clearly negative.
It remains to evaluate $g(c,\ell)$ at the maximum point which yields our claim.
\end{proof}

The following lemma will allow us to reduce the case of optimizing homogeneous polynomials of odd
degree over $S^{n-1}$, to the case of optimizing homogeneous polynomials of even degree over the $n+1$ dimensional sphere $S^{n}$.

\begin{lemma}\label{lem:turnIntoEven}
Let $T(x)$ be a homogeneous polynomial of odd degree $2\ell - 1$ in variables $x = (x_1,\ldots,x_n)$, and let $T'(x') = T(x) x_0$ for $x' = (x_0,x_1,\ldots,x_n)$, $x_0 \in \Real$.
Then
\begin{align}
\max_{x' \in S^{n}} T'(x')
 = \gamma(\ell) \max_{x \in S^{n-1}} T(x)\ ,
\end{align}
where $\gamma(\ell) = (2\ell - 1)^{\ell - 1/2}/(\ell^\ell 2^{\ell})$. 
\end{lemma}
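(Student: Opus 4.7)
The plan is to parametrize points of $S^n$ by one ``axial'' coordinate $x_0$ together with a point $\hat{y} \in S^{n-1}$ coming from the normalization of the remaining coordinates, then use homogeneity of $T$ to decouple the maximization on $S^n$ into a maximization of $T$ over $S^{n-1}$ and an independent one-dimensional scalar problem that is exactly what Lemma~\ref{lem:cBound} solves.

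First I will write any $x' = (x_0, y) \in S^n$ with $x_0 \in \Real$ and $y \in \Real^n$ satisfying $x_0^2 + \|y\|_2^2 = 1$. Whenever $y \neq 0$, I set $s := \|y\|_2 \geq 0$ and $\hat{y} := y/s \in S^{n-1}$, so that $x_0^2 + s^2 = 1$. Since $T$ is homogeneous of degree $2\ell-1$, we have $T(y) = s^{2\ell-1} T(\hat{y})$, and therefore
\[
T'(x') = x_0 \, s^{2\ell-1} \, T(\hat{y}).
\]
Because $T$ has odd degree, $T(-\hat{y}) = -T(\hat{y})$, which shows that $\nu := \max_{\hat{y} \in S^{n-1}} T(\hat{y})$ coincides with $\max_{\hat{y}} |T(\hat{y})|$ and is in particular nonnegative.

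Next I will obtain the upper bound by decoupling: from $|T'(x')| \leq |x_0|\, s^{2\ell-1}\, \nu$ it suffices to maximize $|x_0| s^{2\ell-1}$ subject to $x_0^2 + s^2 = 1$ with $s \geq 0$. Substituting $c = s/|x_0|$ (assuming $x_0 \neq 0$; the case $x_0 = 0$ contributes zero) converts this into $c^{2\ell-1}/(1+c^2)^{\ell}$ over $c \geq 0$, whose maximum is exactly $\gamma(\ell)$ by Lemma~\ref{lem:cBound}. Hence $T'(x') \leq \gamma(\ell) \nu$ uniformly on $S^n$.

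Finally I will verify achievability: choose $\hat{y}^\ast \in S^{n-1}$ with $T(\hat{y}^\ast) = \nu$ and choose $(x_0^\ast, s^\ast)$ with both entries positive realizing the scalar maximum from Lemma~\ref{lem:cBound}; then $x'^\ast = (x_0^\ast, s^\ast \hat{y}^\ast) \in S^n$ and all signs align so $T'(x'^\ast) = \gamma(\ell) \nu$. The degenerate case $\nu = 0$ forces $T \equiv 0$ (again by oddness), in which case both sides are zero. There is no real obstacle in this argument: the entire nontrivial content lies in Lemma~\ref{lem:cBound}, and the only small piece of bookkeeping is to check that the sign of $x_0$ can be independently aligned with that of $T(\hat{y})$, which uses precisely the oddness of the degree of $T$.
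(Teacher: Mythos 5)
Your proof is correct and takes essentially the same route as the paper: a polar decomposition $y = s\hat{y}$ reduces the maximization on $S^n$ to the scalar problem $\max_{c \geq 0} c^{2\ell-1}/(1+c^2)^\ell$ handled by Lemma~\ref{lem:cBound}, and your substitution $c = s/|x_0|$ on the sphere is the same change of variables as the paper's trick of dividing by $\|x'\|_2^{2\ell}$ and slicing at $x_0 = 1$. You are somewhat more careful than the paper in flagging that the factoring of the double maximum requires $\max_{\hat{y}\in S^{n-1}} T(\hat{y}) \geq 0$, which follows from the oddness of $T$; the paper uses this fact implicitly.
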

\begin{proof}
Note that $T'(x')$ is a homogeneous polynomial of even degree $2\ell$. Define
\begin{align}
f(x') := \frac{T'(x')}{(\|x'\|_2^2)^\ell}\ .
\end{align}
Note that $\|x'\|_2^2$ is a homogeneous polynomial of degree $2\ell$. Since $T'(x')$ and $\|x'\|_2^2$ are homogeneous of degree $2\ell$, 
we have for all constants $k$
\begin{align}\label{eq:homoEquiv}
f(k x') = f(x')\ .
\end{align}
Observe that this implies that 
\begin{align}
\max_{x' \in \Real^{n+1}} f(x') = 
\max_{x' = (1,x_1,\ldots,x_n) \in \Real^{n+1}} f(x') =   
\max_{x' \in S^n} T'(x')\ ,
\end{align}
as we can choose the constant $k$ to normalize the solution of the l.h.s. to have $x_0 = 1$ or to reside on the hypersphere (where $\|x'\|_2 = 1$) without changing the optimum.
Note the expression in the middle can be rewritten as
\begin{align}\label{eq:maxFixedx0}
\max_{x' = (1,x_1,\ldots,x_n) \in \Real^{n+1}} f(x') = 
\max_{x \in \Real^{n}}\frac{T(x)}{(1 + \|x\|_2^2)^\ell}\ .
\end{align}
Optimizing over $x \in \Real^n$ is equivalent to optimizing over a vector $\hat{x} \in S^{n-1}$, and an optimization over the length of this vector
since we can write any $x$ as $x = c \hat{x}$ for some $\hat{x} \in S^{n-1}$ with $c = \|x\|_2$. 
Using the fact that $T(x)$ is homogeneous, we can hence express~\eqref{eq:maxFixedx0} as
\begin{align}
\max_{\hat{x} \in S^{n-1}} \max_{c \geq 0} \left(\frac{c^{2\ell-1}T(\hat{x})}{(1 + c^2)^\ell}\right) = 
\max_{\hat{x} \in S^{n-1}} T(\hat{x}) \underbrace{\left(\max_{c \geq 0} \frac{c^{2\ell-1}}{(1 + c^2)^\ell}\right)}_{= \gamma(\ell)}\ .
\end{align}
Our claim now follows by applying Lemma~\ref{lem:cBound} to evaluate $\gamma(\ell)$. 
\end{proof}

\end{document}